\documentclass[11pt,letterpaper]{article}

\def\compilemode{arxiv}%
\makeatletter
\newcommand{\onlymode}[2]{%
	\def\@tempa{#1}%
	\ifx\compilemode\@tempa
		#2%
	\fi
}
\makeatother
\newcommand{\onlyarxiv}[1]{\onlymode{arxiv}{#1}}
\newcommand{\onlysoda}[1]{\onlymode{soda}{#1}\onlymode{work}{#1}}
\newcommand{\onlywork}[1]{\onlymode{work}{#1}}

\usepackage[utf8]{inputenc}

\usepackage[top=2cm,bottom=2cm,left=2cm,right=2cm]{geometry}
\onlywork{\geometry{right=4cm}}
\onlywork{\setlength{\marginparwidth}{4cm}}

\usepackage[T1]{fontenc}
\usepackage{lmodern}
\usepackage{libertine}
\usepackage[colorinlistoftodos,bordercolor=orange,backgroundcolor=orange!20,linecolor=orange,textsize=normalsize]{todonotes}

\usepackage{amsmath}
\usepackage{mathtools}
\usepackage{amssymb}
\usepackage{amsthm}
\usepackage{bbm}
\usepackage{hyperref}
\usepackage{accents}
\usepackage{booktabs}
\usepackage{fixmath}
\usepackage[noadjust,compress]{cite}
\usepackage[capitalise,noabbrev]{cleveref}
\usepackage{enumitem}
\setlist{noitemsep}
\usepackage{comment}

\usepackage{xspace}
\usepackage{tikz}
\usepackage{amsthm}

\usetikzlibrary{fit}
\usetikzlibrary{arrows}
\usetikzlibrary{patterns}
\usetikzlibrary{calc}
\usetikzlibrary{shapes}
\usetikzlibrary{positioning}
\usetikzlibrary{math}
\usetikzlibrary{shapes.symbols}
\usetikzlibrary{decorations.pathreplacing,calligraphy}
\usepackage[scr=boondox,scrscaled=1.05]{mathalfa}
\usepackage[ruled,vlined]{algorithm2e}

\newcommand{\MWIS}{\textsc{MWIS}\xspace}

\renewcommand{\geq}{\geqslant}
\renewcommand{\leq}{\leqslant}
\newcommand{\wei}{\mathsf{w}}
\newcommand{\Oh}{\mathcal{O}}

\newcommand{\cH}{\mathcal{H}}

    \newcommand{\mH}{\mathsf{H}} 
    
\newcommand{\cX}{\mathcal{X}}

\renewcommand{\le}{\leq}
\renewcommand{\ge}{\geq}

\newcommand{\NP}{\textsf{NP}\xspace}

\newcommand{\Lpgt}{\ensuremath{\Xi_{\geq t}}}

\newcommand{\cXi}{\ensuremath{\chi}}
\renewcommand{\epsilon}{\varepsilon}

\usepackage{thmtools}
\usepackage{thm-restate}

\newcommand{\say}[1]{``#1''}

\newenvironment{proofofclaim}{\noindent\textit{Proof of Claim:}}{\hfill$\lrcorner$\medskip}

\newcommand{\N}{{\mathbb N}}

\newcommand\tw{\textsf{tw}}

\usepackage{todonotes}

\newtheorem{theorem}{Theorem}[section]
\newtheorem{lemma}[theorem]{Lemma}
\newtheorem{observation}[theorem]{Observation}
\newtheorem{proposition}[theorem]{Proposition}
\declaretheorem[name=Claim]{claim}
\crefname{claim}{Claim}{Claims}
\Crefname{Claim}{Claim}{Claims}
\newtheorem{corollary}[theorem]{Corollary}
\newtheorem{conjecture}[theorem]{Conjecture}
\theoremstyle{definition}
\newtheorem{definition}[theorem]{Definition}

\newcommand{\ThetaZero}{\Theta_0}

\newcommand{\sep}{sep}

\newcommand{\BIG}{\mathsf{BIG}}

\newcommand{\EP}{Erd\H{o}s--P\'osa\xspace}

\newcommand{\dlttwo}{\ensuremath{n_{\leq 2}}}
\newcommand{\dthree}{\ensuremath{n_3}}

\newcommand{\pack}{\mathsf{pack}}

\newcommand{\sS}{\mathcal{S}}

\newcommand{\fsim}{f_{\mathrm{Sim}}}
\newcommand{\lsim}{\lambda_{\mathrm{Sim}}}
\newcommand{\ftheta}{f_{\Theta}}

\usepackage{framed}
\usepackage{tabularx}

\newlength{\RoundedBoxWidth}
\newsavebox{\GrayRoundedBox}
\newenvironment{GrayBox}[1]%
   {\setlength{\RoundedBoxWidth}{.93\columnwidth}
    \def\boxheading{#1}
    \begin{lrbox}{\GrayRoundedBox}
       \begin{minipage}{\RoundedBoxWidth}}%
   {   \end{minipage}
    \end{lrbox}
    \begin{center}
    \begin{tikzpicture}%
       \node(Text)[draw=black!20,fill=white,rounded corners,inner sep=2ex,text width=\RoundedBoxWidth]
             {\usebox{\GrayRoundedBox}};
        \coordinate(x) at (current bounding box.north west);
        \node [draw=white,rectangle,inner sep=3pt,anchor=north west,fill=white]
        at ($(x)+(6pt,.75em)$) {\boxheading};
    \end{tikzpicture}
    \end{center}}

\newenvironment{defproblemx}[1]{\noindent\ignorespaces%
                                \FrameSep=6pt%
                                \parindent=0pt%
                \begin{GrayBox}{#1}%
                \begin{tabular*}{\columnwidth}{!{\extracolsep{\fill}}@{\hspace{.1em}} >{\itshape} p{1.5cm} p{0.86\columnwidth} @{}}%
            }{
                \end{tabular*}%
                \end{GrayBox}%
                \ignorespacesafterend
            }

\title{Induced Erd\H{o}s--P\'osa property\\for long holes, long thetas, and beyond}

\onlyarxiv{%
\author{
	Jadwiga Czyżewska\thanks{University of Warsaw, Poland (\textsf{j.czyzewska@mimuw.edu.pl}).	
		Supported by Polish National Science Centre SONATA BIS-12 grant number 2022/46/E/ST6/00143.}
	\and Tomáš Masařík\thanks{University of Warsaw, Poland (\textsf{masarik@mimuw.edu.pl}).
			Supported by the Polish National Science Centre SONATA-17 grant number 2021/43/D/ST6/03312.}
	\and Marcin Pilipczuk\thanks{University of Warsaw, Poland (\textsf{m.pilipczuk@mimuw.edu.pl}).
			Supported by Polish National Science Centre SONATA BIS-12 grant number 2022/46/E/ST6/00143.}
	\and Amadeus Reinald\thanks{University of Warsaw, Poland (\textsf{reinald@mimuw.edu.pl}).
			Supported by Polish National Science Centre SONATA BIS-12 grant number 2022/46/E/ST6/00143.}
	\and Paweł Rzążewski\thanks{Warsaw University of Technology, Poland (\textsf{pawel.rzazewski@pw.edu.pl}). Supported by the National Science Centre grant 2024/54/E/ST6/00094.}}}
\onlysoda{\author{Anonymous}}

\begin{document}
	\begin{titlepage}
	\date{}
	\maketitle
	
\begin{abstract}
    The induced \EP property in graphs relates the maximum packing of pairwise anti-adjacent copies of an object with the minimal number of neighborhoods required to hit all copies.
	In this paper, the objects we consider are long cycles and long thetas, both as induced minors.
	Let $C_{t}$ denote the cycle with $t$ vertices and let $\Theta_{t}$ be the graph consisting of three internally disjoint and anti-adjacent paths, each with $t$ internal vertices, connecting the same pair of distinct vertices.

  We show that for every fixed $t$, both $C_{t}$ and $\Theta_{t}$ have the induced Erd\H{o}s--P\'osa property with respect to the induced minor relation.
	More precisely, for every integer $k$ and a graph $G$, one of the two outcomes occurs:
	\begin{itemize}
    \item $G$ contains $k$ pairwise vertex-disjoint and anti-adjacent copies of $C_{t}$ (resp., $\Theta_{t}$) as induced minors, or
	\item there is some $X \subseteq V(G)$ of size $\mathcal{O}(tk \log k)$ such that the set $N[X]$, consisting of $X$ and its neighbors, hits all $C_{t}$ (resp., all $\Theta_{t}$) induced minors in $G$.
	\end{itemize}
	This resolves in a strong form a special case of a conjecture of Ahn, Gollin, Huynh, and Kwon [SODA 2025].

	From these results we derive that graphs that exclude $k \Theta_{t}$ as an induced minor admit balanced separators consisting of the neighborhood of $\mathcal{O}(tk \log k)$ vertices.
	This in turn resolves a special case of a conjecture of Gartland and Lokshtanov and, combined with known techniques, yields a QPTAS for \textsc{Maximum Weight Independent Set} and a number of its generalizations. 
\end{abstract}

	\thispagestyle{empty}
\end{titlepage}

\tableofcontents
\thispagestyle{empty}
\newpage\setcounter{page}{1}

	\section{Introduction}
	Many classic combinatorial problems can be expressed as \emph{packing} versus \emph{hitting} certain objects in a graph.
The interplay between these two notions is a well established area of research in graph theory~\cite{Menger1927,Gallai1961,DBLP:journals/jct/RobertsonS95}.
The best known example is the case of cycles: we want to relate the size of a largest family, or \emph{packing}, of vertex-disjoint cycles in a graph, with the size of a smallest set of vertices intersecting all cycles (a \emph{feedback vertex set}).
Clearly, the existence of a large cycle packing is evidence that we cannot hit all cycles with few vertices.
However, it is far from obvious whether this is the only structure that forces feedback vertex set to be large.
By a celebrated result of Erd\H{o}s and P\'osa~\cite{DBLP:journals/cjm/ErdosP65}, this is indeed true:
	Every graph $G=(V,E)$ contains either $k$ pairwise disjoint cycles or a set $X \subseteq V$ of size $\Oh(k \log k)$ such that $G-X$ has no cycles.
Furthermore, the function $\Oh(k \log k)$ governing the gap between packing and hitting outcomes turns out to be asymptotically optimal.

This
sparked a flurry of results of similar flavour, now dubbed \emph{\EP properties}, for variations of the initial problem.
Two main variations emerging from the literature consist in altering the \emph{objects} being packed (cycles), and altering the relation imposed on the packing (disjointness).
The interplay between possible weakenings and strengthenings of the two uncovers an intricate boundary of where the \EP property holds. 

If we still ask for a disjoint packing, and wish to generalize the graphs being packed, we can not only consider graphs other than cycles, but also alter their containment relation. 
One of the most general results in this vein, due to Robertson and Seymour~\cite{DBLP:journals/jct/RobertsonS86}, is that planar graphs have the \EP property, with respect to the minor relation.
That is, for every planar graph $H$, there exists a function $f_H : \N \to \N$ such that every graph $G$ either has $k$ vertex-disjoint subgraphs, each containing $H$ as a minor, or a set $X$ of $f_H(k)$ vertices such that $G-X$ is $H$-minor-free.
Furthermore, such a statement is false if $H$ is not planar.

Following the original motivation of Erd\H{o}s and P\'{o}sa, a long-studied line of research studies \EP properties for variants of cycles, in particular, long cycles and holes.
Regarding the former, it is easily derived from~\cite{DBLP:journals/jct/RobertsonS86} that for any $t$, cycles of length at least $t$ satisfy the \EP property.
Then, obtaining the best governing function (asymptotically) was an active effort~\cite{DBLP:journals/jgt/Thomassen88,DBLP:journals/combinatorica/BirmeleBR07}, culminating in an optimal $\Oh(kt + k \log k)$ due to Mousset, Noever, Škorić, and Weissenberger~\cite{MOUSSET201721}.
The case of holes, i.e., induced cycles of length at least~$4$, was investigated by Kim and Kwon~\cite{DBLP:journals/jct/KimK20}, who showed that they do indeed satisfy the \EP property.
On the negative side, they ruled out this possibility for induced cycles of length at least $5$. 

If we now wish to alter the packing relation, there are two directions in which one can go.
On the one hand, there are countless examples of objects that do not satisfy the \EP property, begging for weakened packing notions.
This is notably the case for non-planar minors or parity-constrained cycles, where the \EP property has been recovered by asking for \say{half-integral} packings, see Reed~\cite{reed1999mangoes} and Liu~\cite{liuMinorHalfInt}.
Of particular relevance to our problem is the fact that no variation of the packing relation is known to recover the \EP property for long holes, and it is known that the fractional packing relaxation fails~\cite{DBLP:journals/jct/KimK20}. 
On the other hand, for objects that do satisfy the \EP property, one should look to strengthen the packing relation.
It is natural then to ask not only for disjointness, but also non-adjacency. Formally, an \emph{induced packing} of some object (for any containment relation), is a packing of pairwise disjoint and non-adjacent copies of it.
Then, an \emph{induced \EP property} gives a counterpart to the non-existence of an induced packing.

With such a strong packing notion, what should the corresponding notion of a hitting set be? 
It is easy to see that hitting with individual vertices is usually not enough,%
\footnote{In most cases, blowing up every vertex of a graph into a large independent set or a clique 
does not change the packing number but inflates the hitting set size.}
and the first natural weakening is to hit with closed neighborhoods, i.e., radius-1 balls (which is canonical in the induced realm~\cite{hickingbotham2025induced}).
That is, we should ask for a few vertices $X$, such that their closed neighborhood $N[X]$ hits all copies of cycles.
This question sits among many problems aiming at \textsl{induced} analogues of results known for subgraphs or minors, see e.g.~\cite{KORHONEN2023206,DBLP:phd/us/Gartland23}.
This is in particular exemplified by the conjectured induced analogue of Menger's theorem~\cite{hickingbotham2025induced}, asking for many pairwise non-adjacent paths, or a small set of vertices whose neighborhoods hit all paths.

Regarding the induced \EP property, the following was shown for long cycles.
\begin{theorem}[Ahn, Gollin, Huynh, Kwon~\cite{DBLP:conf/soda/AhnGHK25}]\label{thm:coarseEP}
	Let $k \geq 1,t \geq 3$ be integers.
	Every graph $G=(V,E)$ contains either:
	\begin{itemize}
	\item $k$ pairwise disjoint and anti-adjacent cycles, each of length at least $t$, or
	\item a set $X \subseteq V$ of size $\Oh(t \cdot k \log k)$ such that $G-N[X]$ has no induced cycle of length at least $t$.
	\end{itemize}
\end{theorem}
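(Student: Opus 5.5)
The plan is the classical Erd\H{o}s--P\'osa scheme: induction on $k$, with neighborhoods of vertices playing the role of deleted vertices. Throughout we work with a shortest cycle of length at least $t$ in the current graph, because such cycles interact with the rest of the graph in a controlled way.

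Fix $t$ and argue by induction on $k$; the case $k=1$ is trivial with $X=\emptyset$. Suppose $G$ has an induced cycle of length at least $t$. Pick a cycle $C$ of length at least $t$ that is \emph{shortest} among all such cycles. Then $C$ is induced up to short chords: any chord splits $C$ into two cycles, one of which is still long. This forces every chord to cut off a segment of length less than $t$. Consequently, $|C|$ is at most about $2t$ or $C$ is already essentially a ``$t$-geodesic'' cycle.

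The key local step is the following. Let $Y$ be a set of $\Oh(t)$ vertices spread evenly along $C$ (for instance, every vertex of $C$ if $|C|=\Oh(t)$). Then we want $N[Y]\supseteq V(C)$, and every long cycle that touches $N[C]$ without being hit by $N[Y]$ can be rerouted. If $|C|$ is small, we take $Y=V(C)$, and $G-N[Y]$ has no cycle adjacent to $C$. Applying induction with $k-1$ to $G-N[V(C)]$ then either finds $k-1$ anti-adjacent long cycles there, which together with $C$ give $k$, or a hitting set that we extend by $Y$. This only gives a bound of roughly $\Oh(tk)$ per level but requires $|C|=\Oh(t)$. This is the easy case.

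The main obstacle is when the shortest long cycle is itself very long. Here I would follow the Simonovits-type argument behind the $\Oh(k\log k)$ bound. After iteratively removing $N[\cdot]$ of short long cycles (each costing $\Oh(t)$ vertices and contributing one packed cycle), we reach a graph $G'$ in which every long cycle has length greater than $c\,t\log k$. In this graph we contract paths between ``branch'' vertices: take a maximal collection of vertices of high degree relative to a BFS / induced-path skeleton, and build a cubic-like auxiliary multigraph $H$ in which every cycle corresponds to a long cycle of $G'$. By the girth argument, if $H$ has more than $\Oh(k\log k)$ branch vertices, then it has a cycle of length $\Oh(\log k)$. That cycle lifts to a cycle of $G'$ of length $\Oh(t\log k)$, contradicting the length threshold. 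Hence $H$ has few branch vertices, and neighborhoods of these $\Oh(k\log k)$ branch vertices, together with $\Oh(t)$ vertices per degree-2 thread, hit all long induced cycles.

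The delicate point is making the threads anti-adjacent and induced, so that ``cycle in $H$'' translates to ``long induced cycle'' and hitting with $N[\cdot]$ suffices. To get this, I would choose the skeleton as a union of shortest (hence induced) paths and handle adjacencies between threads by treating them as extra edges of $H$. This is where I expect most of the work to lie, and where the factor $t$ in $\Oh(tk\log k)$ comes from.
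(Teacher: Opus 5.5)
Note first that the paper does not reprove this cited result. It follows from \cref{thm:EP-cycles-algo}, since induced cycles are cycles, so the relevant comparison is with the proof in Section~\ref{sec:cycles}.

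Your easy case is fine. Removing $N[V(C)]$ for a shortest long cycle of length at most $c\,t\log k$ costs $\Oh(t\log k)$ per packed cycle rather than $\Oh(t)$, but the total stays $\Oh(tk\log k)$. The main case, however, is where the whole difficulty lies, and its one concrete step is false. A cubic multigraph on $n$ vertices is only guaranteed a cycle of length $\Oh(\log n)$, not $\Oh(\log k)$. Cubic graphs of girth larger than $k$ exist (Erd\H{o}s--Sachs), have far more than $k\log k$ vertices, and contain no cycle of length $\Oh(\log k)$. So many branch vertices cannot be turned into a contradiction with your length threshold. What you actually get, via \cref{thm:simonovits}, is $k$ vertex-disjoint cycles in $H$, and you must then lift them to $k$ pairwise \emph{anti-adjacent} long cycles of $G$. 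Your plan gives no mechanism for this lifting; indeed $H$ itself is never precisely defined. The lifting requires the threads to be induced paths with at least $t$ internal vertices, pairwise anti-adjacent except at designated ends. ``Treating adjacencies between threads as extra edges of $H$'' breaks both subcubicity and the correspondence between cycles of $H$ and long cycles of $G$, since such an edge can close a short cycle. The paper gets exactly the needed structure from short $t$-models. It grows the model only by \emph{shortest} ears of distance at least $t+3$, and keeps a guard set $R$ containing all vertex bags and the length-$(t+3)$ end segments of every edge bag. It then shows (\cref{lem:model-add-ear,lem:short-model-preserved}) that the result is again a $t$-model in which every vertex outside $N[R]$ sees only a $3$-vertex subpath of one edge bag, and every far ear is long. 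The $t$-model property is what makes disjoint cycles of $H$ project to anti-adjacent long holes (\cref{cor:cycles-from-model}).

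The hitting side is missing too. Even if your skeleton were maximal, long induced cycles can sit in components of $G-N[R]$ that attach to a single thread, or to nothing at all, with arbitrarily complicated structure inside. Neighborhoods of the branch vertices plus $\Oh(t)$ vertices per thread cannot hit them. The preliminary removal of short long cycles does not help, because those components simply inherit the threshold. One has to recurse into the components of $G-N[R]$ that contain a long hole and pay for $R$ out of the packing. The paper does this as follows:
\begin{itemize}
\item By maximality, each such component touches at most one edge bag (\cref{lem:max-model-no-inter-ear}), hence kills at most one cycle of the Simonovits packing in $H$.
\item If more than half of that packing is killed, there are many such components, and the convexity of $k \mapsto k\log(2k)$ pays for $|R| = \Oh(t\ell)$.
\item The single configuration in which this charging makes no progress, namely exactly one such component touching an edge bag, is eliminated by \cref{lem:split-one-component-from-model} at the cost of at most $t+5$ extra guards.
\end{itemize}
Without analogues of these ingredients (shortness-preserving ear additions, recursion with sacrificed cycles and convexity charging, and the corner-case split), your proposal does not yet prove the statement.
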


We emphasize that the cycles found in \cref{thm:coarseEP} are non-necessarily induced. 
This leaves the following question open:
\emph{
Do long induced cycles have the induced \EP property?
}

This question can be conveniently stated in terms of \emph{induced minors}.
A graph $H$ is an induced minor of a graph $G$, if it can be obtained from $G$ by deleting vertices and contracting edges.
We say that $G$ is \emph{$H$-induced-minor-free} if it does not contain $H$ as an induced minor.

With this terminology, we can formulate the following conjecture, of an induced minor analogue to the aforementioned result of Robertson and Seymour for minors~\cite{DBLP:journals/jct/RobertsonS86}.
(For a connected graph $H$, we denote by $kH$ the graph consisting of $k$ disjoint copies of $H$.)

\begin{restatable}{conjecture}{conjEP}
	\label{conj:EP-planar}
	For every planar graph $H$, there exists a function $f: \N \to \N$ such that
	every graph $G=(V,E)$ contains either:
	\begin{itemize}
	\item $k H$ as an induced minor, or
	\item a set $X \subseteq V$ of size at most $f(k)$ such that $G-N[X]$ is $H$-induced-minor-free.
	\end{itemize}
\end{restatable}
\noindent
\cref{conj:EP-planar} is a strengthening of a conjecture of Ahn, Gollin, Huynh, Kwon~\cite[Conjecture 8.5]{DBLP:conf/soda/AhnGHK25} whose statement only requires that $G$ becomes $H$-induced-minor-free after deleting $f(k)$ balls of some constant radius.

Dujmović, Joret, Micek, and Morin~\cite{dujmovic2024erd}
recently answered a question of Georgakopoulos and Papasoglu~\cite{DBLP:journals/combinatorica/GeorgakopoulosP25}
and settled a coarse analog of \cref{conj:EP-planar} for cycles, 
establishing the \EP property for packing cycles at pairwise distance greater than $d$
versus hitting cycles with radius-$19d$ balls. 
Note that the $d=1$ case is exactly the induced packing case.


\medskip

With this background in mind, we proceed to present our results.

\paragraph{Induced \EP for long holes.} 
As the first result, we confirm \cref{conj:EP-planar} for the case where $H$ is any cycle, answering the aforementioned question about the induced \EP property for long induced cycles.
Furthermore, our proof yields a polynomial-time algorithm that returns one of the two outcomes in the statement of \cref{conj:EP-planar}.

\begin{restatable}[Algorithmic induced Erd\H{o}s--P\'osa for long holes]{theorem}{EPCyclesAlgo}\label{thm:EP-cycles-algo}
	There exists an algorithm that, given a graph $G$ and an integer $t$, in time $|V(G)|^{\Oh(t)}$, outputs 
	an integer $k$ and the following objects:
	\begin{itemize}[itemsep=0px]
		\item a family of $k$ vertex-disjoint anti-adjacent induced cycles, each of length at least $t$ (i.e., an induced minor model of $kC_t$ in $G$); and
		\item a set $X \subseteq V(G)$ of size $\Oh(tk\log k)$ such that $G - N[X]$ does not contain any induced cycle of length at least $t$ (i.e., an induced minor model of $C_t$).
	\end{itemize}
\end{restatable}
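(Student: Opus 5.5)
We plan to mimic the classical Erd\H{o}s--P\'osa argument, with closed neighborhoods playing the role of vertex deletion, and use induction on $k$. The algorithm maintains a current graph $G' = G - N[X']$ and a current packing $\mathcal{C}$ of pairwise anti-adjacent long holes. Long holes can be detected in time $|V(G)|^{\Oh(t)}$. Guess a path on $t$ vertices of the hole: $t-2$ consecutive vertices, plus the two endpoints of the remaining arc. Then test whether the two endpoints are joined by an induced path in $G$ minus the closed neighborhood of the interior of the guessed path, minus the neighbors of its ends other than their hole-neighbors. A shortest such path works. This yields, in polynomial time for fixed $t$, a \emph{shortest-possible} long hole in a suitable sense; that is, a hole $C$ of length at least $t$ that is minimal, with every long hole intersecting the neighborhood structure of $C$ in a controlled way.

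\textbf{Step 1 (reduction to a sparse/cubic-like structure).} Take a maximal anti-adjacent packing $\mathcal{C}=\{C_1,\dots,C_k\}$ of long holes. Every other long hole $D$ must touch $N[\bigcup C_i]$. We would like to replace each $C_i$ by few vertices. We choose each $C_i$ to be of minimum length among long holes in the remaining graph, found greedily as in the Erd\H{o}s--P\'osa proof. Then we show that if a long hole $D$ meets $N[C_i]$ but avoids $N[Y_i]$ for a set $Y_i\subseteq V(C_i)$ of $\Oh(t)$ ``anchor'' vertices, then $D$ together with $C_i$ contains a shorter long hole, or $D$ attaches to $C_i$ in many places. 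The $\Oh(t)$ anchors come from splitting $C_i$ into arcs of length about $t$.

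\textbf{Step 2 (building the auxiliary multigraph).} Contract each remaining long hole's attachments into an auxiliary graph $H$. The vertices of $H$ are the arcs of the $C_i$'s (there are $\Oh(k)$ relevant ones after the anchors are removed). The edges of $H$ are minimal induced paths between arcs that avoid the neighborhoods of the anchors. A long hole in the residual graph then corresponds to a cycle in $H$, and an anti-adjacent packing of long holes corresponds to a vertex-disjoint packing of cycles in $H$ whose realizations are far apart. Applying the classical Erd\H{o}s--P\'osa theorem to $H$ (with its $\Oh(k\log k)$ bound) gives either $k$ disjoint cycles in $H$ or a feedback set of size $\Oh(k\log k)$. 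Each vertex of $H$ is an arc of length $\Oh(t)$, so the feedback set lifts to $\Oh(tk\log k)$ vertices $X$ with $N[X]$ hitting everything. In the packing case, we lift the cycles to long holes. Taking shortest connecting paths keeps the lifted holes induced and anti-adjacent, and their length is at least $t$ since each passes through or along an arc.

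The \textbf{main obstacle} is Step 2's lifting in the packing outcome. Disjoint cycles of $H$ must yield \emph{anti-adjacent induced} long cycles in $G$, and adjacency between different connecting paths is not controlled by disjointness in $H$. We would first try to handle this by choosing the connecting paths shortest and iteratively, and by adding an extra $N[\cdot]$ of $\Oh(t)$ vertices around each branch point to the hitting side whenever two paths are adjacent. This should force a charging argument showing that each conflict costs $\Oh(t)$ hitting vertices per packed object, keeping the total $\Oh(tk\log k)$. Finally, the output integer $k$ is the size of the last packing found, and the set $X$ is the one accumulated when no further long hole exists in $G-N[X]$.
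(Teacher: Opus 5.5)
There is a genuine gap, and it is exactly the step you flag as the main obstacle. That step carries essentially all of the difficulty.

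In Step~2 the edges of $H$ are realized by arbitrary minimal induced paths of $G$. Realizations of different edges may therefore share vertices or be adjacent, so disjointness in $H$ says nothing about anti-adjacency in $G$. Concretely, take a long hole $C$ and a vertex $w$ joined to $m$ distinct vertices of $C$ by induced paths that are pairwise anti-adjacent except at $w$, touch $C$ only at their ends, and are longer than $|V(C)|$. Your greedy packing is $\{C\}$, and any two long holes of $G$ touch, so the packing number is $1$. Yet your $H$ contains, on the arcs carrying the attachment points, a clique with $\Omega(m)$ vertices (one edge through $w$ per pair). Hence $H$ has $\Omega(m)$ vertex-disjoint cycles, all realized through $w$.

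So the packing outcome of Erd\H{o}s--P\'osa on $H$ does not lift. The hitting outcome is bounded by the packing number of $H$, not by any packing you can output. Your proposed repair (adding $N[\cdot]$ of $\Oh(t)$ vertices per conflict to $X$ and charging it to a packed hole) is not an argument: nothing bounds the number of conflicts by the number of holes you eventually output.

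The preparatory claims are also unsupported:
\begin{itemize}
\item Cutting $C_i$ into arcs of length about $t$ gives $|V(C_i)|/t$ arcs. That is unbounded, not $\Oh(t)$ anchors or $\Oh(k)$ relevant arcs.
\item The correspondence ``long hole $\leftrightarrow$ cycle of $H$'' needs $H$ to contain essentially all connecting paths, which is precisely what breaks the lift.
\item Step~1 leaves the alternative ``$D$ attaches to $C_i$ in many places'' unresolved, although that is the typical case.
\end{itemize}

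The paper avoids this by building the auxiliary multigraph so that non-touching holds by construction. It starts from a shortest long hole (\cref{lem:init-cycle}; an induced $C_t$ is handled by recursing on $G-N[C]$). It then grows a short $t$-model $\mH=(H,\eta,R)$ by repeatedly adding a shortest ear of distance at least $t+3$. An ear is an induced path whose interior avoids $N[V(\mH)]$ and whose ends avoid $N[R]$, and $R$ contains the vertex bags and the $(t+3)$-prefixes of all edge bags. Thus $\mH$ is always an induced minor model of a $t$-subdivision of a subcubic $H$, and vertex-disjoint cycles of $H$ are automatically anti-adjacent long holes (\cref{cor:cycles-from-model}). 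In your example, at most one ear can pass through $w$, and $w$ becomes a guard as soon as another ear attaches next to it.

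Each ear adds two degree-$3$ vertices and $\Oh(t)$ guards. Simonovits' theorem (\cref{thm:simonovits}), rather than the feedback-vertex-set form of Erd\H{o}s--P\'osa, converts the $2\ell$ degree-$3$ vertices into $k^\circ$ disjoint cycles with $|R|=\Oh(tk^\circ\log k^\circ)$. The hitting set is never a lifted feedback vertex set. Long holes not captured by the maximal model lie in components of $G-N[R]$, each meeting at most one set $N[\eta(e)]$ (\cref{lem:max-model-no-inter-ear}). The algorithm recurses on these components, discarding at most one cycle of $H$ per component, and pays for $R$ by convexity of $k\mapsto k\log(2k)$. \cref{lem:split-one-component-from-model} removes the bad case of a single such component that touches the model.
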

\noindent We remark that a slight modification
of the arguments of~\cite{DBLP:conf/soda/AhnGHK25}
(take a $2$-subdivision of the lower bound construction for non-induced $C_{\geq t}$ of~\cite{MOUSSET201721})
yields a $\Omega(tk + k \log k)$ lower bound for~\Cref{thm:EP-cycles-algo}.
This leaves a small room for possible improvement.

\paragraph{Induced \EP for Long thetas and three-path-configurations.}

Actually, \cref{thm:EP-cycles-algo} can be derived from an analogous statement for another type of objects, namely, \emph{long three-path-configurations}.
Let us introduce some notation first; see \cref{fig:lpcs} for an illustration.

By $C_{\geq t}$ we denote a cycle of length at least $t$.
A \emph{theta}, denoted by $\Theta$, is any (multi)graph obtained from the two-vertex multigraph $\Theta_0$ with three parallel edges by a series of edge subdivisions. 
A $\Theta_t$ is a $\Theta$ where each path
is of length exactly $t+1$ (so that they have exactly $t$ internal vertices each).
A $\Theta_{\geq t}$ is defined analogously with ``exactly'' replaced by ``at least.''
Note that for every $t \geq 1$, every $\Theta_{\geq t}$ contains $\Theta_t$ as an induced minor.

A \emph{$t$-long three path configuration ($t$-long 3PC)}, denoted $\Xi_{\geq t}$, is a set of three induced paths, each of length 
more than $t$ (i.e., having at least $t$ internal vertices), that are pairwise disjoint and anti-adjacent, except for the following:
one can specify which endpoint is a start and which is an end in such a manner, that all three starts
are either equal or pairwise distinct and adjacent, and the same holds for all three ends.

\begin{figure}[t]
\centering
\includegraphics[scale=0.7]{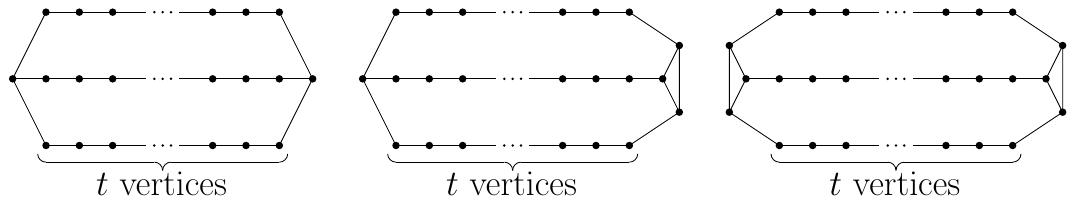}
\caption{Three minimal (with respect to the number of degree-2 vertices) $\Xi_{\geq t}$s.
The first one is $\Theta_t$.}
\label{fig:lpcs}
\end{figure}

We have the following lemma saying that $\Xi_{\geq t}$ is \emph{almost} an induced minor model of a $\Theta_t$. A detailed proof for it can be found in \cref{sec:theta-xi-section}.

\begin{restatable}{lemma}{obsThetaXi}
	\label{lem:theta-xi}
	Let $t$ be a positive integer.
	\begin{enumerate}[itemsep=0pt]
		\item Every induced minor model of a $\Theta_{\geq t+2}$ contains a $\Xi_{\geq t}$.
		\item Every $\Xi_{\geq t}$ contains $\Theta_{\geq t}$ as an induced minor.
	\end{enumerate}	
	Furthermore, given the assumed structure, the claimed structure can be found in polynomial time.
\end{restatable}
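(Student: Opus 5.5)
For part (2) we contract directly. Let $P_1,P_2,P_3$ be the three paths of a $\Xi_{\geq t}$, with starts $s_1,s_2,s_3$ and ends $e_1,e_2,e_3$. If the starts are pairwise distinct, they form a triangle. In that case we contract the edges $s_1s_2$ and $s_2s_3$ into a single vertex $s$, and we do the same at the end side. The result is three paths between $s$ and $e$. Their interiors are exactly the interiors of the $P_i$, which by definition are pairwise disjoint and anti-adjacent, and each contains at least $t$ vertices. Each path remains induced because no chords were created: the only new adjacencies are from $s$ (resp.\ $e$), and inside $P_i$ these are just the neighbors of $s_i$ (resp.\ $e_i$). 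Hence we obtain a $\Theta_{\geq t}$ as an induced subgraph of a contraction of $\Xi_{\geq t}$. If the starts coincide, no contraction is needed on that side. All of these steps are clearly polynomial.

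For part (1), take an induced minor model of $\Theta_{\geq t+2}$ with branch sets $\{B_v\}$, and let $a,b$ be the two branching vertices. Each subdivided path $Q_i$ ($i=1,2,3$) has at least $t+2$ internal vertices. Along $Q_i$, consecutive branch sets are adjacent and non-consecutive ones are anti-adjacent, so inside $G$ we can pick an induced path $R_i$ running through the branch sets of the interior vertices of $Q_i$. We let $R_i$ start at a vertex of the first interior branch set adjacent to $B_a$ and end at a vertex of the last interior branch set adjacent to $B_b$; this is a shortest path inside $\bigcup_{v\in \mathrm{int}(Q_i)} B_v$. Its interior meets at least $t$ of the interior branch sets, which secures the length requirement. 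The three $R_i$ are pairwise disjoint and anti-adjacent, since all interior branch sets of distinct $Q_i$ are anti-adjacent in the model. We spend the two extra vertices (from $t+2$ versus $t$) to keep the portions of $R_i$ near $B_a$ and $B_b$ away from each other, so that only the terminal pieces interact.

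The main obstacle is the junction at $B_a$ (and symmetrically at $B_b$). Let $x_1,x_2,x_3$ be the attachment vertices of the $R_i$ adjacent to $B_a$. We need either a common start or three pairwise adjacent starts. Inside the connected set $B_a\cup\{x_1,x_2,x_3\}$ we take a minimal connected subgraph containing $x_1,x_2,x_3$, whose interior lies in $B_a$. By minimality this subgraph, taken as an induced subgraph, is one of two things:
\begin{itemize}
\item a subdivided claw, whose center becomes a common start after extending the three paths;
\item a triangle with three pendant paths, whose triangle vertices become the three distinct pairwise adjacent starts.
\end{itemize}
This is the standard characterization of minimal connected induced subgraphs containing three prescribed vertices (a pyramid/prism style lemma). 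We then extend each $R_i$ along its pendant leg. We must check that the extended paths stay induced and pairwise anti-adjacent. The legs lie in $B_a$, which is anti-adjacent to all interior branch sets except the first one on each $Q_i$. Minimality of the choice of $x_i$, together with choosing $R_i$ to touch that first set only at $x_i$, prevents chords between a leg and another path. This forces us to use a leg vertex adjacent to the first interior set only at the final attachment; we handle this by re-choosing $x_i$ as the last vertex along the leg with such adjacency, which shortens $R_i$ by at most one branch set. We do the same at $B_b$. The slack of two is consumed here, leaving at least $t$ internal vertices per path, which gives the required $\Xi_{\geq t}$. Everything is computed via shortest paths and minimal connected subgraphs, so it runs in polynomial time.
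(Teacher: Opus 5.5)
Your part (2) is the paper's argument: contract the start and end triangles. For part (1) you take a different route. The paper first applies \cref{lem:minor-models-degree-2} to make every degree-2 branch set a singleton. Then the middle $t$ singletons of each subdivided path already form pairwise anti-adjacent induced paths. Since $X_y$ is anti-adjacent to $X_{a_i}$ for $i\geq 2$, the tripod built in $X_y\cup\{a',b',c'\}$ can meet the rest only at $a',b',c'$, and \cref{lem:make-tripod} handles that. You keep arbitrary branch sets and route shortest paths $R_i$ through them. This strategy is viable, but your junction step has a gap as written, and your length accounting is misattributed.

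Write $B_{v^i_1},\dots,B_{v^i_{m_i}}$ for the interior branch sets of $Q_i$. The danger you name is real: a leg vertex $w\in B_a$ may be adjacent to some $z\in R_j\cap B_{v^j_1}$ with $z\neq x_j$. That gives a chord of the extended $P_j$, or a contact between $P_i$ and $P_j$. Your remedy does not exclude this. The condition ``$R_i$ touches the first interior set only at $x_i$'' cannot hold: the second vertex of $R_i$ is adjacent to $x_i$, and $R_i$ may be forced to use several vertices of $B_{v^i_1}$. Also, ``re-choosing $x_i$ as the last vertex along the leg'' does not parse as a repair, since $x_i$ must lie in $B_{v^i_1}$ and the tripod depends on $x_1,x_2,x_3$.

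What you need is that $x_i$ is the \emph{only} vertex of $R_i$ with a neighbor in $B_a$, and $y_i$ the only one with a neighbor in $B_b$. This is presumably what ``minimality of the choice of $x_i$'' was aiming at, but it must be built into the choice of $R_i$ before any tripod is formed. Take $R_i$ to be a shortest path in $G[\bigcup_k B_{v^i_k}]$ from the \emph{set} $N(B_a)\cap B_{v^i_1}$ to the set $N(B_b)\cap B_{v^i_{m_i}}$. Then leg vertices, which lie in $B_a$, see only $x_1,x_2,x_3$ among the $R_j$, and the anti-adjacency of tripod legs takes care of those.

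Nothing is actually lost at the junction. The slack of two is needed because a tripod leg may have length $0$, so $x_i$ (or $y_i$) may itself be the start (or end) of the $\Xi_{\geq t}$. The guaranteed interior is therefore $R_i\setminus\{x_i,y_i\}$. It meets $B_{v^i_2},\dots,B_{v^i_{m_i-1}}$, so it has at least $m_i-2\geq t$ vertices. If you really dropped a branch set at each end, as your accounting says, this length-$0$ case would leave only $t-2$.

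With this fix your proof is a self-contained alternative: choosing shortest paths between sets acts as a local substitute for the singleton normalization. The paper's route is shorter because that normalization, at the cost of citing \cref{lem:minor-models-degree-2}, makes all these interactions trivial.
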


A $t$-long 3PC is almost like an induced minor model of a $\Theta_{\geq t}$, but there are some subtle differences
in corner cases (see \cref{fig:turtle}).
In particular, it is relatively easy to check if a given graph contains a 
$\Xi_{\geq t}$ in time $n^{\Oh(t)}$ (this is formalized later as \cref{lem:detect-L3PC}), while the complexity status of checking whether a graph contains
$\Theta_{t}$ as an induced minor remains open, with the $t=1$ case solved recently~\cite{DBLP:conf/iwoca/DallardDHMPT24}. This is the main reason we will be able to obtain an algorithmic
Erd\H{o}s--P\'osa result for $\Xi_{\geq t}$, but only an existential one for $\Theta_{t}$ induced minors.

\begin{restatable}[Induced Erd\H{o}s--P\'osa for long thetas]{theorem}{EPTheta}\label{thm:EP-thetas}	
	For every graph $G$ and an integer $t \geq 1$, 
	if $k$ is the maximum integer such that
	$G$ contains $k \Theta_t$ as an induced minor, 
	then $G$ contains a set $X \subseteq V(G)$ of size $\Oh(tk \log k)$ such that $G-N[X]$ is $\Theta_t$-induced-minor-free.
\end{restatable}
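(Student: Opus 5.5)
The plan is to reduce \cref{thm:EP-thetas} to an induced \EP statement for long three-path-configurations $\Xi_{\geq t'}$ with $t' = t+2$, and then use \cref{lem:theta-xi} to move between the two objects. The core result I would aim for is: for every graph $G$ and every $t'$, either $G$ contains $k$ pairwise disjoint and anti-adjacent $\Xi_{\geq t'}$'s, or there is a set $X$ with $|X| = \Oh(t' k\log k)$ such that $G - N[X]$ contains no $\Xi_{\geq t'}$. Granting this, let $k$ be maximal such that $G$ contains $k\Theta_t$ as an induced minor. Suppose first that $G$ contains $k+1$ disjoint anti-adjacent $\Xi_{\geq t}$'s. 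By \cref{lem:theta-xi}(2) each of them contains $\Theta_{\geq t}$, and hence $\Theta_t$, as an induced minor. Since the $\Xi$'s are pairwise anti-adjacent, these models combine into a model of $(k+1)\Theta_t$, contradicting maximality. So the hitting outcome holds for $\Xi_{\geq t}$, with $|X| = \Oh(tk\log k)$.

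The hitting set has to be for $\Theta_t$-induced minors, not for $\Xi_{\geq t}$, so the indices must be arranged carefully. By \cref{lem:theta-xi}(1), a model of $\Theta_{\geq t+2}$ in $G - N[X]$ would yield a $\Xi_{\geq t}$ there. So I would run the argument with the packing side at parameter $t$ and the hitting side at $t+2$. More precisely, I would prove a two-parameter version: either there are $k+1$ anti-adjacent $\Xi_{\geq t}$'s, or $N[X]$ hits every $\Xi_{\geq t}$. Then $G-N[X]$ contains no $\Theta_{t+2}$, and I would absorb the shift from $t$ to $t+2$. One way is to note that $\Theta_t$ and $\Theta_{t+2}$ are related by subdividing. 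Another is to observe directly that a $\Theta_t$ induced minor model has branch sets which, after taking shortest paths inside the branch sets, give a $\Xi_{\geq t-c}$. If the constant loss cannot be avoided, I would strengthen the packing side to use $\Xi_{\geq t}$ itself, so that the two sides line up exactly.

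For the core $\Xi$ statement I would follow the Erd\H{o}s--P\'osa / Simonovits strategy in the style of Ahn--Gollin--Huynh--Kwon. Take a maximal induced packing of $\Xi_{\geq t'}$'s, or work with a minimal counterexample. Take a BFS-like layering around a shortest or minimal $\Xi$, or build a ``frame'' subgraph $F$ consisting of the $\Xi$'s found so far. The hitting set is built from $\Oh(t')$ vertices per object, namely branch and attachment vertices. The $\log k$ factor comes from a Simonovits-type argument: if the frame has many vertices of high degree, one finds many anti-adjacent $\Xi$'s. Otherwise the frame has few branch points, and its long degree-2 paths are tied together, so $N[X]$ for $X$ equal to the branch points plus $\Oh(t')$ vertices near each short path kills every remaining $\Xi$.

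The main obstacle is inducedness. Two objects that are disjoint in the frame may still be adjacent through chords, and rerouting along a path can create chords that destroy the three-path structure. Here I would use $N[X]$ rather than $X$: put a vertex into $X$ whenever a chord-heavy region appears, so that its neighbourhood removes all nearby alternatives. I would also use minimality, choosing each $\Xi$ with the fewest vertices, to guarantee that the paths are induced and locally geodesic.
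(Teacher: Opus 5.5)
\textbf{There is a genuine gap.} Your first step is sound and is exactly how the paper begins. By \cref{lem:theta-xi}(2), anti-adjacent $\Xi_{\geq t}$'s certify equally many anti-adjacent $\Theta_t$ induced minors. So the $\Xi_{\geq t}$ Erd\H{o}s--P\'osa theorem (\cref{thm:EP-3LPC-algo}) gives $X$ with $|X| = \Oh(tk\log k)$ such that $G-N[X]$ has no $\Xi_{\geq t}$.

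The gap is the passage from ``no $\Xi_{\geq t}$'' to ``no $\Theta_t$ induced minor''. This loss is not a re-indexing artifact you can absorb. \cref{fig:turtle} shows a $\Theta_t$ induced minor model, with path-shaped branch sets for the two degree-3 vertices, that contains no $\Xi_{\geq t-1}$. A disjoint union of $k$ such graphs therefore has no $\Xi_{\geq t}$ at all, so your $\Xi$ step is vacuous there, yet $k$ neighbourhoods are still needed. None of your proposed fixes handles this:
\begin{itemize}
\item Subdivision points the wrong way: $\Theta_{t+2}$-freeness is weaker than $\Theta_t$-freeness.
\item Hitting $\Xi_{\geq t-2}$ instead, so that \cref{lem:theta-xi}(1) kills every $\Theta_t$, breaks the packing side. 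Anti-adjacent $\Xi_{\geq t-2}$'s only certify $\Theta_{t-2}$'s, whose number is not bounded by $k$: many disjoint copies of $\Theta_{t-2}$ have $k=0$.
\item ``Using $\Xi_{\geq t}$ on the packing side'' is what you already did; it leaves the hitting side unchanged.
\end{itemize}

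The missing idea is a second, separate Erd\H{o}s--P\'osa argument for $\Theta_t$ inside the $\Xi_{\geq t}$-free graph $G' = G-N[X]$. The paper obtains it from dominated separators rather than from ears. \Cref{lem:initial_theta} uses a guided Gy\'arf\'as path and the guided measure $\mu$. It shows that for \emph{any} monotone guidance, a connected graph either has an $\Oh(t)$-dominated guided separator, or contains a promising partial $t$-long 3-path configuration, hence a short $t$-model of $\ThetaZero$ and thus a $\Xi_{\geq t}$ (\cref{lem:extend_tripod_to_theta,lem:xis-from-model}). In $G'$ the second outcome is impossible.

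Take the guidance that returns the component $C$ with $\mathsf{pack}_{\Theta_t}(C) > \frac12\mathsf{pack}_{\Theta_t}(G')$. This yields $S$ with $|S| = \Oh(t)$ such that every component of $G'-N[S]$ has at most half the $\Theta_t$-packing number. Induction then gives $|S| + \sum_i \lambda t k_i\log(2k_i) \le \lambda t + \lambda t\sum_i k_i(\log(2k)-1) \le \lambda tk\log(2k)$. This guidance is not efficiently computable, which is why the theorem is only existential.

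Separately, your sketch of the core $\Xi$ statement is too vague to count as a proof. It does not confront, for instance, that ears with both ends on one edge only create parallel edges and no new thetas. Nor does it address the corner case of a single $\Xi$-containing component attached to the frame. However, the paper proves that statement on its own, so the decisive gap for this theorem is the one above.
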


\begin{restatable}[Algorithmic induced Erd\H{o}s--P\'osa for long 3PCs]{theorem}{algEPforXis}\label{thm:EP-3LPC-algo}
	There exists an algorithm that, given a graph $G$ and an integer $t$, in time $|V(G)|^{\Oh(t)}$, outputs an integer $k$ and the following objects:
	\begin{itemize}[itemsep=0px]
		\item a family of $k$ vertex-disjoint anti-adjacent $\Xi_{\geq t}$s in $G$; and
		\item a set $X \subseteq V(G)$ of size $\Oh(tk\log k)$ such that $G - N[X]$ does not contain any $\Xi_{\geq t}$.
	\end{itemize}
\end{restatable}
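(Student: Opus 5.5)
We follow the classical Erd\H{o}s--P\'osa template (as in Simonovits' proof for cycles and in the argument of Ahn, Gollin, Huynh, and Kwon for \cref{thm:coarseEP}), transferred to the induced setting, where the unit of deletion is a closed neighbourhood $N[v]$ rather than a single vertex. The algorithm maintains a family $\mathcal{P}$ of pairwise disjoint, anti-adjacent $\Xi_{\geq t}$s and tries to either enlarge it or certify that a small $N[X]$ hits all $\Xi_{\geq t}$s. Since a $\Xi_{\geq t}$ can be detected in time $n^{\Oh(t)}$ (\cref{lem:detect-L3PC}), each step below is polynomial for fixed $t$.

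\textbf{Step 1 (greedy packing and minimal objects).} Start with $\mathcal{P}$ empty. Repeatedly find a $\Xi_{\geq t}$ in $G-N[V(\bigcup\mathcal{P})]$ and shrink it to a minimal one, for instance by shortening each of its three paths as long as possible while preserving the $\Xi_{\geq t}$ structure. Add it to $\mathcal{P}$. When no $\Xi_{\geq t}$ remains, let $k=|\mathcal{P}|$ and let $Z=V(\bigcup\mathcal{P})$. Then $N[Z]$ hits every $\Xi_{\geq t}$, but $|Z|$ is not bounded in terms of $k$ and $t$. The task is therefore to replace $Z$ by a set $X$ of size $\Oh(tk\log k)$, or else to exhibit $k+1$ anti-adjacent $\Xi_{\geq t}$s, in which case we restart with the larger packing.

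\textbf{Step 2 (compress to a bounded-degree ``frame'').} For a minimal $\Xi_{\geq t}$ $P$, every other $\Xi_{\geq t}$ $Q$ in $G-N[V(\mathcal{P}\setminus\{P\})]$ must touch $N[P]$. Using minimality, we argue that $Q$ either meets $N[v]$ for one of $\Oh(t)$ ``special'' vertices of $P$ (the branch vertices, the start and end cliques, and the vertices within distance $t$ of them), or it attaches to the long induced paths of $P$ in at least two far-apart places. In the latter case we could reroute through $Q$ and obtain a shorter $\Xi_{\geq t}$, contradicting minimality. Putting the $\Oh(t)$ special vertices of each member of $\mathcal{P}$ into $X_0$ gives $|X_0|=\Oh(tk)$. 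We then build an auxiliary multigraph $H$: its branch vertices are these special pieces, and its edges are the induced paths of $G-N[X_0]$ (more precisely, the components) that connect the packing elements. Every remaining $\Xi_{\geq t}$ must then use such connecting paths in a way that corresponds to a cycle or a theta in $H$.

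\textbf{Step 3 (apply the classical bound).} In $H$, the maximum degree and the number of vertices of degree at least 3 are bounded by $\Oh(k)$. If $H$ contains $\Omega(k\log k)$ degree-3 branch vertices, the Simonovits argument gives $k+1$ disjoint short cycles (of length $\Oh(\log k)$) in $H$. We lift these to $k+1$ anti-adjacent $\Xi_{\geq t}$s in $G$, obtaining the length $t$ from the long paths of the packing elements. Otherwise, hitting the branch vertices of $H$ together with $\Oh(t)$ neighbourhood centres per branch vertex yields the set $X$ with $|X|=\Oh(tk\log k)$.

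The main obstacle is the lifting step, that is, preserving anti-adjacency. Disjoint subgraphs of $H$ correspond to structures in $G$ that may still be adjacent through chords, and paths of $G-N[X_0]$ need not be induced together. My plan is to define $H$ on components after deleting $N[X_0]$ and to use the ``touch $N[\cdot]$'' minimality argument to control adjacencies. I expect that making this work with only a $t$ factor, rather than $t^2$, will need the most care.
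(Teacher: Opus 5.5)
Your proposal has a genuine gap: the key claims in Steps 2 and 3 fail, and the difficulty you defer at the end is the central one.

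\textbf{Step 2.} You claim that a $\Xi_{\geq t}$ $Q$ surviving near a minimal packing element $P$ either meets $N[v]$ for one of $\Oh(t)$ special vertices of $P$, or attaches to $P$ at two far-apart places. This is false, and with it goes the claim that every surviving $\Xi_{\geq t}$ corresponds to a cycle or theta in $H$.

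Take $Q$ touching $P$ only within a window of a few consecutive vertices in the middle of one long path of $P$. Then $Q$ avoids $N[X_0]$ and gives no shortcut of $P$, so minimality is not violated. It also connects no two packing elements: it sits inside a single component of $G-N[X_0]$ playing the role of one edge of $H$. So it is invisible in $H$ and missed by your final set.

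The far-apart case does not yield a contradiction either. A detour through $Q$ need not be shorter than what it bypasses. If the attachments lie on different paths of $P$, rerouting creates a richer theta-like structure, not a shorter $\Xi_{\geq t}$; in the paper such an ear between distinct edges is used for progress, not for contradiction.

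This single-local-attachment situation (a unique component containing the object that touches only one edge bag) is exactly the hard corner case of the paper. For holes it is handled by \cref{lem:split-one-component-from-model}. For $\Xi_{\geq t}$ it is handled by winning scenarios (\cref{def:winning-scenario}) and the slider argument of \cref{lem:first-ears}, which grows the initial model until deleting any single edge of $H$ still leaves a theta. The paper then recurses on the components of $G-N[R]$ with a convexity accounting to reach $\Oh(tk\log k)$. Your plan has no replacement for this, so hitting the branch vertices of $H$ plus $\Oh(t)$ centres each is unsupported.

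\textbf{Step 3.} There are two further problems here.

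First, Simonovits' theorem gives disjoint cycles, but a cycle of $H$ lifts at best to a long hole, not to a $\Xi_{\geq t}$. You need disjoint \emph{thetas} in $H$, and many degree-$3$ vertices do not force them when there are many parallel edges (a path with every second edge doubled). The paper proves \cref{thm:thetas-in-cubic} with penalty $8\,n_{\leq 2}(H)+18\,n_{||}(H)$. It keeps both terms $\Oh(1)$ by only ever adding ears between \emph{distinct} edges. Separately, your assertion that $H$ has bounded degree and $\Oh(k)$ branch vertices is unjustified and contradicts your next sentence.

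Second, the anti-adjacency problem you flag is the heart of the matter. Components of $G-N[X_0]$ may attach to packing elements at many places, so disjoint subgraphs of $H$ need not give non-touching structures in $G$. The paper drops the greedy packing and grows a \emph{single} guarded $t$-model by repeatedly adding a \emph{shortest} ear between two distinct edges. Shortness means three things:
\begin{itemize}
\item vertex bags and the $(t+3)$-prefixes of edge bags are guards;
\item every vertex outside $N[R]$ sees at most three consecutive vertices of one edge bag;
\item ears of distance at least $t+3$ have length more than $t$.
\end{itemize}
Shortness is preserved by \cref{lem:short-model-preserved}. It keeps the object a $t$-model, so disjoint theta minors of $H$ project to anti-adjacent $\Xi_{\geq t}$s (\cref{lem:xis-from-model}). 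At proper-maximality it forces every component of $G-N[R]$ to touch at most one edge bag (\cref{lem:max-model-no-inter-ear}). Finally, $|R|=\Oh(t\,k_H\log k_H)$ is charged against the number $k_H$ of thetas packed in $H$.
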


In \cref{lem:ep-cycles-no-xi}, we show that in the class of graph without 
a $\Xi_{\geq t}$, the cycles $C_{\geq t}$ have the induced \EP{} property
with the size of $X$ bounded by $\Oh(tk)$.
Consequently, 
the lower bound of $\Omega(tk + k \log k)$ extends also to~\cref{thm:EP-thetas,thm:EP-3LPC-algo}.

Note that even though $\Theta_t$ and $\Xi_{\geq t}$ are closely related by \cref{lem:theta-xi} and both contain $C_{\geq t}$, the results of \cref{thm:EP-cycles-algo,thm:EP-thetas,thm:EP-3LPC-algo} are not immediate consequences of each other.
However, our approach actually allows us to derive \cref{thm:EP-cycles-algo,thm:EP-thetas} from \cref{thm:EP-3LPC-algo}.

\paragraph{Dominated balanced separators.}

It turns out that our approach can be used to derive some results outside the realm of the (induced) Erd\H{o}s--P\'osa property.
Let $G$ be a graph with vertex weights $\wei: V(G) \to \mathbb{R}_{\geq 0}$. For a set $X \subseteq V(G)$, we write $\wei(X) = \sum_{x \in X} \wei(x)$.
A \emph{balanced separator} in $(G, \wei)$ is a set $S \subseteq V(G)$ such that every component of $G-S$ has weight at most $\wei(V(G))/2$. We say that a class $\mathcal{G}$ of graphs \emph{admits $d$-dominated balanced separators}, for some integer $d$, if for every $G \in \mathcal{G}$ and every weight function $\wei$, there exists $X \subseteq V(G)$ of size at most $d$ such that $N[X]$ is a balanced separator in $(G,\wei)$.

We show the following result.

\begin{restatable}{theorem}{thmdbsfinal}\label{thm:dbs-final}
	For every $k,t \geq 1$, the following classes of graphs admit $O(t k \log k)$-dominated balanced separators:
	\begin{itemize}
		\item $kC_{t}$-induced-minor-free graphs,
		\item $k\Theta_{t}$-induced-minor-free graphs,
		\item graphs with no $k$ vertex-disjoint and anti-adjacent $\Xi_{\geq t}$s.
	\end{itemize}
	Furthermore, given a vertex-weighted graph $G$ on $n$ vertices,
	in time $n^{\Oh(tk \log k)}$ one can either find a $O(t k \log k)$-dominated balanced separator,
	or a family of $k$ vertex-disjoint and anti-adjacent $\Xi_{\geq t}$s.
	From the latter outcome one can, in polynomial time, obtain an induced minor model of $k\Theta_t$ and $kC_t$.
\end{restatable}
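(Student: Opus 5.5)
The plan is to reduce the theorem to the third class, graphs with no $k$ vertex-disjoint and anti-adjacent $\Xi_{\geq t}$s, and then handle that class with the hitting set from \cref{thm:EP-3LPC-algo}.

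\textbf{Reduction to the third class.} By \cref{lem:theta-xi}(2), every $\Xi_{\geq t}$ contains $\Theta_{\geq t}$, hence $\Theta_t$, as an induced minor. Every $\Theta_t$ contains a $C_t$ induced minor. An induced minor model of $k\Theta_t$ or $kC_t$ is therefore obtained by taking the union of the models inside the $k$ anti-adjacent $\Xi_{\geq t}$s, which remain pairwise anti-adjacent. So a $kC_t$-free or $k\Theta_t$-free graph has no $k$ anti-adjacent $\Xi_{\geq t}$s, and it suffices to treat the third class. This also gives the polynomial-time conversion claimed in the last sentence of the theorem.

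\textbf{Separating the $\Xi$-free remainder.} Let $G$ have no $k$ anti-adjacent $\Xi_{\geq t}$s, with weights $\wei$. By \cref{thm:EP-3LPC-algo}, there is $X_1$ of size $\Oh(tk\log k)$ such that $G_1 = G - N[X_1]$ has no $\Xi_{\geq t}$. If some component of $G_1$ has weight above $\wei(V(G))/2$, restrict attention to that component $H$. The target is a set $X_2$ of size $\Oh(t)$ such that $N[X_2]$ splits $H$ into pieces of weight at most $\wei(V(G))/2$. Then $X_1\cup X_2$ is the required dominator, since every component of $G - N[X_1\cup X_2]$ lies either in a light component of $G_1$ or in a piece of $H$.

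\textbf{Main obstacle: $\Xi_{\geq t}$-free graphs.} The key step is showing that $\Xi_{\geq t}$-free graphs admit $\Oh(t)$-dominated balanced separators. I would first try the following argument. Take a BFS layering of $H$ from a vertex $r$ and let $L_i$ be the first layer whose removal leaves the outer side light. Then look for a small dominating structure of a suitable window of layers, say $L_{i-t},\dots,L_{i+t}$. The intuition is this. If the frontier at layer $L_i$ needed many dominators, we could pick three far-apart vertices of $L_i$ whose outward continuations are large. Their shortest paths back to $r$, together with connections on the outer side, would then form three long anti-adjacent paths. These paths would meet at two ends, either at a common vertex or at a triangle, which gives a $\Xi_{\geq t}$.

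Making the endpoints satisfy the "equal or pairwise adjacent" condition requires a Y-shaped minimal connected subgraph argument: a minimal connected set containing three vertices is either a subdivided claw or a triangle with paths attached. Applying this on both the inner and the outer side yields the two ends of the 3PC. Ensuring the paths are anti-adjacent and longer than $t$ requires the $2t$-wide window. This is also where the bound becomes $\Oh(t)$ rather than $\Oh(1)$, since ruling out a theta forces the layers to be dominated by a union of $\Oh(1)$ shortest paths of length $\Oh(t)$ (compare \cref{lem:ep-cycles-no-xi}).

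\textbf{Algorithm.} Run the algorithm of \cref{thm:EP-3LPC-algo}. If it returns $k$ anti-adjacent $\Xi_{\geq t}$s, output them. Otherwise enumerate all candidate sets $X$ of size $\Oh(tk\log k)$ in time $n^{\Oh(tk\log k)}$ and test whether $N[X]$ is a balanced separator. The structural argument guarantees that such an $X$ exists in this case.
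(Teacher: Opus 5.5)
\textbf{Verdict: there is a genuine gap at the step you flag as the main obstacle.} Your outer structure is the paper's: reduce to the third class via \cref{lem:theta-xi}, take the hitting set from \cref{thm:EP-3LPC-algo}, and add a separator for the $\Xi_{\geq t}$-free remainder, which is \cref{lem:ep-to-dbs}. Your brute-force search is also fine once existence is known.

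The missing piece is that $\Xi_{\geq t}$-free graphs admit $\Oh(t)$-dominated balanced separators (\cref{cor:xi-dbs}), and the BFS sketch cannot deliver it.
\begin{itemize}
\item \emph{The underlying claim is false.} You assume that excluding $\Xi_{\geq t}$ forces a window of layers around the balancing layer to be dominated by $\Oh(1)$ short paths. Every tree is $\Xi_{\geq t}$-free. Yet in a complete binary tree of depth $D$ searched from a leaf, the balancing layer is the one containing the root. That layer also contains $2^{\Omega(D)}$ vertices pairwise at distance at least $3$, while the root alone is a balanced separator.
\item \emph{The dichotomy fails too.} You argue that many dominators needed $\Rightarrow$ three far-apart frontier vertices with large outward continuations. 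But at most one outer component can be heavy, and in this example only the root leads into it.
\item \emph{The construction does not give a $\Xi_{\geq t}$ even with three good vertices.} BFS paths back to $r$ merge and touch. A minimal tripod in $L_{\le i}$ may have a short leg. Outer connections through $L_i\cup L_{i+1}$ may be adjacent to the inner legs. Nothing in the sketch yields a partial $t$-long 3PC whose ends stay connected outside its shell, i.e.\ a \emph{promising} one, which is what \cref{lem:extend_tripod_to_theta} requires.
\end{itemize}

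The missing idea is to measure how many closed neighbourhoods are needed to cut a set off from the big component, rather than trying to dominate a layer. The paper's \cref{lem:initial_theta} applies this guided measure $\mu$ along a guided Gyárfás path $P$.
\begin{itemize}
\item If $\mu(N[P])\le 8t+3$, a separator follows directly.
\item Otherwise, take the shortest prefix $P'$ with $\mu(N[P'])=8t+3$, and its end segments $Q_1,Q_2$ with $\mu(N[Q_1])=\mu(N[Q_2])=4t+1$.
\item Since $8t+3>2(4t+1)$, there is $v\in N(P')\setminus N[Q_1\cup Q_2]$ with a neighbour in $\BIG(G-N[P'])$.
\item Build a tripod at $v$ with two legs of $t$ vertices along $P'$, and a third leg of length $t$ grown Gyárfás-style into the big component. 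If this growth gets stuck, a separator for $N[P']$ plus the leg is $(9t+4)$-dominated.
\item The tripod has only $3t+1$ vertices, so its shell cannot cut $N[Q_1]$, $N[Q_2]$, or the tip of the third leg from the big component. Hence it is promising and yields a $\Xi_{\geq t}$.
\end{itemize}
This measure-based accounting is exactly what handles the tree phenomenon above.
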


Recall that the complexity of finding $\Theta_t$ as an induced minor is unclear.
Still, \cref{thm:dbs-final} circumvents this issue by either returning an induced minor model,
or balanced separator dominated by few vertices.

Again, \cref{thm:dbs-final} confirms a special case of a well-known conjecture, this time by Gartland and Lokshtanov~\cite{DBLP:phd/us/Gartland23}.

\begin{restatable}[Gartland and Lokshtanov~\cite{DBLP:phd/us/Gartland23}]{conjecture}{conjdbs}
	\label{conj:dbs-planar}
	For every planar graph $H$, there exists a constant $d$ 
	such that $H$-induced-minor-free graphs admit $d$-dominated balanced separators.
\end{restatable}

\cref{conj:dbs-planar} is wide open and so far was confirmed only for very restricted classes of planar graphs $H$, 
including paths~\cite{DBLP:journals/algorithmica/BacsoLMPTL19}, cycles~\cite{DBLP:journals/siamcomp/ChudnovskyPT24}, wheels~\cite{DBLP:journals/corr/abs-2512-12329}, $K_{2,q}$ for any fixed $q$, or $K_5$ minus an edge~\cite{DBLP:journals/jct/DallardMS24}.

\cref{thm:dbs-final} has some interesting consequences.
To discuss the first one, we need to introduce the concept of \emph{tree-independence number}. The \emph{independence number} of a tree decomposition is the maximum size of an independent set contained in a single bag. The \emph{tree-independence number} of a graph $G$ is the minimum independence number of a tree decomposition of $G$.
Graphs with small tree-independence number attract a significant attention, both from the structural~\cite{DBLP:journals/jct/DallardMS24,DBLP:conf/soda/ChudnovskyGHLS25,DBLP:journals/jctb/ChudnovskyHLS24} and the algorithmic~\cite{DBLP:journals/jcss/LimaMMORS26,DBLP:conf/esa/LokshtanovPR26} point of view. One of well-known open problems in the area is the following conjecture. (A graph is $H$-free if it does not contain $H$ as an induced subgraph.)

\begin{conjecture}[Dallard, Krnc, Kwon, Milani\v{c}, Munaro, \v{S}torgel, and Wiederrecht~\cite{DBLP:journals/corr/abs-2402-11222}]\label{conj:treealpha}
For every integer $\ell$ and every planar graph $H$, there exists an integer $c_{\ell,H}$
such that every graph  which is $H$-induced minor-free and $K_{1,\ell}$-free
has tree independence number at most $c_{\ell,H}$.
\end{conjecture}

Note that if a graph $G$ is $k \Theta_t$-induced-minor-free and $K_{1,\ell}$-free, then the largest independent set in the  subgraph induced by the balanced separator given by \cref{thm:dbs-final} is of size at most $\Oh(t\ell \cdot k \log k)$.
Combining this observation with known results concerning building tree decompositions of small tree-independence number from separators with no large independent sets~\cite[Lemma 7.1]{DBLP:journals/jctb/ChudnovskyHLS24}, we immediately obtain the following corollary, confirming \cref{conj:treealpha} in the case that every component of $H$ is a theta.

\begin{corollary}\label{cor:thetas-treealpha}
	For every $k,t$, and $\ell$, every graph that is $k \Theta_t$-induced-minor-free and $K_{1,\ell}$-free has tree-independence number $\Oh(t\ell \cdot k \log k)$.
\end{corollary}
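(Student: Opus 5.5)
The corollary will follow by combining \cref{thm:dbs-final} with the standard passage from balanced separators to tree decompositions. Let $G$ be $k\Theta_t$-induced-minor-free and $K_{1,\ell}$-free. Every induced subgraph $G'$ of $G$ is again $k\Theta_t$-induced-minor-free, so by \cref{thm:dbs-final}, for every weight function $\wei$ on $V(G')$ there is a set $X \subseteq V(G')$ with $|X| = \Oh(tk\log k)$ such that $N_{G'}[X]$ is a balanced separator of $(G',\wei)$.

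The first step is to bound the independence number of such a separator. Take an independent set $I \subseteq N[X]$. Each vertex of $I$ lies in $N[x]$ for some $x \in X$, so it suffices to control $I \cap N[x]$ for a single $x$. The set $I \cap N(x)$ is independent and contained in $N(x)$, so together with $x$ it induces a star. Since $G$ is $K_{1,\ell}$-free, $|I \cap N(x)| \le \ell-1$. Adding at most one vertex for $x$ itself, we get $|I \cap N[x]| \le \ell$. Summing over $X$ gives $\alpha(G'[N[X]]) \le \ell |X| = \Oh(t\ell\cdot k\log k)$.

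Thus every induced subgraph of $G$, under every weight function, has a balanced separator whose induced subgraph has independence number at most $s = \Oh(t\ell k\log k)$. The second step is to apply \cite[Lemma 7.1]{DBLP:journals/jctb/ChudnovskyHLS24}. That lemma turns such hereditary balanced separators with bounded independence number into a tree decomposition of independence number $\Oh(s)$. I will do this by quoting the lemma rather than re-proving it.

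Its mechanism is the usual recursive construction. Take a set $W$ with $\alpha(W) \le cs$ and choose weights supported on a maximum independent set of $W$. The resulting balanced separator splits $W$ so that each part, together with the separator, again has independence number at most $cs$ for a suitable constant $c$. One then recurses on the components. The only thing to check is that the lemma's hypothesis is exactly "every induced subgraph admits, for every weighting, a balanced separator with small independence number." The hereditarity I noted above supplies precisely this.

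I expect no genuine obstacle here. The only point needing care is to use the neighborhood count correctly: $N[x]$ carries at most $\ell$ independent vertices, not $\ell-1$, because $x$ itself may belong to $I$. The constant factor is absorbed into the $\Oh$ notation.
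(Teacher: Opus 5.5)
Your proposal is correct and is essentially the paper's own argument: the paper also observes that, by $K_{1,\ell}$-freeness, the separator $N[X]$ from \cref{thm:dbs-final} has independence number $\Oh(t\ell\cdot k\log k)$, and then invokes \cite[Lemma 7.1]{DBLP:journals/jctb/ChudnovskyHLS24}. One cosmetic remark: if $x\in I$ then $I\cap N[x]=\{x\}$, so the per-vertex count is really $\max(1,\ell-1)$ rather than ``$\ell-1$ plus one for $x$'', though your bound of $\ell$ is still valid.
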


\paragraph{Algorithmic consequences.}

\cref{thm:dbs-final} yields also some algorithmic corollaries.
First, combining it with known techniques~\cite{DBLP:journals/dam/GroenlandORSSS19,DBLP:journals/algorithmica/BacsoLMPTL19,DBLP:journals/siamcomp/ChudnovskyPT24,DBLP:journals/corr/abs-2512-12329}, we immediately obtain the following result.

\begin{theorem}\label{thm:thetas-algos}
	Let $k,t$ be fixed constants.
	The \textsc{Max Weight Independent Set} (\MWIS) problem admits a QPTAS in $k \Theta_t$-induced-minor-free graphs.	
	Furthermore, \MWIS and \textsc{List-3-Coloring} can be solved in time $2^{\Oh(\sqrt{n \log n})}$ in $n$-vertex $k \Theta_t$-induced-minor-free graphs.

	In all cases, the algorithm can accept any graph on input and, within the claimed running time bound, produce the desired output or an induced minor model of $k \Theta_t$.
\end{theorem}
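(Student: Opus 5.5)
We derive \cref{thm:thetas-algos} from the algorithmic part of \cref{thm:dbs-final} using the standard ``few dominating vertices give a balanced separator'' branching framework. Fix $d = \Oh(tk\log k)$ for the bound of \cref{thm:dbs-final}. For any induced subgraph $G'$ of the input and any weight function, the algorithm of \cref{thm:dbs-final} either returns a set $X$ of at most $d$ vertices with $N[X]$ a balanced separator, or returns $k$ anti-adjacent $\Xi_{\geq t}$s. We convert the latter in polynomial time into an induced minor model of $k\Theta_t$ in $G' \subseteq G$, and hence in $G$, and stop. The separator computation takes $n^{\Oh(tk\log k)}$ time, which is polynomial for fixed $k,t$. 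So from now on we may assume every induced subgraph we meet has such a separator, computable in polynomial time. Note that the class of $k\Theta_t$-induced-minor-free graphs is closed under induced subgraphs, which makes the recursion below sound.

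\textbf{QPTAS for \MWIS.} Follow the scheme of Chudnovsky, Pilipczuk, Pilipczuk, Thomassé~\cite{DBLP:journals/siamcomp/ChudnovskyPT24}.
\begin{enumerate}
\item Fix an optimal solution $I$ and weight the vertices by membership in $I$ (or uniformly, for a fractional argument).
\item The separator $N[X]$ with $|X|\le d$ can be guessed by trying all $n^{d}$ candidates for $X$. Removing $N[X]$ costs only the weight of $I \cap N[X]$.
\item To keep this loss small, first delete all vertices of high $I$-degree: a vertex whose neighborhood contains at least an $\epsilon/\log n$ fraction of $I$. Guessing a ``heavy'' vertex of $I$ or discarding it is handled by branching, as in \cite{DBLP:journals/siamcomp/ChudnovskyPT24,DBLP:journals/corr/abs-2512-12329}.
\item After this cleanup, each separator removes at most a $d\epsilon/\log n$ fraction of $I$. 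The recursion has depth $\Oh(\log n)$ because components halve in $I$-weight.
\item Rescaling $\epsilon$ by $d$ gives a $(1-\epsilon)$-approximation in time $n^{\mathrm{polylog}(n)\cdot \mathrm{poly}(d/\epsilon)}$.
\end{enumerate}
Each call to \cref{thm:dbs-final} is made on an induced subgraph, so if any call fails we output the induced minor model.

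\textbf{Subexponential algorithms.} For \MWIS and \textsc{List-3-Coloring} we use the argument of Bacsó et al.~\cite{DBLP:journals/algorithmica/BacsoLMPTL19} and Groenland et al.~\cite{DBLP:journals/dam/GroenlandORSSS19}.
\begin{enumerate}
\item If there is a vertex of degree at least $\sqrt{n\log n}$, branch on it.
\begin{itemize}
\item For \MWIS, either delete it, or take it and delete its neighborhood.
\item For \textsc{List-3-Coloring}, branch on its colour and remove that colour from the lists of its neighbours. This shrinks total list size by about $\sqrt{n\log n}$.
\end{itemize}
The usual recurrence analysis bounds this branching by $2^{\Oh(\sqrt{n\log n})}$.
\item Otherwise the maximum degree is below $\sqrt{n\log n}$. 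Then $|N[X]| \le d(\sqrt{n\log n}+1)$, so \cref{thm:dbs-final} with uniform weights gives a balanced separator of size $\Oh(\sqrt{n\log n})$.
\item Branch over all states of the separator: subsets for \MWIS, or colourings for \textsc{List-3-Coloring}, at most $3^{|S|}$ of them. Recurse on components of size at most $n/2$.
\end{enumerate}
The recurrence $T(n) \le 2^{\Oh(\sqrt{n\log n})}\,T(n/2)$ together with the high-degree branching solves to $2^{\Oh(\sqrt{n\log n})}$.

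The main obstacle is the bookkeeping in the QPTAS: the heavy-vertex branching and the weight-loss accounting must be done correctly with $d$-dominated separators, not with separators of bounded size. Since this is exactly the setting handled in \cite{DBLP:journals/siamcomp/ChudnovskyPT24} for $C_t$-induced-minor-free graphs, and it uses only closure under induced subgraphs plus the existence of $d$-dominated balanced separators, we invoke that machinery as a black box. The only extra point is to check that every invocation is on an induced subgraph, so that a failure certificate lifts to $G$.
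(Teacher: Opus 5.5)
Your route is the paper's: the paper obtains \cref{thm:thetas-algos} by combining the algorithmic part of \cref{thm:dbs-final} with the known frameworks of Chudnovsky et al.\ (QPTAS) and Bacs\'o et al./Groenland et al.\ (high-degree branching, then separators of size $\Oh(\sqrt{n\log n})$), used as black boxes, exactly as you do. One point needs tightening for the ``any input'' clause: in the QPTAS the separator is needed for the unknown weights $w_I$, so a successful call of \cref{thm:dbs-final} for some fixed weight function certifies nothing about $w_I$; instead run the weight-independent algorithm of \cref{thm:EP-3LPC-algo} on each subproblem (as in the first phase of \cref{lem:ep-to-dbs}), which either yields $k$ anti-adjacent $\Xi_{\geq t}$s or guarantees dominated balanced separators for every weight function.
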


Actually, we can also tackle a more general problem.
For fixed integer $r$ and a \textsf{CMSO}$_2$\footnote{\textsf{CMSO}$_2$ is a logic where one can use vertex, edge, and (vertex or edge) set variables, check vertex-edge incidence, quantify over variables, and apply counting predicates modulo fixed integers. See~\cite{DBLP:books/sp/CyganFKLMPPS15} for a formal introduction.} formula $\psi$,
in the $(\tw \leq r,\psi)$-\MWIS problem we are given a vertex-weighted graph $G$ and we aim to find a maximum-weight induced subgraph of treewidth at most $r$ that satisfies $\psi$, or conclude that no such subgraph exists.
This formalism captures a wide range of problems, including \MWIS, \textsc{Induced Matching}, \textsc{Feedback Vertex Set}, or \textsc{Even Cycle Transversal}.

Combining \cref{thm:thetas-algos} with the approach based on the so-called \emph{blob graphs}~\cite{DBLP:conf/stoc/GartlandLPPR21},
we obtain a QPTAS for \emph{unweighted} $(\tw \leq r,\psi)$-\MWIS under a minor restriction that $\psi$ is \emph{hereditary}, i.e., closed under vertex deletion and disjoint unions.

\begin{restatable}{theorem}{thmcmso}\label{thm:thetas-algo-cmso}
Let $r \geq 0$, let $k,t$ be positive integers, $\varepsilon \in (0,1)$ be a real, and $\psi$ be a hereditary \textsf{CMSO}$_2$ formula.
There is an algorithm that, given a graph $G$, in quasipolynomial time, returns one of the following outputs:
\begin{itemize}
\item an induced minor model of $k\Theta_t$ in $G$, or
\item a solution to $(\tw \leq r,\psi)$-\MWIS of size at least $(1-\varepsilon)$ times the optimum, or
\item a correct conclusion that no solution to $(\tw \leq r,\psi)$-\MWIS exists.
\end{itemize}
\end{restatable}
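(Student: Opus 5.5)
We derive the theorem from \cref{thm:thetas-algos} (more precisely, from the dominated balanced separators of \cref{thm:dbs-final}) via the blob-graph reduction of Gartland, Lokshtanov, Pilipczuk, Pilipczuk, and Rzążewski~\cite{DBLP:conf/stoc/GartlandLPPR21}.

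\textbf{Step 1: the reduction.} In that framework, a hereditary $(\tw\le r,\psi)$-\MWIS instance on $G$ is reduced to \MWIS-type problems on the \emph{blob graph} $G^\circ$. Its vertices are the connected vertex sets of $G$ (in practice, those of bounded size, or a suitable polynomial-size family), and two blobs are adjacent if they intersect or are adjacent. A bounded-treewidth induced subgraph is handled via a decomposition of the solution into a bounded number of pieces whose structure is encoded by blobs. For the unweighted QPTAS, we only need the approach through balanced separators. In the recursion, we repeatedly pick a separator $N[X]$ with $|X| = \Oh(tk\log k)$ that is balanced with respect to a suitable weight, for instance uniform weight on the vertices of a current approximate solution. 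We then guess (by exhaustive branching, quasipolynomially many options) how the solution interacts with $N[X]$. Vertices of $N[X]$ adjacent to few solution vertices are handled by the standard sparsification trick: we delete a $\varepsilon/\log n$ fraction of the solution as we recurse, which costs only $(1-\varepsilon)$ overall after $\Oh(\log n)$ levels.

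\textbf{Step 2: the key structural input.} The reduction needs dominated balanced separators not in $G$ itself but in the relevant auxiliary structure. I would show that a $k\Theta_t$ induced minor model in the blob graph $G^\circ$ (or in the graphs produced by the reduction) yields a $k\Theta_t$ induced minor model in $G$. This holds because a connected set of blobs has a connected union, and two non-adjacent blobs are disjoint and anti-adjacent in $G$. So induced minor models lift, and this lifting can be done in polynomial time. Consequently, running the algorithm of \cref{thm:dbs-final} on the auxiliary graph either yields a separator or a $k\Xi_{\ge t}$ family. The latter is converted, via \cref{lem:theta-xi}, into a $k\Theta_t$ model and then lifted to $G$, which gives the first outcome.

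\textbf{Step 3: assembling the algorithm.} Hereditariness of $\psi$ guarantees that restricting a solution to a subinstance remains a solution, and that disjoint unions of solutions of separated subinstances are solutions. Discarding solution vertices at separators, or deleting a small fraction, preserves feasibility. This yields the $(1-\varepsilon)$ guarantee. Infeasibility is detected when even the empty-type bounded checks fail, which gives the third outcome. Bounded treewidth with a CMSO$_2$ property is verified on small pieces by Courcelle's theorem.

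The main obstacle is Step 1: making the separator-based recursion compatible with treewidth-$r$ solutions rather than independent sets. A separator $N[X]$ may cut many edges of the solution. I would follow \cite{DBLP:conf/stoc/GartlandLPPR21} closely, using a tree decomposition of the solution to translate the solution into an independent set of blobs in $G^\circ$, with only $\Oh(r)$ loss per separator vertex. This is where unweightedness is used, since losses are charged against solution size.
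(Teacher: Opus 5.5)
Your plan has the same shape as the paper's. The paper uses the blob-graph machinery as a black box, \cref{thm:alg-blobclosed}, whose only hypotheses are the robust QPTAS of \cref{thm:thetas-algos} and property~($\star$) for $k\Theta_t$. Property~($\star$) is exactly the lifting statement of your Step~2. So Step~1 is off-the-shelf, not the main obstacle; the real content is Step~2, and there your argument fails.

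From ``a connected set of blobs has a connected union'' and ``non-adjacent blobs are disjoint and anti-adjacent'' you do get two things: the sets $Y_v=\bigcup\mathcal{X}_v$ are connected, and $Y_u,Y_v$ do not touch when $uv\notin E(H)$. But for $uv\in E(H)$, the blobs witnessing the edge may merely \emph{intersect}, which is one of the two ways blobs are adjacent in $G^\circ$. Then $Y_u$ and $Y_v$ overlap, and $\{Y_v\}$ is not an induced minor model. This cannot be repaired generically, since it would prove property~($\star$) for every $H$, which is false. If $G$ is a path, three subpaths through a common vertex form a triangle in $G^\circ$, while a path has no $K_3$ induced minor.

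The missing idea is the $\Theta_t$-specific argument of \cref{lem:theta-blobclosed}:
\begin{itemize}
\item By \cref{lem:minor-models-degree-2}, assume each subdivision vertex $a_i,b_i,c_i$ is a single blob. Put $Y=\bigcup X_y$ and $Z=\bigcup X_z$; these are connected and do not touch.
\item The overlap problem sits exactly between $Y$ and $X_{a_1}$, and between $Z$ and $X_{a_t}$. It is removed by replacing $X_{a_1}$ with a component of $X_{a_1}\setminus Y$ touching both $Y$ and $X_{a_2}$, and symmetrically at $Z$. Then $Y,X'_{a_1},X_{a_2},\dots,X'_{a_t},Z$ is an induced path in $G^\circ$ with $Y\cap X'_{a_1}=\emptyset$.
\item By \cref{clm:pathinblob}, a shortest $Y$--$Z$ path in the union of these blobs is induced in $G$. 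It has at least as many vertices as there are blobs, hence at least $t$ internal vertices, all lying in the $a$-blobs.
\item The interiors of the three resulting paths do not touch, because blobs of different paths are non-adjacent in $G^\circ$. Contracting $Y$ and $Z$ yields $\Theta_{\geq t}$, hence $\Theta_t$.
\end{itemize}
This argument is constructive, which you need for the first output.

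Only after this does your generic observation become sufficient, namely for the passage to $k\Theta_t$ (\cref{lem:blobclosed}). The unions $U_i$ of the blobs used by different components are anti-adjacent in $G$, and one applies the connected case inside each $G[U_i]$.
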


This is a step towards confirming the following algorithmic counterpart of \cref{conj:dbs-planar}.

\begin{restatable}[Gartland, Lokshtanov~\cite{DBLP:phd/us/Gartland23}]{conjecture}{conjalg}\label{conj:alg}
For every planar graph $H$, every problem expressible as $(\tw \leq r,\psi)$-\MWIS is polynomial-time-solvable in $H$-induced-minor-free graphs.
\end{restatable}

We remark that even weakenings of this conjecture, for example, allowing a quasipolynomial-time algorithm, or even a QPTAS, are wide open.
Approximation schemes from \cref{thm:thetas-algos} and \cref{thm:thetas-algo-cmso} extend recent analogous results for $k C_{t}$-induced-minor-free graphs~\cite{swatpaper}, obtained via a different approach.

\paragraph{Outline.}
Throughout the paper we use standard graph-theoretic notation\onlysoda{ (see \cref{sec:prelims})}.

As an overview of our main techniques, in Sections~\ref{sec:models} and~\ref{sec:cycles} 
we present the proof of \cref{thm:EP-cycles-algo}.
Section~\ref{sec:models} introduces the notion of a short $t$-model that is the key ingredient
also in the proof of \cref{thm:EP-3LPC-algo}. \onlysoda{(Some tedious technical proofs are postponed to Appendix~\ref{sec:models-later}.)}
Section~\ref{sec:cycles} proves \cref{thm:EP-cycles-algo}.
Section~\ref{sec:cycles} contains also a sketch how to deduce \cref{thm:EP-cycles-algo} from \cref{thm:EP-3LPC-algo}.

We rely on the following (corollary of the) classical result of Simonovits~\cite{Simonovits67}.

\begin{restatable}{theorem}{packcyclesinH}\label{thm:simonovits}
    There exists a function $\fsim(k) = \Oh(k \log k)$ such that every subcubic multigraph $G$ with minimum degree at least $2$ and at least $\fsim(k)$ vertices of degree $3$ contains $k$ vertex-disjoint cycles (as a subgraph).
	Furthermore, such a packing can be found in polynomial time.
\end{restatable}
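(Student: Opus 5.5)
The plan is to derive the statement from Simonovits' theorem~\cite{Simonovits67}. That theorem says: every graph with minimum degree at least $3$ and at least $k\log k$-ish many vertices contains $k$ vertex-disjoint cycles. More precisely, if $n$ is large enough, any graph on $n$ vertices with minimum degree $3$ has roughly $\tfrac{1}{4}\log n$-type packings, or equivalently every cubic multigraph on at least $c\, k\log k$ vertices contains $k$ disjoint cycles.

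\textbf{Step 1: suppress degree-$2$ vertices.} Given a subcubic multigraph $G$ with minimum degree at least $2$, first discard components that are cycles. Such a component contributes a cycle directly and has no degree-$3$ vertices, so if there are many such components we only gain, and we may set them aside. In every remaining component, we suppress each degree-$2$ vertex by replacing its two incident edges with a single edge. This may create loops and parallel edges, which is why we work with multigraphs. A component consisting of one degree-$2$ vertex with a loop cannot arise from a component that contains degree-$3$ vertices. The result is a cubic multigraph $G'$ whose vertex set is exactly the set of degree-$3$ vertices of $G$. Cycles of $G'$, including loops and $2$-cycles formed by parallel edges, correspond to cycles of $G$. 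Vertex-disjoint cycles in $G'$ lift to vertex-disjoint cycles in $G$, because the subdivision vertices on distinct edges of $G'$ are distinct.

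\textbf{Step 2: apply Simonovits.} By the theorem of Simonovits, a cubic multigraph with at least $\fsim(k)=\Oh(k\log k)$ vertices contains $k$ vertex-disjoint cycles. Its known proof is constructive:
\begin{enumerate}
\item Repeatedly take a shortest cycle, which has length $\Oh(\log n)$ by the Moore bound on a BFS tree of a min-degree-$3$ multigraph.
\item Delete that cycle.
\item Re-suppress degree-$\le 2$ vertices.
\end{enumerate}
Deleting a cycle of length $\ell$ removes $\Oh(\ell)$ degree-$3$ vertices, counting also the vertices whose degree drops. Iterating therefore yields $k$ cycles whenever $n\ge c\,k\log k$. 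If one prefers to quote Simonovits verbatim, the statement for multigraphs follows: a loop or parallel pair is already a cycle of length at most $2$, which is exactly what the greedy step picks.

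\textbf{Main obstacle.} The only point needing care is the bookkeeping in this greedy deletion for multigraphs. After removing a short cycle $C$, vertices adjacent to $C$ become degree $2$ or lower. Vertices of degree $\le 1$ must be iteratively pruned along with their trees, so we need the total number of degree-$3$ vertices lost to be $\Oh(|C|)$. I would argue this via the edge-count potential $|E|-|V|$, which equals $n_3/2$ in the suppressed graph. Deleting $C$ together with pruning decreases this potential by at most $\Oh(|C|)=\Oh(\log n_3)$. Summing over $k$ rounds then gives the $\Oh(k\log k)$ bound.

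Each step clearly runs in polynomial time: BFS for a shortest cycle, suppression, and pruning. This gives the algorithmic part of the statement.
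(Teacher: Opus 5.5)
Your proposal is correct and follows essentially the route the paper intends: the paper gives no proof of its own and simply invokes this statement as a corollary of Simonovits' theorem, and your argument makes that corollary and its algorithmic version explicit in the standard way. That argument suppresses degree-$2$ vertices and then runs the greedy shortest-cycle procedure, where the potential $|E|-|V|=n_3/2$ drops by at most $|V(C)|$ when a cycle $C$ is deleted, is unchanged by pruning degree-$1$ vertices and by suppression, and only grows when isolated vertices are removed. The only slip is cosmetic: a minimum-degree-$3$ graph on $n$ vertices has $\Omega(n/\log n)$ disjoint cycles rather than ``$\tfrac14\log n$-type'' packings, but your $c\,k\log k$ formulation is the correct one and is what you actually use (with the monotonicity of $x/\log x$ covering the case $n_3\gg k\log k$).
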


We prove the main result, \cref{thm:EP-3LPC-algo}, in Sections~\ref{sec:prelims}--\ref{sec:L3PC}.
Section~\ref{sec:prelims} introduced further preliminaries and notation,
including the proof of \cref{lem:theta-xi}.
Section~\ref{sec:pack-in-multigraph} presents a statement analogous to \cref{thm:simonovits}
for packing thetas. Finally, in Section~\ref{sec:L3PC} we prove \cref{thm:EP-3LPC-algo}.

Next, we deduce corollaries of \cref{thm:EP-3LPC-algo}: in Section~\ref{sec:L3PC-cor}, we deduce \cref{thm:EP-thetas} from \cref{thm:EP-3LPC-algo},
Section~\ref{sec:dbs} contains the proof of \cref{thm:dbs-final},
and in Section~\ref{sec:blobs} we present the proof of \cref{thm:thetas-algo-cmso}.
Finally, in Section~\ref{sec:outro} we discuss open problems and future research directions.

	\section{Models}\label{sec:models}

\subsection{Definitions}

We define $t$-models, which capture induced minor models of a $t$-subdivided graph, where subdivision paths are actual induced paths of $G$ (see also~\Cref{fig:ear-addition}).
Two vertex sets \emph{do not touch} if they are disjoint and anti-adjacent.
\begin{definition}[$t$-model]\label{def:model}
  Given a graph $G$ and a multigraph $H$, a \emph{$t$-model} of $H$ in $G$ is a tuple $\mH = (H,\eta)$, where the \emph{bag function} $\eta : V(H) \cup E(H) \mapsto 2^{V(G)}$ satisfies the following.

  \begin{enumerate}[label=(\roman*),itemsep=0pt]
    \item For any $v \in V(H)$, $\eta(v)$ induces a connected subgraph of $G$, and for any $u \in V(H)$, $u \neq v$, the sets $\eta(u)$ and $\eta(v)$ do not touch.
    \item For any $e = uv \in E(H)$, $\eta(e)$ induces a path of length more than $t$ in $G$ (i.e., with at least $t$ internal vertices)
    with one endpoint $\eta(e, u) \in \eta(e) \cap \eta(u)$ and the other endpoint
    $\eta(e,v) \in \eta(e) \cap \eta(v)$.
    \item For any $v \in e \in E(H)$, $\eta(e) \setminus \{\eta(e,v)\}$ 
    and $\eta(v) \setminus \{\eta(e,v)\}$ do not touch.
    \item For any $e \in E(H)$ and $w \in V(H)$, if $w \notin e$, then
    the sets $\eta(e)$ and $\eta(w)$ do not touch.
    \item For any $e, f \in E(H)$, $e \neq f$,
    the sets $\eta(e)$ and $\eta(f)$ do not touch, except for the following possibility: if $e$ and $f$ have a common endpoint $v$
    then we allow $\eta(e,v) = \eta(f,v)$ or $\eta(e,v)\eta(f,v) \in E(G)$. 
  \end{enumerate}
\end{definition}
Note that $G$ can be a multigraph, and hence $e$ and $f$ in the last point can be two parallel edges.

Let $\mH = (H,\eta)$ be a $t$-model of $H$ in $G$.
Let $e = uv \in E(H)$.
The length-$\ell$ prefixes of $\eta(e)$ starting at $\eta(e,u)$ and $\eta(e,v)$ are denoted $\eta_{\ell}(e,u)$ and $\eta_{\ell}(e,v)$ respectively;
$\eta_\ell(e,v) = \eta_\ell(e,u) = \eta(e)$ if the length of $\eta(e)$ is at most $\ell$.
We denote by $\eta(e)[a,b]$ the subpath of $\eta(e)$ between $a,b \in \eta(e)$.

We let $V(\mH)$ be the union of all vertex and edge bags, and,
when we consider $\mH$ as a subgraph of $G$,
we mean the subgraph induced by $V(\mH)$.
We shorten $N(V(\mH))$ to $N(\mH)$.
We have the following immediate observation from the definition.
\begin{observation}\label{obs:model-subdivision}
If $\mH$ is a $t$-model of $H$ in $G$, then an induced minor model of
a $t$-subdivision of $H$ can be constructed from $G[V(\mH)]$
by contracting every vertex bag into a single vertex and contracting every edge
bag to a path of length exactly $t+1$.
\end{observation}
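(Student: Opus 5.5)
The plan is to build, for each vertex $v \in V(H)$ and each edge $e \in E(H)$, a branch set of $G[V(\mH)]$ directly from the bags, and then check the induced-minor conditions. For a vertex $v$ I take the branch set $B_v = \eta(v)$, which is connected by (i). For an edge $e = uv$, the path $\eta(e)$ has at least $t$ internal vertices. I split its interior (the vertices other than $\eta(e,u)$ and $\eta(e,v)$) into exactly $t$ consecutive nonempty subpaths $P_e^1,\dots,P_e^t$, ordered from the $u$-side to the $v$-side, and use them as the branch sets of the $t$ subdivision vertices. Every vertex of $V(\mH)$ is then in exactly one branch set. The endpoint $\eta(e,u)$ lies in $\eta(u)$, and I observe that it cannot be an internal vertex of any other edge path by (v). The branch sets are therefore pairwise disjoint, and contracting them gives a graph on the vertex set of the $t$-subdivision of $H$.

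The main step is to show that the adjacencies between branch sets are exactly those of the $t$-subdivision. Consecutive pieces $P_e^i$ and $P_e^{i+1}$ are adjacent because they follow each other along the path. $P_e^1$ is adjacent to $B_u$ through the edge from $\eta(e,u)$ to the first internal vertex, and symmetrically $P_e^t$ is adjacent to $B_v$. For non-adjacency I argue as follows.
\begin{itemize}
\item Non-consecutive pieces of the same $\eta(e)$ are non-adjacent because $\eta(e)$ is an induced path.
\item Pieces of distinct edges $e,f$ are non-adjacent by (v). The allowed exception only concerns the endpoints $\eta(e,v)$ and $\eta(f,v)$, which are not in any piece.
\item $B_w$ with $w \notin e$ does not touch $\eta(e)$ by (iv).
\item For $v \in e$, the set $\eta(v) \setminus \{\eta(e,v)\}$ does not touch the pieces of $e$ by (iii). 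Also $\eta(e,v)$ itself is adjacent only to the first piece on its side, since $\eta(e)$ is induced. So $B_v$ is adjacent to $P_e^1$ only when $v = u$, and to $P_e^t$ only when $v$ is the other endpoint.
\item Distinct vertex bags do not touch by (i).
\end{itemize}

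The one delicate point is the degenerate case of parallel edges and loops, since $H$ is a multigraph. For parallel edges the argument above is unchanged, because each edge gets its own pieces. If loops were allowed, (ii) would give both endpoints in the same bag; the same argument applies as long as $t \geq 1$. In all cases I delete the vertices of $G$ outside $V(\mH)$ and contract each branch set to a single vertex (each is connected). This yields exactly the $t$-subdivision of $H$, where each edge is replaced by a path of length $t+1$, so the construction gives the claimed induced minor model.
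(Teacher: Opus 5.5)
Your argument is correct and is exactly the construction the paper intends: the paper records this as immediate from Definition~\ref{def:model} (contract each $\eta(v)$ and cut the interior of each $\eta(e)$ into $t$ consecutive pieces), and you have written out the routine verification of conditions (i)--(v). One small caveat concerns loops: condition (iii) must be read with both endpoints of $\eta(e)$ removed, and for $t=1$ the subdivided loop is a digon, so you only recover its underlying simple graph --- a looseness already present in the statement rather than a flaw in your proof.
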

An immediate corollary of Observation~\ref{obs:model-subdivision} is that a packing
of disjoint cycles in $H$ projects to a packing of anti-adjacent long holes in $G$.
(\cref{lem:xis-from-model} is an analog of this statement that extracts anti-adjacent $\Xi_{\geq t}$s from a packing of disjoint
$\Theta$ models in $H$.)
\begin{corollary}\label{cor:cycles-from-model}
  If $\mH$ is a $t$-model of $H$ in $G$ and $H$ admits a packing of $k$
  vertex-disjoint cycles, then $G[V(\mH)]$ admits a packing of $k$
  anti-adjacent $C_{\geq t}$s.
\end{corollary}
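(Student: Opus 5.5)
Let $C_1,\dots,C_k$ be the given vertex-disjoint cycles in $H$. Each $C_i$ is a closed walk $v_0 e_1 v_1 e_2 \cdots e_m v_0$ in $H$; note $m=1$ is possible for a loop and $m=2$ for a pair of parallel edges. For each $i$, I will build a cycle $D_i$ in $G$ from the edge bags $\eta(e_j)$ of $C_i$. Consecutive edge paths $\eta(e_j)$ and $\eta(e_{j+1})$ end in the same vertex bag $\eta(v_j)$, at the endpoints $\eta(e_j,v_j)$ and $\eta(e_{j+1},v_j)$. I join these two endpoints by a shortest path $P_j$ inside $G[\eta(v_j)]$, which exists because vertex bags are connected. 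This yields a closed walk; I will argue it is in fact a cycle, not necessarily induced, of length more than $t$, and then extract an induced long cycle from it. Alternatively, I can use \cref{obs:model-subdivision} directly: contracting bags gives an induced minor of the $t$-subdivision $H^{(t)}$ of $H$. In $H^{(t)}$ the cycles $C_i$ become vertex-disjoint cycles of length at least $t$, and anti-adjacent ones, because distinct $C_i$ share no vertices and hence no edges. Lifting back, the union of the branch sets of the subdivided $C_i$ gives vertex sets $B_i\subseteq V(\mH)$ with $G[B_i]$ connected and containing $C_{\geq t}$ as an induced minor.

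The second route is cleaner, so I commit to it. First, disjointness and anti-adjacency of the sets $B_i$ for $i\neq i'$ follows from the ``do not touch'' conditions. Condition (i) separates vertex bags. Condition (iv) separates an edge bag from a vertex bag not incident to its edge. Condition (v) separates edge bags of edges with no common endpoint. Since $C_i$ and $C_{i'}$ share no vertex of $H$, none of the exceptions in (iii) and (v) apply between them, so $B_i$ and $B_{i'}$ do not touch.

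Second, inside each $G[B_i]$ I need an induced cycle of length at least $t$. Here I use the fact that if $C_\ell$ is an induced minor of a graph $F$ with $\ell\ge 4$, then $F$ contains an induced cycle of length at least $\ell$. Take a shortest cycle in the model; a standard argument routes a hole through consecutive branch sets, using induced paths inside each branch set between neighbours of the previous and next branch sets. For $\ell = 3$, $C_3$ is an induced minor exactly when $F$ is not a forest, so we just need any induced cycle in $G[B_i]$, which exists since $B_i$ carries a cycle. Here $\ell\ge t+1\ge 4$ except in degenerate small cases, which are handled the same way.

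The main obstacle is making this hole-extraction step rigorous while ensuring the hole stays inside $B_i$. Since it is taken inside $G[B_i]$, the cycles obtained are automatically disjoint and anti-adjacent by the first step, and each is an induced cycle of length at least $t$, that is, a $C_{\geq t}$.
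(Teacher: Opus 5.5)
Your proposal is correct and is essentially the paper's argument. The paper states this as an immediate consequence of \cref{obs:model-subdivision}: project each $C_i$ to the union $B_i$ of its vertex and edge bags, observe via conditions (i), (iv), (v) that distinct $B_i$ do not touch, and extract a long hole inside each $G[B_i]$.

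For the hole-extraction step you flag, your sketched routing through branch sets does not by itself guarantee inducedness. A direct argument fixes this and avoids both the general lemma and the small cases: take the interior $I$ of one edge bag $\eta(e)$ with $e \in E(C_i)$, which has at least $t$ vertices. Since $\eta(e)$ is an induced path and by (iii)--(v), $I$ has no neighbours in $B_i \setminus I$ other than the two endpoints of $\eta(e)$. Close it with a shortest path between these endpoints in the connected graph $G[B_i \setminus I]$; this gives a hole on at least $t+2$ vertices.
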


A \emph{guarded $t$-model} consists of a $t$-model $\mH = (H,\eta)$ and 
a set $R \subseteq V(\mH)$ of \emph{guards} or \emph{reserved vertices}.
Somewhat abusing the notation, we will denote a guarded $t$-model
as a triple $\mH = (H,\eta,R)$ and still use the notation $V(\mH)$ for the union of the values of $\eta$.

We define ears of the model as the paths only touching $\mH$ at their endpoints, while avoiding $N[R]$.
\begin{definition}[ear]\label{def:ear}
Given a guarded $t$-model $\mH = (H,\eta,R)$ in $G$, an \emph{ear} of $\mH$ is an induced path $P = p_1,...,p_k$ of $G$ such that $p_1,p_k \in N(\mH)-N[R]$, and for any $i \in [2,k-1]$,  $p_i \notin N[\mH]$. 
\end{definition}
\noindent
When $N(p_1) \cap \mH \subseteq \eta(e)$ and $N(p_k) \cap \mH \subseteq \eta(f)$ for some $e,f \in E(H)$, we say that $P$ is an ear \textsl{between} $e$ and $f$.
The \emph{distance} of an ear $P = p_1,p_2,\ldots,p_k$ in $\mH$ is the minimum distance in $G[V(\mH)]$ between $N(p_1) \cap \mH$ and $N(p_k) \cap \mH$.
The \emph{length} of an ear $P$ is defined naturally as the length of the path $P$. 

We now define short $t$-models.
\begin{definition}[short $t$-model]\label{def:short-model}
  For any $t \geq 1$, a \emph{short $t$-model} of $H$ in $G$ is a guarded $t$-model $\mH = (H,\eta,R)$ where:
    \begin{itemize}[itemsep=0pt]
        \item For any $v \in V(H)$, $|\eta(v)| \leq 4$ and $\eta(v) \subseteq R$. For any $e = uv \in E(H)$, $\eta_{t+3}(e,u) \cup \eta_{t+3}(e,v) \subseteq R$.
        \item For any $v \in N(\mH) \setminus N[R]$, set $N(v) \cap \mH$ is contained in a $3$-vertex subpath of some $\eta(e)$. In particular, any ear is between (possibly equal) edges of $H$.
        \item Any ear of distance at least $t+3$ in $\mH$ is of length more than $t$.
    \end{itemize}
\end{definition}
\noindent
Observe that, due to the first point, any ear connecting two different edges of $H$
is of distance at least $t+3$, and hence, due to the last point, of length more than $t$.

In the context of \cref{thm:EP-cycles-algo}, we observe that finding a shortest $C_{\geq t}$ in a graph
gives us a short $t$-model of a single vertex with a single edge (loop),
with an annoying corner case of the shortest $C_{\geq t}$ being actually a $C_t$.
\onlysoda{A formal proof can be found in Section~\ref{sec:models-later}, while}
\cref{lem:detect-L3PC} in Section~\ref{sec:L3PC} is an analog of this lemma in the context of $\Xi_{\geq t}$s,
where we produce a short $t$-model of $\ThetaZero$.
\begin{restatable}{lemma}{leminitcycle}\label{lem:init-cycle}
  Given an $n$-vertex graph $G$ and an integer $t$, one can in time $n^{\Oh(t)}$
  either correctly conclude that $G$ does not contain a $C_{\geq t}$,
  find an induced $C_t$ in $G$, or find a short $t$-model $\mH=(H,\eta,R)$
  in $G$ where $H$ is a one-vertex graph with one edge being a loop and $|R| \leq 2t+7$.
\end{restatable}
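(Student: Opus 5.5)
The plan is to compute a shortest induced cycle of length at least $t$ and turn it into the required model. Enumerate all induced paths $Q=q_0,\dots,q_{t-1}$ on $t$ vertices ($n^{\Oh(t)}$ choices). For each $Q$, delete $N[\{q_1,\dots,q_{t-2}\}]\setminus\{q_0,q_{t-1}\}$ together with $N(q_0)\cap N(q_{t-1})$, keeping $q_0$ and $q_{t-1}$. In the remaining graph, find a shortest path from $q_0$ to $q_{t-1}$ that avoids $Q$'s interior and whose internal vertices are not adjacent to the interior of $Q$. Any induced $C_{\geq t}$ of length $\ell$ contains such a $Q$ together with a complementary induced path, so minimizing over all $Q$ yields a shortest induced cycle of length at least $t$. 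If none exists, we conclude that $G$ has no $C_{\geq t}$. If the shortest one has length exactly $t$, or length small enough that the model cannot be formed (I will also route lengths up to about $2t+6$ into a case where we just output it), we handle it separately. A cycle of length exactly $t$ is returned as an induced $C_t$. For a cycle of moderate length $t<\ell\le 2t+7$, we can still build the model below; the reservation then simply covers the whole cycle.

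Now let $C$ be a shortest induced cycle of length $\ell>t$. Pick a vertex $c\in C$, set $\eta(v)=\{c\}$, and set $\eta(e)$ to be the closed walk $C$ viewed as a path from $c$ to $c$. Both endpoints are equal to $c$, so this is the degenerate loop case that the definition must accept for loops; it may be necessary to take instead $\eta(v)$ to be a short subpath of $C$ with at most $4$ vertices and $\eta(e)$ to be the rest of $C$ plus its two attachment vertices, and I will pick whichever matches conditions (ii)–(iii). Let $R$ be $\eta(v)$ together with the length-$(t+3)$ prefixes from both ends. Then $|R|\le 2(t+3)+\Oh(1)\le 2t+7$, with constants tuned, and if $\ell\le 2t+7$ we let $R=V(C)$. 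The first condition of \cref{def:short-model} holds by construction.

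For the second condition, let $x\in N(C)\setminus N[R]$ be a vertex whose neighbors on $C$ are not contained in a 3-vertex subpath. Then two of those neighbors are at distance at least $3$ along $C$, and $x$ together with an arc of $C$ between them forms an induced cycle. If that cycle has length at least $t$, then choosing the shorter side appropriately gives a shorter $C_{\geq t}$ than $C$, contradicting minimality. Here the choice of arc matters. Take consecutive neighbors $a,b$ of $x$ along $C$ at distance $d\ge 2$, so that $x$ plus the arc between them is an induced cycle of length $d+2$. Since $x\notin N[R]$, all neighbors lie in the non-reserved middle part. If some consecutive gap $d$ satisfies $t-2\le d<\ell-2$, we obtain the contradiction. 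Otherwise every gap is either less than $t-2$ or the whole cycle. Because the neighbors avoid $R$, which contains long stretches of length $t+3$ on both sides of $c$, some gap spans across $R$ and so has length at least $t+3$. That gap is smaller than $\ell-2$ unless $x$ has at most $2$ neighbors, which are then close; this is the case analysis I must verify. \textbf{This is the main obstacle.} The third condition is an ear $P$ whose endpoints attach at distance at least $t+3$ in $C$. If $P$ has length at most $t$, combine it with the shorter arc between the attachments (chosen on the side avoiding the far part) to get an induced cycle of length at least $t$ but shorter than $\ell$. Because the attachments are at distance at least $t+3$, the resulting cycle has length at least $t$. The cycle is shorter since $|P|+2+\text{arc}\le t+2+\lfloor \ell/2\rfloor$ while $\ell\ge 2t+8$. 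Inducedness follows because interior ear vertices are not in $N[C]$ and the endpoints' neighborhoods are within 3-vertex windows.
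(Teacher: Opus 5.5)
Your overall plan is the paper's: take a shortest hole $C$ of length $\ell\ge t$, carve out a vertex bag of at most four consecutive vertices, guard $t+3$ vertices on each side, and derive shortness from minimality. Only your second model option is a valid $t$-model, since an edge bag must induce a path; it is exactly the paper's $\eta(v)=\{x,y,z\}$, $\eta(e)=C-y$. Your ear argument (close the ear along the shorter arc and use $\ell\ge 2t+8$) is a valid variant of the paper's, which closes it through the guarded arc instead.

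\emph{First gap: detection.} The detection step is wrong as written. Requiring $Q$ to be an induced path excludes every $C_t$, since its $t$ consecutive vertices have $q_0q_{t-1}\in E(G)$. Deleting $N(q_0)\cap N(q_{t-1})$ removes the only closing vertex of every $C_{t+1}$. So, e.g., on $G=C_t$ your routine reports that there is no $C_{\ge t}$, and your claim that minimizing over $Q$ yields a shortest $C_{\ge t}$ is false. The paper enumerates $x_1,\dots,x_t$ (allowing $x_1x_t\in E(G)$) and closes with a shortest $x_t$--$x_1$ path in $G-(N[\{x_2,\dots,x_{t-1}\}]\setminus\{x_1,x_t\})$, with no common-neighbour deletion.

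\emph{Second gap: the window condition.} You flag the 3-vertex-window condition as the main obstacle and leave it unproved. The criterion you guess (``unless $x$ has at most $2$ neighbors'') is not the right one: what matters is the span of $N(x)\cap C$, not its size. The paper's argument is short. As $x\notin N[R]$, all neighbours of $x$ on $C$ lie on the unguarded path $C-R$. If the two extreme ones $u_1,u_2$ are at distance at least $3$ along it, replace the subpath strictly between them by $x$. This yields an induced cycle of length at most $\ell-1$ that still contains all of $R$, hence has length more than $t$, contradicting the choice of $C$. In your language: the gap crossing $R$ has $d\ge |R|+1\ge t-2$, so it must satisfy $d\ge \ell-2$, which forces $N(x)\cap C$ into three consecutive vertices.

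\emph{Short holes.} Your claim that the model can still be built for every $t<\ell\le 2t+7$ fails for $\ell\in\{t+1,t+2\}$. There the edge bag would be an induced path of length at most $t$; in fact $G=C_{t+1}$ admits no such loop model at all. Such holes must be returned as a short-hole outcome. This is harmless for \cref{thm:EP-cycles-algo}, which only needs $|V(C)|=\Oh(t)$, and the paper's proof glosses over this case too.
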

\onlyarxiv{\begin{proof}
  First, in $n^{\Oh(t)}$ time find a shortest $C_{\geq t}$ in $G$:
  iterate over all choices $x_1,\ldots,x_t$ of $t$ consecutive vertices on the sought hole
  and close the candidate hole with a shortest path from $x_t$ to $x_1$ 
  in $G-(N[\{x_2,x_3,\ldots,x_{t-1}\}] \setminus \{x_1,x_t\})$. 
  If no hole is found, report the first outcome. If a hole of length $t$ is found, 
  report it as the second outcome.

  Otherwise, let $C$ be the hole found. Let $x,y,z$ be arbitrary consecutive vertices of $C$.
  Let $H$ be a graph with a single vertex $v$ and single edge $e = vv$. 
  Define $\eta(v) = \{x,y,z\}$, $\eta(e) = C \setminus \{y\}$, and $R$
  consisting of all vertices of $C$ that are within distance at most $t+3$ from $y$ on $C$.
  Clearly, $\mH = (H,\eta,R)$ is a guarded $t$-model and $|R| \leq 2t+7$. 

  It remains to show that $\mH$ is short. 
  To this end, consider first $v \in N(\mH) \setminus N[R]$.
  Assume that there are two neighbors $u_1,u_2 \in N(v) \cap \eta(e)$ within distance
  more than $2$ on $C$; without loss of generality, pick $u_1$ and $u_2$ such that $\eta(e)[u_1,u_2]$
  is maximal. 
  Note that we can shortcut $C$ via $u_1-y-u_2$, obtaining a shorter induced cycle, 
  but still longer than $t$ as $R$ contains $2t+7$ vertices. This is a contradiction with the choice of $C$.
  Hence, for every $v \in N(\mH) \setminus N[R]$, $N(v) \cap \eta(e)$ is contained in a subpath
  of $\eta(e)$ of length at most $2$.

  Assume now that $P$ is an ear of $\mH$ of distance more than $t+3$.
  Clearly, $P$ connects $e$ with itself. Let $x_1$ and $x_2$ be the endpoints of $P$, and for $i=1,2$ let $y_i \in \eta(e) \cap N(x_i)$ be the vertex such that one of the subpaths of
  $C$ between $y_i$ and $y$, denoted henceforth $P_i$, does not contain any other vertex of $N[P]$.
  Then, the concatenation of $P_1$, $P$, and $P_2$ is a hole of length at least $|R| \geq t$.
  As $C$ is a shortest $C_{\geq t}$ and $P$ is of distance at least $t+3$, we infer
  that $P$ is of length at least $t$. This proves that $\mH$ is short.
\end{proof}}

\subsection{Ear addition and ear-decompositions}

We now show how to augment a short $t$-model with an ear such that the resulting model is also short. Fix an integer $t \geq 1$.
\begin{definition}[ear addition]\label{def:ear-addition}
    Consider a short $t$-model $\mH = (H,\eta,R)$, and any ear $P = p_1,\ldots,p_k$ of distance at least $t+3$ on $\mH$, say between edges $e,f \in H$.
    We define the operation of adding $P$ to $\mH$, yielding a short $t$-model $\mH' = (H',\eta',R')$.
    
    If $e=f$, then $H'$ is obtained from $H$ by subdividing $e=uv$ into the path $u x y v$ (or a cycle $u x y u$ if $u=v$ and $e$ is a self-loop), letting $g$ be the resulting edge between $x$ and $y$, and adding a parallel edge $g'$ between $x$ and $y$.
    Otherwise (in particular, if $e$ and $f$ are parallel), $H'$ is obtained from $H$ by subdividing $e$ into $u x v$, and $f$ into $w y z$, and adding the edge $g' = x y$.
    
    Let $e = uv$, and $p_1^u,p_1^v$ be the neighbors of $p_1$ closer to $u,v$ on $\eta(e)$ respectively.
    Let $f = wz$, and $p_k^w,p_k^z$ be the neighbors of $p_k$ closer to $w,z$ on $\eta(f)$ respectively.

    In both cases, the new vertices are given bags $\eta'(x) = p_1 \cup \eta(e)[p_1^u,p_1^v]$ and $\eta'(y) = p_k \cup \eta(f)[p_k^w,p_k^z]$, and we let $\eta'(g') = P$.
    The remaining new edges are defined as follows.
    \begin{itemize}[nosep]
        \item If $e=f$, we split $\eta(e)$ into $\eta'(ux) = \eta(e)[\eta(e,u),p_1^u]$; $\eta'(g) = \eta(e)[p_1^v,p_k^w]$ and $\eta'(y v)=\eta(e)[p_k^z,\eta(e,v)]$.
        \item If $e \neq f$, we split $\eta(e)$ into $\eta'(ux) = \eta(e)[\eta(e,u),p_1^u]$ and $\eta'(x v) = \eta(e)[p_1^v,\eta(e,v)]$; and symmetrically split $\eta(f)$ into $\eta'(w y) = \eta(f)[\eta(f,w),p_k^w]$ and $\eta'(y z) = \eta(f)[p_k^z,\eta(f,z)]$.
    \end{itemize}
    All remaining vertices and edges of $H'$ are given the same bags as in $\mH$. Then, we let $R'$ be the union of $R$, $\eta'(x) \cup \eta'(y)$, and the length $t+3$ prefixes of $\eta'(e')$ from $x$ and $y$ for every $e' \in E(H') \setminus E(H)$.
\end{definition}

\begin{figure}
  \centering
  \includegraphics[scale=1.4]{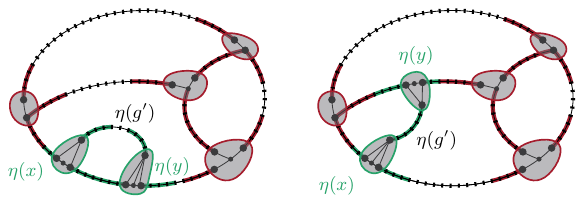}
  \caption{Adding an ear to a short $t$-model, creating vertices $x$ and $y$ and an edge $g'$ between them. Shown here are the two possible cases of ear addition: with endpoints on the same edge of $H$
  (left) and with endpoints on two distinct edges (right). The grey blobs depict vertex bags and the highlighted
  parts of paths depict vertices that are mandatorily in the guard set $R$.}\label{fig:ear-addition}
\end{figure}

It is easy to verify that adding an ear to a short $t$-model according to~\Cref{def:ear-addition} indeed yields a (non-necessarily short) $t$-model.
Furthermore, with a bit more tedious work, we prove
that adding a \emph{shortest} ear (in a particular sense, stated below)
preserves shortness of the model.
\onlysoda{The proofs of the next two lemmata are postponed to Section~\ref{sec:models-later}.}

\begin{restatable}{lemma}{lemmodeladdear}\label{lem:model-add-ear}
  Adding an ear to a short $t$-model produces a $t$-model.
\end{restatable}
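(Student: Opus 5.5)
We verify the five conditions of \cref{def:model} for $\mH'=(H',\eta')$, using that $\mH$ is a short $t$-model and $P$ is an ear of distance at least $t+3$. We treat the case $e\neq f$ in full; the case $e=f$ differs only in that $\eta(e)$ is cut at two places. For the latter we must know that the two cut windows $\eta(e)[p_1^u,p_1^v]$ and $\eta(e)[p_k^w,p_k^z]$ are far apart along $\eta(e)$. This follows from the distance bound: these windows lie within distance $1$ of $N(p_1)\cap\mH$ and $N(p_k)\cap\mH$, respectively, so they are at distance at least $t+1$ from each other.

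\textbf{Bags and paths.} First, by the second bullet of \cref{def:short-model}, since $p_1\notin N[R]$, the set $N(p_1)\cap\mH$ lies in a $3$-vertex subpath of $\eta(e)$. Hence $\eta(e)[p_1^u,p_1^v]$ is a subpath of length at most $2$ containing all neighbours of $p_1$ in $\mH$. Therefore $\eta'(x)$ is connected (of size at most $4$), and similarly for $\eta'(y)$.

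Moreover, $p_1\notin N[R]$ means the window avoids $R$. In particular it avoids the $(t+3)$-prefixes of $\eta(e)$ and all vertex bags. So the split pieces $\eta'(ux)=\eta(e)[\eta(e,u),p_1^u]$ and $\eta'(xv)$ each contain a whole $(t+3)$-prefix, and thus have length more than $t$. They also keep their old endpoint in $\eta(u)$, resp.\ $\eta(v)$, and get a new endpoint $p_1^u\in\eta'(x)$, resp.\ $p_1^v\in\eta'(x)$.

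The new edge bag $\eta'(g')=P$ is an induced path with endpoints $p_1\in\eta'(x)$ and $p_k\in\eta'(y)$. Its length is more than $t$ by the third bullet of shortness, since its distance is at least $t+3$. This gives (i) for the new bags and (ii) for all new edges.

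\textbf{Non-touching.} We go through the pairs.
\begin{itemize}
\item Old vertex bags versus $\eta'(x),\eta'(y)$: the window vertices are interior vertices of $\eta(e)$ far from its ends. By conditions (iii)--(iv) for $\mH$ they do not touch $\eta(w)$ for $w\notin e$, nor $\eta(u)$ or $\eta(v)$ except at $\eta(e,u)$ and $\eta(e,v)$, which are at distance more than $t+3$.
\item The vertex $p_1$: it lies outside $N[R]\supseteq N[\eta(w)]$ for every vertex $w$ of $H$.
\item $\eta'(x)$ versus $\eta'(y)$: the windows are at distance at least $t+1>1$ in $G[V(\mH)]$. Also $p_1$ is not adjacent to $p_k$ or to the $f$-window, and $p_k$ is not adjacent to the $e$-window. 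Both facts hold because $N(p_1)\cap\mH$ and $N(p_k)\cap\mH$ are far apart, and because $P$ is induced with $k\ge t+2$ vertices, so $p_1\ne p_k$ and $p_1p_k\notin E(G)$.
\item Conditions (iii)--(v) for the split pieces of $\eta(e)$ and $\eta(f)$ among themselves: these are consecutive subpaths of the induced path $\eta(e)$, separated by the window. Together with the old conditions between $\eta(e)$ and other edges, this gives the claim.
\item The ear $P$ against everything: its internal vertices lie outside $N[\mH]$, so they touch nothing of $V(\mH)$. The endpoint $p_1$ has all its $\mH$-neighbours inside $\eta'(x)$, which is exactly what (iii) allows for $g'$ at $x$. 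It touches no other edge bag (in particular not $\eta'(ux)$ or $\eta'(xv)$), and no vertex bag other than $\eta'(x)$. Symmetrically for $p_k$.
\end{itemize}
Condition (iii) for the split pieces at $x$ also requires care at the boundary: $\eta'(ux)\setminus\{p_1^u\}$ must not touch $\eta'(x)\setminus\{p_1^u\}$. This holds because $\eta(e)$ is induced, so the only neighbour of $p_1^u$'s predecessor in the window is $p_1^u$ itself, and that predecessor is not adjacent to $p_1$ since $p_1$'s neighbours are confined to the window.

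\textbf{Expected obstacle.} The main burden is bookkeeping rather than any one idea. The delicate points are the following.
\begin{itemize}
\item Exploiting that the window $\eta(e)[p_1^u,p_1^v]$ contains \emph{all} $\mH$-neighbours of $p_1$; here we must handle the degenerate case where $N(p_1)\cap\eta(e)$ is a single vertex, so that $p_1^u,p_1^v$ are its path-neighbours.
\item In the case $e=f$, or when $e,f$ are parallel or share endpoints, ensuring the two windows are disjoint and anti-adjacent and that the middle piece $\eta'(g)$ has length more than $t$. This uses the distance-$(t+3)$ assumption together with the $R$-prefixes.
\end{itemize}
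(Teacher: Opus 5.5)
Your proposal is correct and follows the same route as the paper, which also verifies conditions (i)--(v) of \cref{def:model} directly. Both arguments get the length of $P$ from shortness, the lengths of the split pieces from the $(t+3)$-prefixes in $R$, and the length of $\eta'(g)$ (when $e=f$) from the distance bound, and both obtain all non-touching either by inheritance from $\mH$ or from $N(p_1)\cap\mH$ lying in the window; your version is more careful than the paper's terse one. One small slip: in the definition, $p_1^u$ and $p_1^v$ are themselves neighbours of $p_1$ (coinciding when $p_1$ has a single neighbour on $\eta(e)$), so $p_1$ does touch $\eta'(ux)$ and $\eta'(xv)$, but only via $p_1p_1^u$ and $p_1p_1^v$ between the ends at $x$, which is exactly what condition (v) permits.
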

\onlyarxiv{\begin{proof}
\begin{enumerate}[nosep, label=(\roman*)]
    \item In both cases, $\eta'(x)$ and $\eta'(y)$ are connected, they do not touch, and do not touch any previous vertex bag, since those are contained in $R$.
    \item Since $P$ is of distance at least $t+3$ on $\mH$, 
    \begin{enumerate}[nosep]
      \item in the case $e=f$, $\eta'(g)$ is of length at least $t+3$, and 
      \item in both cases, because $\mH$ is short, $\eta'(g')$ is of length more than $t$. 
    \end{enumerate}
    \item All other new edge bags contain paths of length at least $t+3$, 
    because $R$ contains the length $t+3$ prefixes of $\eta(e)$ and $\eta(f)$.
    \item The fact that new edge bags touch $\eta'(x),\eta'(y)$ only through (one of) their endpoints is by definition. The relation to preserved vertex bags is inherited from $\eta(e)$ and $\eta(f)$.
    \item All incidences between new edges are sound by construction, $\eta'(g')$ and $\eta'(g)$ do not touch other edge bags, and remaining new edge bags inherit their incidences from $\eta(e)$ and $\eta(f)$.
\end{enumerate}
\end{proof}}
\begin{restatable}{lemma}{lemshortmodelpreserved}\label{lem:short-model-preserved}
  Given a graph $G$ and a short $t$-model $\mH = (H,\eta,R)$ in $G$,
  the following operations, when applicable produce a short $t$-model:
  \begin{itemize}[itemsep=0px]
    \item Adding a shortest ear of distance at least $t+3$ on $\mH$.
    \item Adding a shortest ear between two distinct edges of $\mH$.
    \item Adding a shortest ear between an edge of $A$ and an edge of $B$
      for a partition $E(H) = A \uplus B$.
  \end{itemize}
  Furthermore, one can perform each of these operations (in particular, find the corresponding ear)
  in polynomial time, given $G$, $t$, and $\mH$.
\end{restatable}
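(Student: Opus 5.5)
By \cref{lem:model-add-ear}, each of the three operations already produces a $t$-model $\mH'=(H',\eta',R')$, and by construction every vertex bag of $H'$ has at most $4$ vertices and lies in $R'$. New edge bags get their $(t+3)$-prefixes at $x$ and $y$ put into $R'$. The prefixes at old vertices are inherited from $R$, because $\eta'(ux)$ starts with $\eta_{t+3}(e,u)$. Moreover $R\subseteq R'$, and $V(\mH)\subseteq V(\mH')$ with $V(\mH')\setminus V(\mH)=P$. So it remains to verify the second and third conditions of \cref{def:short-model} for $\mH'$, and the plan is to use the minimality of the added ear $P$ for both.

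\emph{Neighbourhood condition.} Take $w\in N(\mH')\setminus N[R']$. If $w\notin N(P)$, then $N(w)\cap\mH'=N(w)\cap\mH$. This set lies in a $3$-vertex subpath of some $\eta(e'')$ by shortness of $\mH$. That subpath avoids the new guards, so after the split it still lies inside a single new edge bag. If $w\in N(P)$, then $w$ is not adjacent to $p_1,p_2,p_k,p_{k-1}$, because the ends of $P$ belong to $R'$ (they are in $\eta'(x),\eta'(y)$, and the $(t+3)$-prefixes of $\eta'(g')$ lie in $R'$). Suppose $w$ has two neighbours $p_i,p_j$ on $P$ with $j-i\ge 3$. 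Then $p_1\ldots p_i\,w\,p_j\ldots p_k$ contains a strictly shorter ear with the same endpoints, hence of the same distance and between the same edges, contradicting minimality. The only exception is when $w\in N[\mH]$, i.e.\ $w$ is adjacent to both $P$ and $\mH$. In that case $w\notin N[R]$, so $N(w)\cap\mH$ lies near some old edge. Then one of $p_1\ldots p_i w$ or $w p_j\ldots p_k$ is an ear of $\mH$, and I will show it is shorter and still eligible, which contradicts the choice of $P$. Eligibility holds because at least one of its two distance bounds is $\ge t+3$, using the fact that $N(p_1)\cap\mH$ and $N(p_k)\cap\mH$ are far apart and the triangle inequality in $G[V(\mH)]$. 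For the second and third operation, eligibility instead follows from the endpoints lying on different edges, or on $A$ versus $B$. The case where $w$ has neighbours both on $P$ and on an old edge bag is handled the same way.

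\emph{Ear-length condition.} Let $Q$ be an ear of $\mH'$ of distance $\ge t+3$, and suppose it is short, i.e.\ of length $\le t$. If $Q$ avoids $N(P)$, then $Q$ is an ear of $\mH$. Its distance in $\mH$ is at least its distance in $\mH'$ minus nothing, because $\mH$ is a subgraph of $\mH'$ and so distances in $G[V(\mH)]$ are at least those in $G[V(\mH')]$. Shortness of $\mH$ then gives length $>t$, a contradiction. If $Q$ touches $P$, I combine $Q$ with a subpath of $P$ to get an ear $Q'$ of $\mH$ with the same endpoint type as $P$ (same distance class, or same pair of edges, or crossing $A/B$). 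If $Q$ ends on $P$, the combination is $Q$ together with the part of $P$ from the end of $Q$ to the far endpoint of $P$, chosen suitably. The aim is to show $Q'$ is shorter than $P$, using $|Q|\le t$ and the distance $\ge t+3$ of $Q$ along $\eta'(g')$. This contradicts minimality.

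This case analysis is the main obstacle. One must choose which end of $P$ to keep so that the new ear is both shorter and still eligible, and there are many subcases depending on whether the endpoints of $Q$ lie on $P$, on old edges, or next to the new bags. I would first treat the operation based on distance $\ge t+3$ via the triangle-inequality argument, and then observe that the other two operations only need "different edges / across the partition", which survives the rerouting even more easily.

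\emph{Running time.} For each pair of candidate endpoint sets, and each pair of edges or pair of vertices of $N(\mH)\setminus N[R]$, compute a BFS shortest path in $G-N[\mH]$ plus the endpoints. Then take the minimum over eligible pairs. This runs in polynomial time.
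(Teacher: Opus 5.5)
Your overall strategy is the paper's: assume $\mH'$ violates shortness, and splice the offending vertex $w$ (or the short ear $Q$) with a piece of $P$ to obtain an eligible ear of $\mH$ that is strictly shorter than $P$. The step that fails is your justification of eligibility for the first operation, when the spliced ear ends at a vertex attached to an old edge bag. Write $A=N(p_1)\cap\mH$, $B=N(p_k)\cap\mH$ and $Z=N(w)\cap\mH$. Since $Z$ has diameter at most $2$, the triangle inequality in $G[V(\mH)]$ gives only $d(A,Z)+d(Z,B)\ge t+1$. So the better of $p_1\dots p_i w$ and $w p_j\dots p_k$ is guaranteed distance about $(t+1)/2$, not $t+3$. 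You plan to reuse this ``triangle-inequality argument'' for the ear-length condition, so that part inherits the defect. Moreover, the cases where $Q$ touches $P$ are only stated as a goal (``this case analysis is the main obstacle'') and are never actually carried out.

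The missing ingredient is the guard set $R'$; you use it only to keep $w$ off $p_1,p_2,p_{k-1},p_k$. Since $w\notin N[R']$ (resp.\ $q_m\notin N[R']$), the set $Z$ avoids $R'$. And $R'$ contains $R$, both new vertex bags, and the $(t+3)$-prefixes at $x$ and at $y$ of every new edge bag. Internal vertices of an edge bag touch only their neighbours on that path. Hence every path in $G[V(\mH)]$ from $A\subseteq\eta'(x)$, or from $B\subseteq\eta'(y)$, to a vertex outside $R'$ has length at least $t+3$. So \emph{both} truncations are eligible for the first operation. For the second and third operations you choose the end according to the edge containing $Z$, as you correctly say.

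The same fact gives the length saving. The attachment point of $w$ (or of $q_1$) on $P$ lies outside the $(t+3)$-prefixes of $P$ at both ends. So the discarded part of $P$ has length at least $t+4$, more than the $1$ (resp.\ $|Q|+1\le t+1$) you add.

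With this, the ear-length case is short:
\begin{itemize}
\item By the neighbourhood condition, no endpoint of $Q$ sees both $P$ and $V(\mH)$.
\item If neither endpoint sees $P$, then $Q$ is an ear of $\mH$ of no smaller distance.
\item If both endpoints see only $P$, replace $P[x,y]$ (of length $\ge t+3$ by the distance of $Q$) with $x\,Q\,y$ (of length $\le t+2$), keeping the endpoints $p_1,p_k$.
\item If exactly one endpoint sees only $P$, take $P[p_1,x]\cdot Q$ or its mirror image ending at $p_k$, chosen as above.
\end{itemize}
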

\onlyarxiv{\begin{proof}
    Observe that the algorithmic claim is immediate: it suffices to iterate over 
    all choices $x,y$ of the endpoints of the sought ear and find a shortest path
    from $x$ to $y$ in $G-(N[V(\mH)] \setminus \{x,y\})$.

    Note that the second bullet follows from the third, as if $P$ is a shortest ear
    between two distinct edges of $\mH$, and $P$ is an ear between $e$ and $f$, $e \neq f$,
    then in particular $P$ is a shortest ear between $A \coloneqq \{e\}$ and $B \coloneqq E(H) \setminus \{e\}$. 
    In what follows, we prove the first and the last bullet at once, with minor details between these cases highlighted along the way.

    Let $\mH' = (H',\eta',R')$ be the $t$-model obtained by adding ear $P = p_1,\ldots,p_k$ between edges $e,f \in E(H)$ (that is, $N(p_1) \cap \mH \subseteq \eta(e)$ and $N(p_k) \cap \mH \subseteq \eta(f)$).
    If $P$ was taken as shortest across a partition $A \uplus B$, then assume w.l.o.g that $e \in A, f \in B$.
    Let $g' \in E(H')$ be the new edge with $\eta'(g') = P$, such that all other (new) edge bags are contained in $V(\mH)$.
    Note already that the first item of~\Cref{def:short-model} is satisfied. Indeed, new vertex bags each contain an endpoint of $P$, and the path on at most three vertices containing its neighbors, so their bag is of size at most four. Then, the containment in $R$ of these bags, along with length $t+3$ prefixes of each edge bag is given by definition.

    Assume for the sake of contradiction that $\mH'$ is not short.
    We will reach a contradiction by exhibiting a shorter ear than $P$ (between $A$ and $B$ in the second case).

    Let us first consider the case where for some $v \in N(\mH') \setminus N[R']$, $N(v) \cap \mH'$ is not contained in a three-vertex subpath of $H'$.
    Note that $N(v) \cap \mH'$ cannot be contained in $V(\mH)$, since $\mH$ was short and $R'$ contains prefixes at the ends of all new edges, so $N(v) \cap \mH'$ must intersect $P$.
    \begin{itemize}
      \item If $N(v) \cap \mH'$ is contained in $P$, then $v \notin N[\mH]$, and we let $x,y$ be the neighbors closest to $p_1,p_k$ respectively. Then, the path $P' = P[p_1,x] \cdot v \cdot P[y,p_k]$ is an ear of $\mH$, still between $p_1,p_k$, thus of distance at least $t+3$. It is of length strictly shorter than $P$, since $x,y$ are at distance at least $3$ on $P$, which contradicts $P$ being shortest (in particular across $A,B$).
      \item Otherwise, $N(v) \cap \mH'$ intersects $P = \eta'(g')$ as well as a single edge bag $\eta(h)$ for some $h \in E(H)$.
      In the case where $P$ was taken to be shortest across $(A,B)$, recall $e \in A$, and assume here without loss of generality that $h \in B$ (up to swapping $e$ and $f$).
      Take then $x \in P$ to be the neighbor of $v$ closest to $p_1$ on $P$, and let $P' = P[p_1,x] \cdot v$.
      Since $v \notin N[R']$, while $R'$ contains prefixes of length $t+3$ of all new edges,
      $P'$ is an ear of distance at least $t+3$.
      However, $P'$ is of length strictly shorter than $P$, since $P[x,p_k]$ contains a length $t+3$ prefix of $P$ (by definition of $R'$). This contradicts $P$ being the shortest.
    \end{itemize}

    We may now assume that, for any $v \in N(\mH') \setminus N[R']$, $N(v) \cap \mH'$ is contained in a three-vertex subpath of $\mH'$.
    Then, if $\mH'$ is not short, it is because of an ear $Q = q_1,\ldots,q_m$ of $\mH'$ that is of distance at least $t+3$ (with respect to $\mH'$) but 
    of length at most $t$ (i.e., $m \leq t + 1$).
    The proof follows very similar lines as the previous paragraph.
    Since $\mH$ was short, $Q$ cannot be an ear of $\mH$, so we may assume that for one of its endpoints, say $q_1$, $N(q_1) \cap \mH' \subseteq P$.
    Furthermore, the last paragraph ensures that both $N(q_1) \cap \mH'$ and $N(q_m) \cap \mH'$ are contained in a three-vertex subpath of (some edge bag) of $\mH'$.
    \begin{itemize}
      \item If $N(q_m) \cap \mH'$ is also contained in $P = \eta(g')$, let $x,y$ be their (respective) neighbors closest to $p_1,p_k$ respectively.
      Then, $P' = P[p_1,x] \cdot Q \cdot P[y,p_k]$ is still an ear of $\mH$ between $p_1,p_k$, thus of distance at least $t+3$.
      It is of length strictly shorter than $P$, since $x \cdot Q \cdot y$ is of length less than $t+3$, and $P[x,y]$ is of length at least $t+3$ by assumption on the distance of $Q$. This contradicts $P$ being the shortest.
      \item If $N(q_m) \cap \mH'$ is not contained in $P$, let $\eta(h)$ be the edge bag of $\mH$ containing it. Again, up to swapping $e$ and $f$, in the third bullet case, we may assume that $h \in B$.
      Take then $x$ to be the neighbor of $q_1$ closest to $p_1$ on $P$ (note that $x \notin R'$), and let $P' = P[p_1,x] \cdot Q$.
      As before, all neighbors of $q_m$ on $\mH'$ are at distance at least $t+3$ from $N(p_1) \cap \mH$.
      Then, $P'$ is an ear of $\mH$ between $p_1$ and $q_m$, thus of distance at least $t+3$. But $P'$ is of length strictly shorter than $P$, since $P[x,p_k]$ contains a length $t+3$ prefix of $P$ and $x \cdot Q$ is of length less than $t+2$. This contradicts $P$ being the shortest. \qedhere
    \end{itemize} 
\end{proof}
}

\begin{definition}[Ear-decomposition]\label{def:ear-decomposition-def}
    Given a short $t$-model $\mH_0$, a \emph{$t$-ear-decomposition} starting from $\mH_0$ is a sequence of models $\mH_0, \mH_1, \ldots , \mH_{\ell}$ such that for any $i \leq \ell-1$, $\mH_{i+1}$ is obtained from $\mH_i = (H_i,\eta_i,R_i)$ by adding an ear at distance at least $t+3$.

    We refer to indices $0 \leq i < \ell$ as \emph{steps}. A step is \emph{proper} if the ear added to 
    obtain $\mH_{i+1}$ from $\mH_i$ is between two distinct edges of $H$. 
\end{definition}

\subsection{Properties}
In our applications, the starting short $t$-model $\mH_0$ will be 
of constant size. A step increases the number of degree-$3$ vertices of $H$
by $2$, and does not change the number of vertices of degree at most $2$. 
A step can decrease the number of parallel edges by subdividing some of them.
Furthermore, a proper step
does not add parallel edges, while an improper step adds two parallel edges.
Finally, each ear addition adds at most $2 (3 + (t+3)) = 2t + 12$ vertices to $R$.
Thus, we have the following observation.
\begin{observation}\label{obs:ear-decomp-mostly-cubic}
  Let $\mH_0, \ldots, \mH_\ell$ be a $t$-ear-decomposition that contains $\ell'$ improper steps
  and $\mH_0 = (H_0,\eta_0,R_0)$.
  Then, $\mH_\ell$ contains at most $|V(H_0)|$ vertices of degree at most $2$, 
  at most $|E(H_0)| + 2\ell'$ parallel edges, and at least $2\ell$ vertices of degree $3$.
  Furthermore, $|R| \leq |R_0| + (2t+12)\ell$.
\end{observation}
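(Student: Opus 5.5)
The plan is to prove Observation~\ref{obs:ear-decomp-mostly-cubic} by induction on the number of steps $\ell$, tracking four quantities of $H_i$ along the ear-decomposition: the number of vertices of degree at most $2$, the number of parallel edges, the number of degree-$3$ vertices, and $|R_i|$. The base case $\ell=0$ is immediate for three of the claims. The degree-$3$ claim holds vacuously at $\ell=0$ because the bound is $0$. So everything reduces to analysing a single ear addition as in \cref{def:ear-addition}.

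For one step, I will do the local degree bookkeeping directly from \cref{def:ear-addition}.
\begin{itemize}
\item In both cases, $H'$ arises from $H$ by subdividing one or two edges, which creates the new vertices $x,y$, and then adding one edge $g'$ incident to both $x$ and $y$.
\item Subdividing an edge leaves the degrees of the old endpoints unchanged. This remains true for a loop $e=vv$: replacing it by the cycle $u x y u$ keeps $\deg(u)=2$ contributed by that edge.
\item The new vertices $x$ and $y$ have degree $2$ from the subdivision, and adding $g'$ raises each of them to exactly $3$.
\end{itemize}
Hence no old degree changes, and exactly two new vertices of degree $3$ appear. This gives the claim that the number of vertices of degree at most $2$ never increases, and that the number of degree-$3$ vertices grows by exactly $2$ per step, for a total of at least $2\ell$.

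For parallel edges, I will count edges that belong to some parallel class. Subdividing an edge can only remove it from its parallel class, since its replacement edges now have a new endpoint. In a proper step, $g'=xy$ joins two freshly created vertices $x \neq y$. Since $e \neq f$, the only edges between $x$ and $y$ would come from a single subdivided edge, and here none exists. So $g'$ is simple and the count does not increase. In an improper step, $g$ and $g'$ both join $x$ and $y$, which adds at most $2$ parallel edges. Note that if $e$ was a loop, the cycle $uxyu$ still gives exactly the pair $g,g'$ between $x$ and $y$. Summing over the steps gives at most $|E(H_0)|+2\ell'$.

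For the guard set, $R'$ adds the following to $R$:
\begin{itemize}
\item the bags $\eta'(x)$ and $\eta'(y)$, which have size at most $4$ each;
\item length-$(t+3)$ prefixes, from $x$ and from $y$, of the new edge bags.
\end{itemize}
The main obstacle is counting these prefixes carefully. The vertices $p_1^u$ and $p_1^v$ of the prefixes are already in $\eta'(x)$, and the bags, excluding $p_1$, lie within $3$-vertex subpaths of old edges. Grouping the additions per endpoint, each of $x$ and $y$ contributes its bag of at most $3+1$ vertices plus at most $t+3$ further vertices of prefix beyond the bag that are not already counted. This gives at most $2(3+(t+3))=2t+12$ new guards per step, which I will justify by noting that the prefixes along the split old edge bags overlap the guarded bag vertices. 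If this per-endpoint charging fails, I would fall back on noting that only a constant-times-$t$ bound is needed downstream. The claimed constant follows from the same charging.
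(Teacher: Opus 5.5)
\textbf{The approach matches the paper's, but the guard bound has a gap.} Your plan is the same per-step bookkeeping the paper uses (the paragraph just before the observation), and your degree counts are fine. The gap is in the guard count, and no charging can close it, because the bound $(2t+12)\ell$ is false as stated.

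\textbf{Why the charging undercounts.} At $x$, three new edges meet: $ux$, $xv$, $g'$ in a proper step, or $ux$, $g$, $g'$ in an improper one. By the definition of $R'$ (and of shortness), $R'$ must contain a length-$(t+3)$ prefix from $x$ of each of them.
\begin{itemize}
\item Only the first vertex of each prefix ($p_1^u$, $p_1^v$, resp.\ $p_1$) lies in $\eta'(x)$.
\item The remaining $t+3$ vertices of each prefix run in three different directions: towards $u$ and towards $v$ along $\eta(e)$, and along $P$. So they are pairwise disjoint.
\item Nothing places them in $R$ beforehand; $p_1\notin N[R]$ only gives $p_1^u,p_1^v\notin R$.
\end{itemize}
The same holds at $y$. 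So one step can add up to $|\eta'(x)|+|\eta'(y)|+6(t+3)\le 6t+26$ guards.

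\textbf{A counterexample.} Take a short model of $\ThetaZero$ with very long edge bags. Add a long ear (length at least $2t+7$) whose two ends each have a single neighbour, in the middle of two different edge bags far from $R_0$. This step adds exactly $6t+22>2t+12$ new guards.

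Your charging ``at most $t+3$ further vertices per endpoint'' keeps only one of the three prefixes. Even on its own terms, $4+(t+3)$ per endpoint gives $2t+14$, not $2(3+(t+3))$. The paper's count $2(3+(t+3))$ makes the same omission.

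\textbf{What you can prove, and the fix.} Your bookkeeping does prove $|R_\ell|\le |R_0|+(6t+26)\ell$. Your fallback remark is the right repair: the proofs of \cref{thm:EP-cycles-algo} and \cref{thm:EP-3LPC-algo} only need $\Oh(t)$ new guards per step. So it suffices to enlarge $\lambda$, e.g.\ replace the estimate $2t+12\le 14t$ by $6t+26\le 32t$.

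\textbf{A smaller issue with loops.} You claim that subdividing an edge can only take it out of its parallel class. This fails for a loop $e=uu$ subdivided in a proper step, which yields two parallel edges between $u$ and $x$. For example, a proper step between two loops at the same vertex creates four parallel edges while $\ell'=0$ and $|E(H_0)|=2$. This is harmless in the paper's uses: $H_0$ is either a single loop, whose first step is necessarily improper and after which no loops remain, or loopless. As stated, though, the parallel-edge bound needs $H_0$ loopless, or $2|E(H_0)|$ in place of $|E(H_0)|$.
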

Thus, if we start with a constant-size graph and generally avoid improper steps, we end with a graph
that is ``mostly'' simple and cubic. The lower bound on the number of degree-$3$ vertices
suffices to establish that $\mH_\ell$ contains many vertex-disjoint cycles thanks to \cref{thm:simonovits},
but vertices of lower degree and parallel edges count negatively if we want to extract thetas
and we need to control the number of them.

\medskip

A short $t$-model $\mH=(H,\eta,R)$ is \emph{maximal} if no ear at distance at least $t+3$ can be added to $\mH$,
and \emph{proper-maximal} if no ear between two distinct edges of $H$ can be added to $\mH$. 
\begin{lemma}\label{lem:max-model-no-inter-ear}
  Let $\mH = (H,\eta,R)$ be a proper-maximal short $t$-model. 
  Then, every connected component $D$ of $G-N[R]$ intersects at most one set $N[\eta(e)]$ for $e \in E(H)$
  and no set $N[\eta(v)]$ for $v \in V(H)$.
\end{lemma}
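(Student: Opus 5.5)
We argue by contradiction, extracting a forbidden ear from a component that behaves badly. First observe that $\eta(v) \subseteq R$ for every $v \in V(H)$, so $N[\eta(v)] \subseteq N[R]$. Hence no component $D$ of $G - N[R]$ can meet $N[\eta(v)]$, which settles the second part of the statement.

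For the first part, suppose a component $D$ of $G-N[R]$ meets both $N[\eta(e)]$ and $N[\eta(f)]$ for distinct $e,f \in E(H)$. Any vertex of $D$ lying in $V(\mH)$ is in $N[\eta(e)]$ for the edge bag containing it, so we reduce to the neighbourhood level. Let $Z = D \cap N[\mH]$, and note that every $z \in Z$ lies outside $N[R]$. We first show $Z \cap V(\mH) = \emptyset$ unless the situation is already a contradiction. If $z \in D \cap V(\mH)$, then $z \in \eta(h)$ is an internal vertex of some edge bag away from $R$. The component $D$ must then leave $V(\mH)$, since it cannot stay inside $\eta(h)$ if it also reaches $N[\eta(e')]$ for $e' \neq h$. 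Edge bags of distinct edges do not touch except near vertex bags, which lie in $R$. So $D$ contains a vertex of $N(\mH) \setminus N[R]$ adjacent to $\eta(h)$. This vertex witnesses membership in $N[\eta(h)]$, and it is the kind of witness we work with below.

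The main step is to classify the vertices of $D' = D \cap N(\mH)$, all of which lie in $N(\mH) \setminus N[R]$. By shortness, each such vertex has its $\mH$-neighbourhood inside a three-vertex subpath of a single edge bag. This gives a labelling $\lambda : D' \to E(H)$. By the argument above, both labels $e$ and $f$ occur, or at least two distinct labels occur among the vertices adjacent to $\mH$.

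Take a shortest path $Q$ in $G[D]$ between a vertex labelled $e$ and a vertex labelled differently. Shrink $Q$ so that its interior avoids $N[\mH]$ entirely: walk along $Q$ and stop at the first vertex of $N(\mH)$ whose label differs from that of the start, restarting at the last vertex carrying the start label. The result is an induced path $p_1,\ldots,p_k$ with $p_1,p_k \in N(\mH)\setminus N[R]$, with interior outside $N[\mH]$ and with endpoints labelled by distinct edges. That is an ear between two distinct edges of $H$. Such an ear can be added by \cref{lem:short-model-preserved}, as noted after \cref{def:short-model}: it has distance at least $t+3$. This contradicts proper-maximality.

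The main obstacle is handling vertices of $D$ that lie inside $V(\mH)$ itself rather than in $N(\mH)$, because ears require their endpoints in $N(\mH)$. The plan is to replace any such vertex by the $N(\mH)$-vertex through which $D$ exits the bag. This uses the fact that edge bags are induced paths whose only contacts with other bags lie in $R$. Hence a walk in $D$ from $\eta(e)$ to another bag must pass through $N(\mH)\setminus N[R]$ with consistent labels.
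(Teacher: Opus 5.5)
Your proposal is correct and follows essentially the paper's route. The paper takes a minimum connected $P\subseteq D$ meeting $N[\eta(e)]$ for two distinct edges and observes that it is an ear between distinct edges. That is exactly what your shortest path between differently labelled vertices of $D\cap N(\mH)$ produces, and your explicit handling of vertices in $D\cap V(\mH)$ is what the paper leaves implicit in minimality. One nit: the announced claim $Z\cap V(\mH)=\emptyset$ is neither proved nor needed. What you actually use is that a run of bag vertices of $D$ stays inside one edge bag $\eta(h)$ and is flanked by $N(\mH)$-vertices labelled $h$, and this is what forces the interior of your shrunk path to avoid $V(\mH)$.
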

\begin{proof}
  The second claim is immediate as 
  for every $v \in V(H)$ we have $\eta(v) \subseteq R$ by the definition of a guarded model.
  For the first claim, assume that $D$ intersects $N[\eta(e)]$ for more than one $e \in E(H)$.
  Let $P \subseteq D$ be minimum such that $G[P]$ is connected and $P$ intersects $N[\eta(e)]$
  for at least two edges $e \in E(H)$. Then, as $\mH$ is short, $P$ is an induced path 
  between a vertex of $N[\eta(e_1)]$ and $N[\eta(e_2)]$ for two distinct edges $e_1,e_2 \in E(H)$.
  Hence, $P$ is an ear between two distinct edges of $H$, which is a contradiction.
\end{proof}

If we look at ears ending at the same edge of $H$, maximality has another important consequence,
but slightly more complicated to state.
An \emph{importance indicator} is a function that rates every induced subgraph of $G$
as ``important'' or ``not important'' such that if $A \subseteq B \subseteq V(G)$ and $G[A]$ is important, then so is $G[B]$.
In later applications, an induced subgraph will be important if it contains the object we hunt for: $C_{\geq t}$ or $\Xi_{\geq t}$.

\begin{lemma}\label{lem:split-one-component-from-model}
  Let $\mH = (H,\eta,R)$ be a maximal short $t$-model, let $D$ be a component of $G-N[R]$
  that contains a vertex of  $N[\eta(e)]$ for some $e \in E(H)$. 
  Assume that we are given some importance indicator in $G$.
  Then, there exists a subpath $P_D$ of $\eta(e)$ with at most $t+5$ vertices such that
  either 
  \begin{enumerate}[itemsep=0pt]
  \item no component $D'$ of $G[D \setminus N[P_D]]$ with $D' \cap N[\eta(e)] \neq \emptyset$ is important; or
  \item at least two components of $G[D \setminus N[P_D]]$ are important.
  \end{enumerate}
\end{lemma}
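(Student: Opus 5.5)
We will exploit maximality: since $\mH$ is maximal, there is no ear of distance at least $t+3$. By \cref{lem:max-model-no-inter-ear} (maximal implies proper-maximal), $D$ touches $N[\eta(e)]$ only for this single edge $e$ and touches no vertex bag. Let $A = D \cap N(\eta(e))$ be the set of attachment vertices of $D$. Each $a \in A$ lies outside $N[R]$, so by shortness $N(a)\cap \mH$ lies in a $3$-vertex subpath of $\eta(e)$, and we can associate to $a$ a position on $\eta(e)$. The key claim we will aim for is that all attachments of $D$ are confined to a short window: any two $a,b \in A$ have neighbourhoods on $\eta(e)$ at distance less than $t+3$. Indeed, otherwise take a shortest path inside $D$ between attachment vertices whose neighbourhoods are at distance at least $t+3$, choosing the pair so that internal vertices have no neighbour in $\mH$; this is an ear of distance at least $t+3$, contradicting maximality. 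For this minimality argument, pick $a,b$ at distance at least $t+3$ and a shortest $a$--$b$ path $Q$ in $G[D]$. If an internal vertex of $Q$ hits $N(\mH)$, its position is at distance at least $(t+3)/2$ from one of $a,b$ and we get a shorter witness pair; repeating this halving loses the threshold, so the argument needs care.

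A cleaner route is to consider the ``span'' of $A$: the minimal subpath $S$ of $\eta(e)$ containing $N(A)\cap\eta(e)$. If $S$ has at least $t+5$ vertices, consider the leftmost and rightmost attachments $a,b$ and the subgraph $G[D]$. Either a path from $a$ to $b$ in $D$ whose internal vertices avoid $N(\mH)$ exists, giving an ear of distance at least $t+3$, or every $a$--$b$ path passes through further attachments. We would then pick consecutive attachments along a shortest path in $D$ and sum distances. The sum of the gaps is at least $t+3$, but each gap could be small, so this does not immediately yield a far ear. This is the \textbf{main obstacle}: attachments can ``walk'' along $\eta(e)$ in small steps. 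It is also why the statement allows the second outcome (two important components) instead of claiming a bounded span.

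We therefore plan a different strategy built around a sweep along $\eta(e)$. For a window $P_D$ of $t+5$ consecutive vertices of $\eta(e)$, removing $N[P_D]$ from $D$ separates attachments to the left of $P_D$ from those to the right. The separation holds because any path in $D$ from a left attachment to a right one avoiding $N[P_D]$, shortened to a minimal such connection, becomes an ear across a gap of at least $t+3$, contradicting maximality. Slide the window: call a position ``left-important'' if some component of $D\setminus N[P_D]$ attached strictly to the left of $P_D$ is important, and similarly for the right. Components of $D\setminus N[P_D]$ touching $N[\eta(e)]$ attach only to one side. If for some window both sides hold, outcome 2 follows, since these components are distinct by the separation. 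If for some window neither side holds, and no important component attaches within the window, outcome 1 follows.

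The remaining case is that every window is exactly one-sided. At the leftmost window nothing lies to its left, so that window must be right-important, and at the rightmost window it must be left-important. Monotonicity of the importance indicator makes left-importance monotone in the sweep: moving right only enlarges left components, up to contained subsets. So there is a first window that is left-important, and the previous window is right-important. Comparing these adjacent windows, which differ by a shift of one, we then argue via monotonicity that one of them is two-sided or empty, yielding the required $P_D$. Making this transition step precise, handling that the components change as the window shifts, is the delicate bookkeeping we expect to spend the most effort on.
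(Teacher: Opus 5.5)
\paragraph{Where your plan breaks.} Your sweep is essentially the paper's strategy: slide a window $W$ along $\eta(e)=p_1\ldots p_m$, use maximality to show that no component of $D\setminus N[W]$ meets both the closed neighbourhood of the part of $\eta(e)$ left of $W$ and that of the part right of $W$, note that the extreme windows are forced to be right-sided and left-sided, and find a transition between adjacent windows $W,W'$. The gap is the last step. Your claim that one of $W,W'$ is itself two-sided or empty is false. Moreover, with windows of exactly $t+5$ vertices, possibly no window works at all.

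Here is a counterexample. Let $G$ be a long hole and $\mH$ the one-loop model from the cycle initialisation. Then $N(\mH)=\emptyset$, so $\mH$ is trivially maximal, and $D$ is the arc of $\eta(e)$ outside $N[R]$. In general $D$ contains a segment of $\eta(e)$ itself, not just attachment vertices. Declare $G[A]$ important iff $A$ contains $p_a$ or $p_{a+t+7}$, with both vertices deep inside $D$; this indicator is monotone. For $W=p_j\ldots p_{j+t+4}$ we have $N[W]=p_{j-1}\ldots p_{j+t+5}$.
\begin{itemize}
\item If $j\le a+1$, the right arc contains $p_{a+t+7}$ and the left arc contains neither marked vertex.
\item If $j\ge a+2$, the left arc contains $p_a$ and the right arc contains neither.
\end{itemize}
So every window, including the transition pair $j=a+1,a+2$, has exactly one important component, and it meets $N[\eta(e)]$. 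Neither outcome holds, and no monotonicity argument about $W$ or $W'$ alone can fix this.

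\paragraph{The missing idea.} Take $P_D:=W\cup W'$, and assume neither $W$ nor $W'$ already works. Let $D'$ be an important component of $D\setminus N[W\cup W']$ meeting $N[\eta(e)]$. By monotonicity, $D'$ lies inside the unique important component for $W$, which meets $N[\eta(e)]$ only in vertices lying on or seeing only $\eta(e)$ to the right of $W$. It also lies inside the unique important component for $W'$, which meets $N[\eta(e)]$ only in vertices lying on or seeing only $\eta(e)$ to the left of $W'$. Since $D'$ avoids $N[W\cup W']$, and vertices outside $N[R]$ see at most a $3$-vertex subpath of $\eta(e)$, no vertex can satisfy both conditions. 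Hence outcome 1 holds for $W\cup W'$.

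To respect $|P_D|\le t+5$, shrink the windows to $t+3$ or $t+4$ vertices; the paper uses $t+3$, giving a union of $t+4$. Separation still holds at these sizes. A minimal crossing connection is then an ear whose endpoints see parts of $\eta(e)$ at distance at least $t+4$, which contradicts maximality.
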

\begin{proof}
  If $D$ is not important, then we are done with $P_D = \emptyset$, so assume otherwise.

  Let $P = p_1,\ldots,p_m$ be the path of $G$ induced by $\eta(e)$.
  For each $i$, split $P$ into paths $L_i = p_1,...,p_i$, $M_i = p_i, \ldots p_{i+t+2}$, and $R_i = p_{i+t+2},...,p_m$.
  Each $M_i$ is of length $t+1$.

  We claim that for every $i$, there is no component $D'$ of $G-N[R]-N[M_i]$ that intersects both $N[L_i]$ and $N[R_i]$.
  Indeed, otherwise such a component contains a minimal path $P'$ connecting $N[L_i]$ and $N[R_i]$, 
  which, due to the length of $M_i$, would be an ear of distance at least $t+3$ connecting $e$ with itself. 
  This contradicts the maximality of $\mH$.

  Assume further that for no index $i$, setting $P_D \coloneqq M_i$ satisfies the conditions of the lemma.
  Then, for every $i$ there is a unique component $D_i$ of $G[D \setminus N[M_i]]$ that is both important
  and intersects $N[\eta(e)]$. Due to the claim established in the previous paragraph, $D_i$ intersects exactly
  one of $N[L_i]$ and $N[R_i]$.

  For $i=1$, $L_i \subseteq R$, hence $D_1$ must intersect $N[R_1]$. Symmetrically, for $i=m-(t+2)$, $R_i \subseteq R$
  and hence $D_{m-(t+2)}$ must intersect $N[L_{m-(t+2)}]$. We infer that there exists $1 \leq i < m-(t+2)$
  such that $D_i$ intersects $N[R_i]$ but $D_{i+1}$ intersects $N[L_{i+1}]$. 

  We claim that $P_D \coloneqq M_i \cup M_{i+1} = p_i, \ldots , p_{i+t+3}$ satisfies the conditions
  of the lemma. Assume otherwise: there exists exactly one connected component $D'$ of $G[D \setminus N[P_D]]$ 
  that is important and, furthermore, this component intersects $N[\eta(e)]$.
  Since $M_i \subseteq P_D$ and $D_i$ is the unique important component of $G[D \setminus N[M_i]]$, we have
  $D' \subseteq D_i$, and hence $D' \cap N[\eta(e)] \subseteq N[R_i]$. 
  Symmetrically, since $M_{i+1} \subseteq P_D$ and $D_{i+1}$ 
  is the unique important component of $G[D \setminus N[M_{i+1}]]$, we have
  $D' \subseteq D_{i+1}$, and hence $D' \cap N[\eta(e)] \subseteq N[L_{i+1}]$. 
  However $N[L_{i+1}] \cap N[R_i] \subseteq N[R]$ due to $t \geq 1$ and $\mH$ being short.
  This is the desired contradiction.
\end{proof}

	\section{Algorithmic induced Erd\H{o}s--P\'osa property for long holes}\label{sec:cycles}
  	
\EPCyclesAlgo*
\begin{proof}
    Let $\lsim \geq 1$ be such that 
    $\fsim(k) \leq \lsim k \log(2k)$ for every nonnegative integer $k$,
    where $\fsim(k)$ is the function of \cref{thm:simonovits}.
    In this proof, we use the natural 
    convention $0 \cdot \log 0 = 0$; note that this keeps $x \mapsto x \log (2x)$ continuous 
    on $[0,+\infty)$. The constant $2$ in the logarithm's argument makes the bound correct for $k=1$.

    The algorithm will be recursive; 
    As the second output, it will return $X$ of size bounded by $\lambda kt \log (2k)$ for $\lambda \coloneqq 896(\lsim+1)$.
    We can assume that the input graph $G$ is connected. 
    Otherwise, we call the algorithm for each component independently and return the union of objects found for the components. A standard convexity argument shows that $\sum_{i=1}^p \lambda  k_it \log (2k_i) \leq \lambda kt \log (2k)$ for $k = \sum_{i=1}^p k_i$, so the bound on $|X|$ is preserved.

    Let $G$ and $t$ be given on input. We apply \cref{lem:init-cycle}.
    If no $C_{\geq t}$ is found, we report $k=0$, an empty cycle packing, and $X = \emptyset$.
    If a hole $C$ of length exactly $t$ is found, we recurse on $G-N[C]$, obtaining $k'$, a cycle packing
    $\mathcal{C}'$, and a set $X'$ of size at most $\lambda k' t \log(2k')$.
    We return $k \coloneqq k' + 1$,
    cycle packing $\mathcal{C}' \cup \{C\}$, and $X \coloneqq X' \cup V(C)$; as $|V(C)| \leq t$ and $\lambda \geq 1$,
    clearly $|X| \leq \lambda kt \log (2k)$.
    
    Assume then that a short $t$-model $\mH_0 = (H_0,\eta_0,R_0)$ is returned with $|R_0| \leq 2t+7$
    and $|V(H_0)| = |E(H_0)| = 1$.
    While possible, we add a shortest ear of distance at least $t+3$ to $\mH_0$, obtaining 
    an ear decomposition $\mH_0, \mH_1, \ldots, \mH_\ell$. 
    Note that $\mH_\ell$ is a maximal short $t$-model and, by \cref{obs:ear-decomp-mostly-cubic},
    contains at least $2\ell$ vertices of degree $3$.
    For brevity, denote $\mH = \mH_\ell = (H,\eta,R)$; we have $|R| \leq (2t+7) + \ell(2t+12) < (\ell+1)(2t+12)$.
    
    By \Cref{obs:model-subdivision}, $\mH$ is an induced minor model of a $t$-subdivision of $H$.
    Let $k_\mH^\circ$ be the maximum integer such that $\lsim k_\mH^\circ \log(2k_\mH^\circ) \leq 2\ell$, or
    $k_\mH^\circ = 1$ if $\ell = 0$. We have $k_\mH^\circ \geq 1$ and,
    by \cref{thm:simonovits} (and, for $\ell=0$, by the fact that $H_0$ is a cycle),
    we can find a packing $\mathcal{C}_H$ of $k_\mH^\circ$ vertex-disjoint cycles in $H$.
    By the choice of $k_\mH^\circ$ and the fact that $k_\mH^\circ \geq 1$, 
    \[ 2\ell < \lsim(k_\mH^\circ+1)\log(2(k_\mH^\circ+1)) \leq 2\lsim k_\mH^\circ \log(4k_\mH^\circ) \leq 4\lsim k_\mH^\circ \log(2k_\mH^\circ). \]
    
    We would like to recurse on the connected components of $G-N[R]$ and use 
    in the packing the cycles of $\mathcal{C}_H$ (projected via \cref{cor:cycles-from-model}),
    but these may touch each other, as the components of $G-N[R]$ may intersect
    sets $N[\eta(e)]$ for $e \in E(H)$.
    Due to \cref{lem:max-model-no-inter-ear}, 
    every component $D$ of $G-N[R]$ intersects at most one set $N[\eta(e)]$. 
    We would like to sacrifice cycles of $\mathcal{C}_H$ that use such sets $e$ where $D$ still
    contains a $C_{\geq t}$.
    This decreases the size of the packing $\mathcal{C}_H$, but this will be offset by the fact that
    there will be multiple components $D$ that contain a $C_{\geq t}$ and we will gain by the convexity
    of the function $k \mapsto k \log (2k)$. 

    However, this charging does not work in the corner case where there is a unique component $D$
    of $G-N[R]$ that contains a $C_{\geq t}$ and $D$ intersects $N[\eta(e)]$ for some $e \in E(H)$.
    Note that we can discover that this is the case with \cref{lem:init-cycle}.
    In this case, we use \cref{lem:split-one-component-from-model}
    (for an induced subgraph being important if it contains a $C_{\geq t}$):
     for some set $P_D$ of at most $t+5$
    consecutive vertices of $\eta(e)$, in $G-N[R \cup P_D]$ this corner case no longer appears. 
    As we can detect the corner case in $n^{\Oh(t)}$ time with \cref{lem:init-cycle}, we can check all
    $\Oh(nt)$ candidates for $P_D$. 

    In the end, we obtain a $R' \coloneqq R \cup P_D$ of size less than $(\ell+2)(2t+12)$ 
    such that $\mH' = (H,\eta,R')$ is a maximal short $t$-model.
    Let $D_1,\ldots,D_p$ be the connected components of $G-N[R']$ that contain a $C_{\geq t}$.
    By the exclusion of the corner case we have either $p \geq 2$
    or $\bigcup_{i=1}^p D_i \cap N[V(\mH)] = \emptyset$.
    Let $E' \subseteq E(H)$ be the set of edges $e$ such that $N[\eta(e)] \cap D_i \neq \emptyset$ for
    some $1 \leq i \leq p$. 
    Let $\mathcal{C}_H' \subseteq \mathcal{C}_H$ be the set of those cycles of $\mathcal{C}_H$
    that do not use any edge of $E'$. As $|E'| \leq p$, we have $|\mathcal{C}_H'| \geq k_\mH^\circ - p$.
    By \cref{cor:cycles-from-model}, $\mathcal{C}_H'$ projects
    to a packing $\mathcal{C}_\mH$ of $|\mathcal{C}_H'|$ anti-adjacent $C_{\geq t}$s in $G[V(\mH)]$ that do not use 
    vertices of $\eta(e)$ for $e \in E'$. That is, the cycles of $\mathcal{C}_\mH$ are anti-adjacent
    to the components $D_i$.

    For every $i=1,2,\ldots,p$, we recurse on $G_i \coloneqq G[D_i]$, obtaining $k_i$, $\mathcal{C}_i$,
    and $X_i$. We return 
    \[
    k \coloneqq |\mathcal{C}_\mH| + \sum_{i=1}^p |\mathcal{C}_i|, \quad 
    \mathcal{C} \coloneqq \mathcal{C}_\mH \cup \bigcup_{i=1}^p \mathcal{C}_i, \quad 
    X \coloneqq R' \cup \bigcup_{i=1}^p X_i.
    \]
    It is immediate that $\mathcal{C}$ is a packing of anti-adjacent $C_{\geq t}$s of size $k$
    and that $G-N[X]$ does not contain any $C_{\geq t}$. It remains to bound the size of $X$.

    Recall $\lambda \coloneqq 896(\lsim+1) = 32 \cdot 28 \cdot (\lsim+1)$ and $2\ell \leq 4\lsim k_\mH^\circ \log(2k_\mH^\circ)$. Thus, as $k_\mH^\circ,t \geq 1$,
    \begin{equation}\label{eq:cycles:R}
        |R'| \leq (\ell+2)(2t+12) \leq \left(2\lsim k_\mH^\circ \log (2k_\mH^\circ) + 2\right)(2t+12) \leq \frac{1}{32} \lambda tk_\mH^\circ \log (2k_\mH^\circ). 
    \end{equation}

    Consider first the case $|\mathcal{C}_\mH| \geq k_\mH^\circ/2$. This implies $\sum_{i=1}^p k_i \leq k-k_\mH^\circ/2$.
    Hence,
    \begin{align*}
    |X| &= |R'| + \sum_{i=1}^p |X_i| \leq \frac{1}{32}\lambda tk_\mH^\circ \log(2k_\mH^\circ) + \sum_{i=1}^p \lambda t k_i \log(2k_i)\\
    &\leq t \log(2k) \cdot \left(\frac{1}{32} \lambda k_\mH^\circ + \lambda (k - k_\mH^\circ/2)\right) \leq \lambda tk \log(2k).
    \end{align*}

    Consider now the case $|\mathcal{C}_\mH| < k_\mH^\circ/2$, so in particular $p > k_\mH^\circ/2$ as $|\mathcal{C}_\mH| \geq k_\mH^\circ - p$.
    Without loss of generality, assume $k_1 \geq k_2 \geq \ldots \geq k_p$.
    This implies that for every $1 \leq i \leq p$, we have $k_i \leq k/i$, hence
    $\log(2k_i) \leq \log(2k) - \log(i)$.
    We must have $p \geq 2$ here, as if $p \leq 1$ we have $\bigcup_{i=1}^p D_i \cap N[V(\mH)] = \emptyset$ so $E' = \emptyset$
    and $|\mathcal{C}_\mH| = k_\mH^\circ$.
    We infer:
    \begin{align*}
    |X| &= |R'| + \sum_{i=1}^p |X_i| \leq \frac{1}{32}\lambda tk_\mH^\circ \log(2k_\mH^\circ) + \sum_{i=1}^p \lambda t k_i \log(2k_i)\\
    &\leq \frac{1}{16}\lambda tp \log(4p) + \lambda t \sum_{i=1}^p k_i \left(\log(2k) - \log(i)\right)\\
    &\leq \frac{1}{8}\lambda tp \log(2p) + \lambda tk \log(2k) - \lambda t \sum_{i=2}^p k_i \log(i).
    \end{align*}
    As $p \geq 2$, we have
    \[ \sum_{i=2}^p k_i \log(i) \geq \sum_{i=\lceil \frac{p+1}{2} \rceil}^p \log\left(\left\lceil \frac{p+1}{2} \right\rceil\right) \geq \frac{p}{2} \cdot \frac{1}{4}\log(2p) = \frac{1}{8} p \log(2p).\]
    Thus, we have $|X| \leq \lambda tk \log(2k)$, as desired. 
\end{proof}

\paragraph{Deducing \cref{thm:EP-cycles-algo} from \cref{thm:EP-3LPC-algo}.}
We conclude this section by showing how \cref{thm:EP-cycles-algo} can be deduced
from \cref{thm:EP-3LPC-algo}. Given $G$ and $t$, invoke
\cref{thm:EP-3LPC-algo}, obtaining $k_\Xi$, a packing $\mathcal{C}_\Xi$
of $k_\Xi$ $t$-long 3PCs and a set $X_\Xi$ of size $\Oh(tk_\Xi \log(k_\Xi))$
such that $G-N[X_\Xi]$ has no $\Xi_{\geq t}$. As every $\Xi_{\geq t}$ contains
a $C_{\geq t}$, extract from $\mathcal{C}_\Xi$ a packing $\mathcal{C}^1$
of $k_{\Xi}$ anti-adjacent $C_{\geq t}$s.

We focus on $G' \coloneqq G-N[X_\Xi]$ and we recurse on $G'$ and $t$,
but this time knowing that $G'$ has no $\Xi_{\geq t}$. 
If we obtain an output $k'$, $\mathcal{C}'$, and $X'$, we can return 
the larger of the packings $\mathcal{C}'$ and $\mathcal{C}^1$ (with its size)
and $X_\Xi \cup X'$.
Hence, it suffices to prove the following lemma.
\begin{lemma}\label{lem:ep-cycles-no-xi}
    Given an integer $t$ and a graph $G$ that does not contain a $\Xi_{\geq t}$,
    one can in time $|V(G)|^{\Oh(t)}$ compute an integer $k$,
    a family of $k$ vertex-disjoint anti-adjacent induced cycles,
    each of length at least $t$, and a set $X \subseteq V(G)$
    of size $\Oh(kt)$ such that $G-N[X]$ does not contain
    any induced cycle of length at least $t$.
\end{lemma}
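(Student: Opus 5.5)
We reuse the framework from the proof of \cref{thm:EP-cycles-algo} and show that, in the absence of a $\Xi_{\geq t}$, the ear-decomposition can never take a single step. The recursion then only charges $\Oh(t)$ vertices per cycle found.

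We start by applying \cref{lem:init-cycle}. If $G$ has no $C_{\geq t}$, we return $k=0$ and $X=\emptyset$. If it returns an induced $C_t$ called $C$, we put $V(C)$ into $X$ and recurse on $G-N[C]$. This costs $t$ vertices per cycle found, and the cycle is anti-adjacent to everything found later.

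Otherwise we obtain a short $t$-model $\mH_0=(H_0,\eta_0,R_0)$ of a loop with $|R_0|\le 2t+7$. The key claim is that $\mH_0$ is already maximal. Suppose an ear $P$ of distance at least $t+3$ could be added. By \cref{lem:model-add-ear}, adding it yields a $t$-model $\mH_1$ of the graph $H_1$ consisting of two vertices $x,y$ joined by three parallel edges: $g$, $g'$, and the edge formed by the rest of the loop through $u$. This is a $t$-model of $\ThetaZero$. Its vertex bags have size at most $4$, so inside each of them we can pick the three attachment endpoints. If the three endpoints are pairwise adjacent or equal, they already form a $\Xi_{\geq t}$. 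Otherwise we adjust the ends by a short local argument: extend or shorten the paths inside the bags $\eta(x)=\{p_1\}\cup\eta(e)[p_1^u,p_1^v]$, whose shape is explicit (the vertex $p_1$ plus a path on at most three vertices). Here each edge bag has at least $t+3$ internal vertices, so losing one or two vertices keeps the length above $t$. This extraction of a $\Xi_{\geq t}$ from $\mH_1$ is where the case analysis lives, and I expect it to be the main obstacle. If the explicit shape of the bags is insufficient, I would invoke the first part of \cref{lem:theta-xi}, using the slack in the path lengths. Either way we contradict the hypothesis, so no ear of distance at least $t+3$ exists.

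With $\mH_0$ maximal, we follow the cycle proof. By \cref{lem:max-model-no-inter-ear} (trivial here, since there is one edge), each component of $G-N[R_0]$ touches at most $N[\eta(e)]$. We apply \cref{lem:split-one-component-from-model}, with "important" meaning "contains a $C_{\geq t}$", to each component $D$ meeting $N[\eta(e)]$, and enlarge $R_0$ by the resulting $P_D$. This yields $R'$ of size $\Oh(t)$ with one of two outcomes.

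In the first outcome, no component of $G-N[R']$ that meets $N[\eta(e)]$ contains a $C_{\geq t}$. The hole $\eta(e)\cup\eta(v)$ is then anti-adjacent to every component we recurse on, so we gain one cycle at cost $\Oh(t)$.

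In the second outcome, at least two important components exist. We then sacrifice the hole and recurse on all components. Since there are at least two of them, each containing at least one cycle, the $\Oh(t)$ cost of $R'$ is charged to the extra packing element: the total packing is at least $\sum k_i \ge 2$, while only one "unit" of loss occurred.

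Defining $|X|\le c\,t\,(2k-1)$ for $k\ge1$ makes the induction go through in all cases, giving $|X|=\Oh(kt)$. The running time is $n^{\Oh(t)}$, since each recursion level uses \cref{lem:init-cycle} and polynomially many candidate paths $P_D$, and the recursion tree has polynomially many nodes.
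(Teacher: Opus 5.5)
Your plan is the paper's proof. The paper also shows that $\mH_0$ from \cref{lem:init-cycle} is already maximal because any ear would create a theta in the model, then uses \cref{lem:split-one-component-from-model}, then recurses. Your potential $ct(2k-1)$ is the same count as the paper's: at most $k$ calls that contribute a cycle and at most $k-1$ calls that branch into at least two important components, each paying $\Oh(t)$. One step, however, does not deliver what you claim.

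\textbf{The split-lemma step.} You apply \cref{lem:split-one-component-from-model} to \emph{every} component $D$ of $G-N[R_0]$ meeting $N[\eta(e)]$ and put all the $P_D$ into $R'$. There can be several such components, e.g.\ ones attached only through the vertices of $\eta(e)\cap N(R_0)$. The lemma's guarantee for $D$ concerns only $G[D\setminus N[P_D]]$.
\begin{itemize}
\item Outcome~1 survives further deletions, by monotonicity of importance.
\item Outcome~2 does not. A window $P_{D'}$ chosen for another component $D'$ may extend into the component containing $\eta(e)\setminus N[R_0]$. Its closed neighborhood can then destroy one of the two important components certified for $D$, for instance a long hole of $D$ with a vertex adjacent to $\eta(e)$ close to $N(R_0)$.
\end{itemize}
If that window also destroys the hole of $D'$, you end with exactly one important component, and it meets $N[\eta(e)]$. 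That is the corner case again, and neither of your two outcomes holds.

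The repair is to call the lemma only in that corner case:
\begin{itemize}
\item If $G-N[R_0]$ has no important component, or exactly one and it misses $N[\eta(e)]$, you are in your first outcome with $R'=R_0$.
\item If it has at least two, you are in your second outcome with $R'=R_0$.
\item Only if it has exactly one and it meets $N[\eta(e)]$ do you add a single $P_D$.
\end{itemize}
This is what the paper does, and it makes $|R'|\le (2t+7)+(t+5)$ immediate.

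\textbf{The step you flag as the main obstacle is not one.} $H_1$ is not $\ThetaZero$, but it contains a theta as a subgraph: $x$ and $y$ joined by $g$, $g'$, and the path $x\,u\,y$. \cref{lem:xis-from-model} turns any $t$-model of a graph containing a theta into a $\Xi_{\geq t}$ with no loss of length. It takes shortest paths through the edge bags and a tripod (\cref{lem:make-tripod}) inside each branch bag $\eta'(x),\eta'(y)$. That tripod is exactly your ``local adjustment''.

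Your remark that every edge bag has at least $t+3$ internal vertices is false for $\eta'(g')=P$, whose length is only guaranteed to exceed $t$. So the fallback through the first part of \cref{lem:theta-xi} would not directly have the slack of $2$ it needs. You would have to move $p_1,p_k$ out of the branch sets to gain it. The tripod argument avoids this issue entirely.
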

\begin{proof}
We proceed as in the proof of \cref{thm:EP-cycles-algo} by
invoking \cref{lem:init-cycle}. The critical observation is that 
$\mH_0 = (H_0, \eta_0, R_0)$ will already be maximal, as adding any ear to it would create a
short $t$-model that contains a theta, so the underlying induced subgraph of $G'$
would contain $\Xi_{\geq t}$. 
After a possible application of \cref{lem:split-one-component-from-model}
if there is exactly one component of $G-N[R_0]$ containing a $C_{\geq t}$ and this
component intersects $N_{G}[V(\mH)]$, we obtain a set $R$ of size $\Oh(t)$
such that either (a) at least two components of $G-N[R]$ contain a $C_{\geq t}$, or
(b) at most one component of $G-N[R]$ contains a $C_{\geq t}$, but this component,
if it exists, is disjoint with $N[V(\mH)]$. 

In both cases, we add $R$ to the returned deletion set.
In case (a), we recurse on the components of $G-N[R]$ that contain a $C_{\geq t}$
and return the union of their outputs.
In case (b), we recurse on the unique component of $G-N[R]$ that contains a $C_{\geq t}$
(if exists) and add the $C_{\geq t}$ hidden in $G[V(\mH)]$ to the returned packing.

Let $k$ be the size of the returned cycle packing.
We argue that the returned deletion set $X$ is of size $\Oh(k t)$.
Clearly, every recursion with case (b) adds a cycle to the output packing,
thus there are at most $k$ such recursive calls.
Note that we only recurse on components containing a $C_{\geq t}$ and 
use the returned cycle packing in the output one. 
Thus, the recursion tree has at most $k$ leaves. Consequently,
there are at most $k-1$ recursive calls with case (a). 
Since at each case we have $|R| = \Oh(t)$, the bound $|X| = \Oh(k t)$ follows.
\end{proof}

	\section{Preliminaries}\label{sec:prelims}
	\paragraph{Basic graph notation.}
Given a (multi)graph $G$, we denote the set of vertices of $G$ as $V(G)$ and the set of edges as $E(G)$.
We write $G[A]$ for the subgraph induced by $A \subseteq V(G)$; $G - A$ is a shorthand for $G[V (G) \setminus A]$.
If it does not lead to confusion, we do not distinguish between sets of vertices and subgraphs induced by them. In particular, we identify components and their vertex sets.

An edge is incident to both its endpoints, and a pair of edges is incident if they share an endpoint.
We say that subsets $A,B \subseteq V(G)$ \emph{touch} if they either intersect or are adjacent.
Two sets are \emph{anti-adjacent} if they do not touch.

The \emph{degree} of a vertex $v$ is the number of edges incident to a vertex $v$, where a loop contributes two to the degree of its only incident vertex.
A graph $H$ is called \emph{subcubic} if all its vertices have degree at most $3$. A graph $H$ is called \emph{cubic} if all its vertices have degree $3$.
For a subcubic $G$ we denote by $\dlttwo(G)$ and $\dthree(G)$ the number of vertices of degree at most 2 and exactly $3$, respectively.

The set of neighbors of a vertex $u$ is denoted as $N_G(u)$ (so called \emph{open neighborhood}) and the set $N_G(u)\cup \{u\}$ is denoted as $N_G[v]$ (so called \emph{closed neighborhood}).
For a set of vertices $X$ of $G$, its open neighborhood is $N_G(X)=\bigcup_{v\in X} N_G(v) \setminus X$ and its closed neighborhood is $N_G[X]=\bigcup_{v\in X} N_G(v) \cup X$.

\paragraph{Packings.}

Let $\cH$ be a family of graphs.
An \emph{induced $\cH$-packing} in a graph $G$ is a family of pairwise disjoint and anti-adjacent subsets of $V(G)$, each of which induces a graph from $\cH$.
The size of a largest induced $\cH$-packing in $G$, i.e., the number of sets in the packing, is denoted as $\pack_\cH(G)$.

\paragraph{Tripods.} In a few places, we will need a simple well-known fact that a minimal
induced subgraph connecting three given vertices is a tree or a line graph of a tree. 
We formalize it with the following notion.

\begin{definition}[tripod]
A \emph{tripod} in a graph $G$ is a set of three induced paths that are disjoint and anti-adjacent except for the following:
each path has a designated one endpoint as a start and the second endpoint as an end and we require that the starts are either all equal or all pairwise
adjacent (i.e., form a triangle). The ends of the three paths are called the ends of the tripod.
We explicitly allow the paths in a tripod to be of length $0$.
\end{definition}

It is straightforward to verify (see e.g. \cite[Claim 2]{DBLP:conf/stoc/GartlandLPPR21}) the following:
\begin{lemma}\label{lem:make-tripod}
	Let $G$ be a graph and $x,y,z \in V(G)$ be three distinct vertices
	in the same connected component of $G$.
	Then, any inclusion-wise minimal induced subgraph of $G$ that is connected and contains
	$x$, $y$, and $z$, is a tripod with ends $x$, $y$, $z$.
	Furthermore, such an induced subgraph can be found in polynomial time.
\end{lemma}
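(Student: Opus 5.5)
Let $H$ be an inclusion-wise minimal set $S\subseteq V(G)$ such that $G[S]$ is connected and $x,y,z\in S$. Such an $S$ exists because the three vertices lie in one component. The plan is to show that $S$ is a tripod with ends $x,y,z$. The algorithmic part is then immediate: start from the vertex set of the component and greedily delete any vertex other than $x,y,z$ whose removal keeps the three vertices in one connected component. One pass over the vertices suffices, since connectivity of a subset is monotone under taking supersets. When the pass ends, no single vertex can be deleted, which is exactly inclusion-wise minimality.

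The structural argument runs as follows. Take a shortest path $P_{xy}$ from $x$ to $y$ in $G[S]$; it is an induced path. Then take a shortest path $Q$ in $G[S]$ from $z$ to the set $N[V(P_{xy})]$, and stop it at the first vertex that touches $P_{xy}$. By minimality, $S = V(P_{xy})\cup V(Q)$ plus possibly one attaching vertex. Indeed, this union is connected and contains $x,y,z$, so no other vertex can be in $S$.

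Let $q$ be the last vertex of $Q$, and let $a,b$ be its neighbours on $P_{xy}$ closest to $x$ and to $y$, respectively. There are three cases.

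\begin{itemize}
\item If $q$ lies on $P_{xy}$, or $q$ has a single neighbour on $P_{xy}$, we get three paths meeting at one vertex. These are the starts-all-equal case of the definition, with the centre being $q$ or its neighbour on $P_{xy}$.
\item If $q$ has exactly two neighbours on $P_{xy}$ and they are adjacent, we get the triangle case, with starts $q,a,b$.
\item If $a$ and $b$ are at distance at least $2$ on $P_{xy}$, the vertices of $P_{xy}$ strictly between $a$ and $b$ can be deleted. The remaining set still connects $x,y,z$ through $x\ldots a\,q\,b\ldots y$, which contradicts minimality.
\end{itemize}

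It then remains to check the anti-adjacency conditions between the three legs. Internal vertices of $Q$ have no neighbours on $P_{xy}$ by the choice of $q$ as the first touching vertex, and each leg is induced because it is a subpath of a shortest path. The degenerate cases where $z\in P_{xy}$, or where some leg has length $0$, fit the definition because paths of length $0$ are explicitly allowed.

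The main obstacle I expect is the case analysis at the attachment vertex $q$, in particular ensuring that the three starts are either all equal or pairwise adjacent. The minimality deletion argument is the tool I would try first to exclude any other configuration.
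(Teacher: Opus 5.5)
Your argument is correct: minimality forces $S=V(P_{xy})\cup V(Q)$, and your attachment cases are exhaustive. The far-apart case is ruled out by deleting the interior of $P_{xy}[a,b]$. The one-pass greedy works because a vertex whose deletion separates $x,y,z$ still separates them in every later subset, which also guarantees the final set is connected. The paper gives no proof of its own here: it calls the lemma straightforward and cites \cite[Claim 2]{DBLP:conf/stoc/GartlandLPPR21}, and your shortest-path-plus-minimality argument is the standard verification it has in mind. The phrase ``plus possibly one attaching vertex'' is superfluous, since $q\in V(Q)$ already touches $P_{xy}$.
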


\subsection{Long thetas and  long three-path-configurations}\label{sec:theta-xi-section}

We now show \cref{lem:theta-xi}
\obsThetaXi*

We will need to introduce some notation and terms.
Let $G$ and $H$ be graphs, where $H$ has $k$ vertices.
An induced minor model of $H$ in $G$ is a family $\cX = \{ X_v \}_{v \in V(H)}$ of pairwise disjoint connected subsets of $V(G)$,
such that $X_u$ and $X_v$ touch if and only if $uv \in E(H)$.

The following property of induced minor models is well-known, see e.g.~\cite[Lemma 2.1~1.]{DBLP:journals/jcss/BousquetDDHMPT26}

\begin{lemma}\label{lem:minor-models-degree-2}
Let $G$ and $H$ be graphs.
Given an induced minor model of $H$ in $G$,
in polynomial time one can find an induced minor model $\cX = \{ X_v \}_{v \in V(H)}$ of $H$ in $G$ such that for every vertex $u \in V(H)$ that has degree at most 2 and belongs to a component of $H$ that is not a cycle, we have $|X_u| = 1$.
\end{lemma}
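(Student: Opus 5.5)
We will proceed iteratively, fixing one bag at a time, and argue that each operation keeps $\cX$ an induced minor model of $H$ while strictly decreasing the number of \emph{bad} vertices. Here a vertex $u$ is bad if it has degree at most $2$, lies in a non-cycle component of $H$, and has $|X_u|\ge 2$. The basic move shrinks a bad bag and hands the excess to a neighbouring bag, which must be chosen so that it is never required to be a singleton, or is processed later.

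\textbf{Basic moves.} Let $u$ be bad.
\begin{itemize}
\item If $\deg(u)=0$, replace $X_u$ by any single vertex of it.
\item If $\deg(u)=1$ with neighbour $v$, replace $X_u$ by a single vertex of $X_u$ adjacent to $X_v$. Non-adjacencies with the other bags are inherited from the old bag, and adjacency to $X_v$ is preserved.
\item If $\deg(u)=2$ with neighbours $v,w$, take a shortest path $P=p_1\dots p_m$ in $G[X_u]$ from $N(X_v)\cap X_u$ to $N(X_w)\cap X_u$. By minimality only $p_1$ is adjacent to $X_v$ and only $p_m$ to $X_w$ (possibly $p_1=p_m$). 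Set $X_u:=\{p_m\}$ and $X_v:=X_v\cup\{p_1,\dots,p_{m-1}\}$.
\end{itemize}
In the degree-$2$ case the new $X_v$ is connected, since $p_1$ is adjacent to $X_v$. The vertices of $X_u$ only touch $X_v$, $X_w$ and nothing else outside, so the new bags touch exactly as before. The new $X_v$ does not touch $X_w$ because $p_1,\dots,p_{m-1}$ are non-adjacent to $X_w$. The new $X_u$ still touches both $X_v$ and $X_w$. All of these steps are clearly polynomial.

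\textbf{Order of processing.} The obstacle is that the degree-$2$ move enlarges $X_v$, which may destroy an earlier fix. In a non-cycle component, the degree-$\le 2$ vertices form maximal paths (chains) $u_1\dots u_r$.
\begin{itemize}
\item If some end of a chain is adjacent to a vertex $z$ of degree $\ge 3$ outside the chain, we orient the chain toward $z$. We process $u_r,u_{r-1},\dots,u_1$ in this order (with $u_r$ farthest from $z$), always dumping the excess onto the neighbour closer to $z$. That neighbour is either not yet processed or is $z$, which has no singleton requirement.
\item If $u_r$ has degree $1$, it is handled by the degree-$1$ move, so nothing is dumped.
\item Otherwise the component is itself a path $v_1\dots v_k$ (or a single vertex). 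There we first apply the degree-$1$ move to $v_k$. We then process $v_1,v_2,\dots,v_{k-1}$ in order, dumping onto the next vertex, and finally redo $v_k$ with the degree-$1$ move if it grew.
\end{itemize}

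Each bag that has been fixed is never enlarged afterwards, so the number of bad vertices drops to zero after $O(|V(H)|)$ moves. The main point to check carefully is this ordering and termination argument, particularly the chain endings and path components; the local correctness of each move is routine.
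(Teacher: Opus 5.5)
The paper does not prove this lemma; it imports it from \cite[Lemma~2.1]{DBLP:journals/jcss/BousquetDDHMPT26}, so there is no in-text argument to compare against. Your proof is correct and self-contained. The three local moves preserve the model; in the degree-$2$ move the essential points are the ones you state: $p_1,\dots,p_{m-1}$ are non-adjacent to $X_w$, $X_u$ touches no bag other than $X_v$ and $X_w$, and $p_m$ still touches both the enlarged $X_v$ and $X_w$. Your chain orientation ensures every dump lands either on a not-yet-processed vertex of the same chain or on a vertex of degree at least $3$, which has no singleton requirement.

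There are two things to tidy up.

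First, state the structural facts you use, each with its one-line justification:
\begin{itemize}
\item In a non-cycle component, the degree-$\le 2$ vertices induce a disjoint union of paths, because a cycle of degree-$2$ vertices would be a whole cycle component.
\item By maximality of the chain, any neighbour of a chain end lying outside the chain has degree at least $3$.
\item If no end of a chain has such a neighbour, the chain is an entire component, and hence a path.
\end{itemize}

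Second, your opening potential, ``strictly decreasing the number of bad vertices'', is not true move by move. A dump can turn an originally singleton $X_{u_{i-1}}$ into a bad bag. In the path case, your initial degree-$1$ move on $v_k$ is undone later, which also contradicts ``fixed bags are never enlarged''. Termination should rest only on the fixed processing order, in which each vertex is processed once, or twice for $v_k$. Simply drop the superfluous first move on $v_k$ and delete the potential-function sentence.
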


Now we are ready to prove \cref{lem:theta-xi}.
\begin{proof}[Proof of \cref{lem:theta-xi}]
    To prove the first item, it is enough to consider induced minor models of $\Theta_{t+2}$, as $\Theta_{\geq t+2}$ contains $\Theta_{t+2}$ as an induced minor.
    Denote the degree-3 vertices of $\Theta_{t+2}$ as $y$ and $z$,
    and the degree-2 vertices of each of the three paths by $a_1,\ldots,a_{t+2}$, $b_1,\ldots,b_{t+2}$, and $c_1,\ldots,c_{t+2}$, respectively, so that $a_1,b_1,c_1$ are neighbors of $y$, and $a_{t+2},b_{t+2},c_{t+2}$ are neighbors of $z$.

    Let $G$ be a graph that contains $\Theta_{t+2}$ as an induced minor.
    By \cref{lem:minor-models-degree-2}, consider an induced minor model $\cX = \{ X_v \}_{v \in V(\Theta_{t+2})}$ of $\Theta_{t+2}$ in $G$ where $|X_{a_i}| = |X_{b_i}| = |X_{c_i}| = 1$ for every $i \in [t+2]$.

    Note that each of sets $\bigcup_{i=2}^{t+1} X_{a_i}$, $\bigcup_{i=2}^{t+1} X_{b_i}$, and $\bigcup_{i=2}^{t+1} X_{c_i}$ induces a path on $t$ vertices, and these paths are pairwise disjoint and anti-adjacent.

    Let $a',b',c'$ be the unique vertices in $X_{a_1}$, $X_{b_1}$, and $X_{c_1}$, respectively.
    Note that the set $X_y \cup X_{a_1} \cup X_{b_1} \cup X_{c_1}$ is connected. 
    By Lemma~\ref{lem:make-tripod}, an inclusion-wise minimal connected induced subgraph of $G[X_y \cup X_{a_1} \cup X_{b_1} \cup X_{c_1}]$ that contains $a'$, $b'$, and $c'$ is a tripod with ends $a',b',c'$.

    Combining this tripod with an analogous one obtained for $X_z \cup X_{a_{t+2}} \cup X_{b_{t+2}} \cup X_{c_{t+2}}$ and the three paths between them, we obtain a $\Xi_{\geq t}$ in $G$.
\medskip

    The second item is straightforward: $\Theta_{\geq t}$ is obtained from $\Xi_{\geq t}$ by contracting the triangles consisting of the endpoints of paths, if such exist.
\end{proof}

Let us emphasize that the slack of 2 in the first item of \cref{lem:theta-xi} is necessary, as there are graphs that contain $\Theta_{t}$ as an induced minor but do not contain $\Xi_{\geq t-1}$, see \cref{fig:turtle}.

\begin{figure}
    \centering
\includegraphics[scale=1.2]{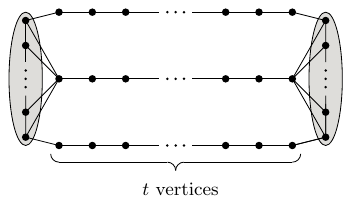}
\caption{An induced minor model of $\Theta_{t}$ that does not contain $\Xi_{\geq t-1}$.
Shaded areas indicate the sets of vertices corresponding to the degree-3 vertices of $\Theta_{t}$, and other vertices are represented by single vertices.
}\label{fig:turtle}
\end{figure}

	\section{Obtaining long 3PCs from (sub)cubic models}\label{sec:pack-in-multigraph}
	\subsection{Thetas in models give long 3PCs}

We start with the following analog of \cref{cor:cycles-from-model}.
\begin{lemma}\label{lem:xis-from-model}
  If $\mH$ is a $t$-model of $H$ in $G$ and $H$ admits a packing of $k$
  vertex-disjoint minor models of a theta, then $G[V(\mH)]$ admits a packing of $k$
  anti-adjacent $\Xi_{\geq t}$s.

  Furthermore, given the model and the packing of theta models, one can produce 
  the corresponding packing of $\Xi_{\geq t}$s in polynomial time.
\end{lemma}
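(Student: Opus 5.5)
The plan is to handle each theta minor model in $H$ separately and then check that the resulting $\Xi_{\geq t}$s are pairwise anti-adjacent.

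\emph{Reducing to subgraphs.} A minor model of a theta in $H$ contains, as a subgraph, a subdivision of $\ThetaZero$. Concretely, we take two branch vertices and three internally disjoint paths between them. If the model degenerates, we instead use a vertex carrying a loop plus a cycle through it, or two parallel edges plus a third path, all of which are subdivisions of $\ThetaZero$ in the multigraph sense. So from $k$ vertex-disjoint theta minor models we obtain $k$ vertex-disjoint subgraphs $T_1,\ldots,T_k$ of $H$, each consisting of two distinct vertices $a,b$ and three internally disjoint $a$--$b$ paths $Q_1,Q_2,Q_3$ in $H$; loops and parallel edges are allowed as path edges. If we only get a vertex of degree $\geq 3$ via loops, we first subdivide the loop conceptually by choosing an internal vertex of its edge bag. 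This corner case needs care, but $\Xi$ tolerates a shared start, so it should go through.

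\emph{Projecting one theta.} Fix $T=T_j$. For each path $Q_i$, we concatenate the edge bags $\eta(e)$ for $e\in Q_i$ with connecting subpaths inside the bags of the internal vertices of $Q_i$. Each internal vertex $v$ has degree $2$ in $T$, so we join the two endpoints $\eta(e,v)$ and $\eta(f,v)$ by a shortest path in $G[\eta(v)]$. By (iii) and (v) of \cref{def:model}, the result is an induced path, possibly after shortcutting when $\eta(e,v)$ and $\eta(f,v)$ are adjacent or equal. It has at least $t$ internal vertices, since it contains a whole edge bag.

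At the branch vertex $a$, let $x_1,x_2,x_3$ be the endpoints $\eta(e_i,a)$ of the first edges $e_i$ of the three paths. We apply \cref{lem:make-tripod} inside $G[\eta(a)]$ to $x_1,x_2,x_3$. If they are not distinct (allowed by (v)) or not in general position, we take the minimal connected subgraph directly. This gives a tripod whose legs we prepend to the three paths, and similarly at $b$. The tripod centre plays the role of the common start: either a single vertex or a triangle.

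Two technical points need handling here. First, legs of the tripod may be adjacent to vertices of the edge bags other than their own endpoint; (iii) forbids this except at the endpoint itself. Second, when $x_i$ and $x_j$ coincide or are adjacent, the tripod degenerates. We fix this by choosing the tripod's ends as the $x_i$ and then taking the three combined paths, each being tripod leg $\cdot$ edge-chain $\cdot$ tripod leg. They are induced, pairwise disjoint and anti-adjacent, except at the starts and ends. If the starts come out neither equal nor a triangle, we trim: any extra adjacency between $x_i$ and $x_j$ lives inside the tripod centre, which is exactly the allowed configuration. The lengths still exceed $t$, so we obtain a $\Xi_{\geq t}$.

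\emph{Anti-adjacency between different thetas.} The $\Xi$ from $T_j$ uses only bags of vertices and edges of $T_j$. Since the $T_j$ are vertex-disjoint in $H$, their edges share no endpoints. By (i), (iv) and (v), bags of distinct vertices, of a vertex and a non-incident edge, and of edges with no common endpoint do not touch. Hence the packing is anti-adjacent. All steps are shortest-path or tripod computations, so they run in polynomial time.

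I expect the main obstacle to be the bookkeeping at branch vertices: verifying that the tripod legs together with the edge-bag prefixes yield genuinely induced paths with a valid start configuration, given that \cref{def:model} allows $\eta(e,v)=\eta(f,v)$ or adjacency.
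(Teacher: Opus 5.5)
Your proposal is correct and follows essentially the paper's argument. You pass to a topological theta with two distinct branch vertices and three internally disjoint paths, project each path to an induced path through the bags, close it off with tripods inside the two branch-vertex bags, and get anti-adjacency across thetas from (i), (iv), (v). The paper differs only in where it anchors the tripods, and in one step you can simply drop.

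\begin{itemize}
\item \emph{Tripod anchors.} The paper anchors each tripod at the neighbours $q_i^a$ of $\eta(a)$ on the projected paths, not at the points $\eta(e_i,a)$. These neighbours are automatically distinct and pairwise non-adjacent, so \cref{lem:make-tripod} applies verbatim. Your degenerate-case bookkeeping, including the ``trim'' step, then becomes unnecessary.
\item \emph{Loop digression.} This case is vacuous and should be dropped. A loop plus a cycle through its vertex is a cactus, not a subdivision of $\ThetaZero$. A $\ThetaZero$ minor model always yields two distinct branch vertices joined by three internally disjoint paths, so loops never arise.
\end{itemize}
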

\begin{proof}
	It suffices to prove that, given a minor model of a theta in $H$, one can
	produce in polynomial time a $\Xi_{\geq t}$ in the subgraph of $G$ induced
	by the bags of the vertices and edges used in the said minor model. 

	Since theta is a subcubic graph, we can change the input minor model into a topological
	minor model of a theta in $H$: we have two distinct vertices $a,b \in V(H)$
	connected by three paths $P_1,P_2,P_3$ that are vertex-disjoint except for the endpoints.

	Fix $i \in [3]$. By the definition of a $t$-model, the union of bags of edges
	and internal vertices of $P_i$ induce a connected subgraph of $G$ that both 
	contain a vertex of $\eta(a)$ and a vertex of $\eta(b)$. Let $Q_i$ be a shortest
	path in this induced subgraph between $\eta(a)$ and $\eta(b)$. 
	By the definition of a $t$-model, $Q_i$ contains all paths of $\eta(e)$
	for $e \in E(P_i)$ and paths within $\eta(v)$ for every internal vertex $v$ of $P_i$.
	In particular, $Q_i$ contains at least $t$ internal vertices, as $P_i$ contains at least one edge.

	Let $Q_i'$ be the subpath of $Q_i$ with the endpoints omitted; $|V(Q_i')| \geq t$.
	Let $q_{i}^a$ and $q_i^b$ be the endpoints of $Q_i'$ that are neighbors
	of $\eta(a)$ and $\eta(b)$, respectively. 
	By \cref{lem:make-tripod}, for $\gamma \in \{a,b\}$, 
	we can find a tripod $T^\gamma$ connecting $q_1^\gamma$, $q_2^\gamma$, $q_3^\gamma$
	in $G[\eta(\gamma) \cup \{q_1^\gamma,q_2^\gamma,q_3^\gamma\}]$ in polynomial time.
	It is straightforward to check that the union of $T^a$, $T^b$, $Q_1'$, $Q_2'$, and $Q_3'$
	form the desired $\Xi_{\geq t}$.
\end{proof}

\subsection{Packing thetas in a subcubic model}

We will need the following analog of \cref{thm:simonovits} for thetas. 

\begin{restatable}{theorem}{packthetasinH}\label{thm:thetas-in-cubic}
There exists a function $\ftheta(k) = \Oh(k \log k)$ with the following property.
For every $k \geq 1$, 
if a subcubic multigraph without loops satisfies $\dthree(G) \geq \ftheta(k) + 8\dlttwo(G) + 18n_{||}$,
where $n_{||}$ is the number of parallel edges in $G$,
then $G$ contains $k$-vertex-disjoint thetas (as a subgraph).
Furthermore, these thetas can be found in time polynomial in $|V(G)|$.
\end{restatable}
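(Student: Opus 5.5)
The plan is to reduce to a simple cubic graph and then apply a theta analogue of Simonovits' argument. Thetas are exactly what appear as subgraphs in any 2-connected graph that is not a cycle. In particular, any subgraph with minimum degree $3$ contains a theta. The argument has three steps, plus the algorithmic remark.

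\textbf{Step 1 (cleaning).} First, delete one edge from each parallel class, or delete both endpoints of a parallel pair. Then repeatedly suppress or delete low-degree vertices. Concretely, delete all vertices of degree $\le 1$ iteratively. Each such deletion destroys at most a bounded number of degree-$3$ vertices, since they drop to degree $2$, and creates no new degree-3 vertices. Then suppress degree-$2$ vertices, replacing each maximal path through degree-$2$ vertices by a single edge. This can create new parallel edges or loops. Each loop or parallel pair is charged to the degree-$\le 2$ vertices, or original parallel edges, that created it. It is then killed by deleting a bounded number of neighbouring cubic vertices.

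The constants $8$ and $18$ are designed so that the total loss in cubic vertices is at most $8\dlttwo(G)+18n_{||}$. Each low-degree vertex or parallel edge triggers a cascade of only constantly many deletions before the degree-$3$ count stabilises. I would formalise this by an induction on $\dlttwo+n_{||}$: handling one defect reduces the potential $\dthree-8\dlttwo-18n_{||}$ by nothing. The outcome is a cubic multigraph $G'$ with at least $\ftheta(k)$ vertices. It has no loops or parallel edges, or at most ones I can remove again within budget. Moreover, any theta subgraph of $G'$ lifts to a theta subdivision, which is again a theta, in $G$. Disjoint thetas in $G'$ lift to disjoint thetas in $G$ because suppressed paths are internally disjoint.

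\textbf{Step 2 (packing in a simple cubic graph).} Now I mimic the Erd\H{o}s--P\'osa/Simonovits proof. A simple cubic graph on $n$ vertices has girth $O(\log n)$. Take a shortest cycle $C$, of length $g=O(\log n)$. I aim to find a theta near $C$ using $O(\log n)$ vertices. Since $G'$ is cubic and $C$ is induced (shortest), every vertex of $C$ has a neighbour outside $C$. Running BFS from $C$ gives a second cycle or a path between two vertices of $C$ within radius $O(\log n)$, which together with $C$ forms a theta. The bound follows because cubic graphs where the radius-$r$ ball around $C$ is a tree of growth $2^r$ must have $2^r\le n$.

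Then delete the $O(\log n)$ vertices of this theta $T$, and restore the invariants via Step 1. The deletion creates $O(\log n)$ degree-$\le 2$ vertices, which costs $O(\log n)$ cubic vertices. Iterating, each theta consumes $O(\log n)$ vertices. So we get $k$ thetas as long as $n \ge C k\log n$, which holds for $n\ge \ftheta(k)=Ck\log k$ with $C$ large. The standard trick is to note that if $n\ge Ck\log k$, then throughout the first $k$ rounds the graph keeps $\ge n/2$ vertices, so $\log n$ stays $\Theta(\log k)$ in the relevant regime. When $n$ is much larger than $k\log k$, one can first pass to a subgraph, or simply note that $k\log n\le n/2$ still holds.

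\textbf{Algorithmic part and main obstacle.} Everything is polynomial: BFS, cleaning, and iteration. The main obstacle is the bookkeeping in Step 1: verifying that the specific constants $8$ and $18$ suffice, i.e. that suppression cascades around a parallel edge cost at most $18$ cubic vertices and around a low-degree vertex at most $8$. I would first try a potential-function argument with a careful case analysis of a parallel pair lying on a cubic vertex pair, and of a degree-$2$ vertex whose suppression creates a parallel edge.
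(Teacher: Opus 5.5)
\textbf{There is a genuine gap in Step~1, and it recurs in Step~2.} You claim that each low-degree vertex or parallel edge triggers only a bounded cascade, and this is false. Suppressing a degree-$2$ vertex that lies in a triangle creates a parallel pair. Killing that pair creates new low-degree vertices, whose suppression creates new parallel pairs, and so on. For a concrete example, take diamonds $K_4^-$ for $i=1,\dots,m$, each with degree-$2$ vertices $p_i,q_i$ and middle edge $x_iy_i$, and add the edges $q_ip_{i+1}$. This simple subcubic graph has $\dlttwo=2$, $n_{||}=0$ and $\dthree=4m-2$. So it satisfies the hypothesis for large $m$, and it has $m$ disjoint thetas. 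Your procedure suppresses $p_1$, which doubles $x_1y_1$. Removing that pair, either by deleting $x_1,y_1$ or one copy of the edge, pushes $q_1$ down, then $p_2$, and the cascade eats the whole chain.

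No smarter cleaning can fix this. Your $G'$ is a topological minor of $G$. Every simple cubic graph has a $2$-connected block with at least $6$ edges: either the whole graph, or a leaf block, which has an odd number $\geq 5$ of vertices. A subdivision of that block would have to fit inside a single block of the chain, and those have at most $5$ edges. So reducing to a simple cubic graph within budget $8\dlttwo+18n_{||}$ is impossible in general. The same objection applies to ``restore the invariants via Step~1'' after each theta deletion. There is also a smaller error in Step~2: a second cycle found by BFS from $C$, together with $C$ and a connecting path, is a dumbbell, not a theta. An $\Oh(\log n)$-size theta does exist in a simple cubic graph, but you would need, e.g., that two BFS-fundamental cycles sharing an edge yield a theta, plus the cactus edge bound to force exponential growth of the ball.

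\textbf{The missing idea is to never suppress degree-$2$ vertices and to charge them globally instead.} The paper only deletes one edge of a parallel pair. This lowers $\dthree$ by at most $2$, raises $\dlttwo$ by at most $2$, and lowers $n_{||}$ by at least $1$, so the hypothesis is preserved because $2\cdot 8-18=-2$; this is where the $18$ comes from. On the resulting simple subcubic graph, it applies the Erd\H{o}s--P\'osa theorem for thetas of Chatzidimitriou, Raymond, Sau and Thilikos. That theorem gives either $k$ disjoint thetas or a hitting set $X$ with $|X|\le f_{\mathrm{CRST}}(k)=\Oh(k\log k)$. In the latter case $G-X$ is a subcubic cactus, so it has at most $\frac43|V(G-X)|$ edges, while $|E(G)|\ge\frac32\dthree(G)$. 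Comparing these gives $|V(G)|\le 10|X|+9\dlttwo(G)$, which contradicts the hypothesis with $\ftheta(k)=10f_{\mathrm{CRST}}(k)+1$; this is where the $8$ comes from.
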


Note that in \cref{thm:thetas-in-cubic} we penalize for parallel edges.
To see that this is necessary, consider a subcubic multigraph obtained from a path by duplicating every second edge. This multigraph has a constant number of vertices of degree less than $3$, many vertices of degree 3, but it does not contain any theta as a subgraph.

\medskip

A multigraph is a \emph{cactus} if its every block (inclusion-wise maximal biconnected subgraph) is either a cycle or an edge.
Equivalently, a cactus is a multigraph that does not contain two cycles that share an edge.
This latter characterization can be rephrased in the following way that will be very useful to us.

\begin{proposition}
A multigraph is a cactus if and only if it does not contain any theta as a subgraph.
\end{proposition}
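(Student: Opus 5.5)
The proposition asks for two implications: if $G$ is a cactus it contains no theta, and if $G$ contains no theta it is a cactus. I will use the definition of a cactus as a multigraph whose every block is a cycle or a single edge, where a loop counts as a cycle block. The key tool is the standard fact that any two vertices of a $2$-connected multigraph on at least three vertices lie on a common cycle (Menger), together with the fact that every cycle of a graph lies inside a single block.

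\emph{Cactus implies no theta.} Suppose $G$ contains a theta as a subgraph. It has two branch vertices $a \neq b$ and three internally disjoint $a$--$b$ paths $P_1,P_2,P_3$. The union of these paths is $2$-connected, since deleting any single vertex leaves it connected. Hence it lies inside one block $B$ of $G$.

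This block $B$ contains the two distinct cycles $P_1\cup P_2$ and $P_1\cup P_3$. These cycles share the edges of $P_1$, and $P_1$ has at least one edge. A block that is a cycle contains exactly one cycle, and a block that is a single edge contains none, so $B$ is neither. Therefore $G$ is not a cactus.

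\emph{No theta implies cactus.} Suppose some block $B$ of $G$ is neither a cycle nor a single edge; I will find a theta in $B$. Loops cannot occur here, since a loop forms a block by itself and is a cycle.

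First suppose $B$ has exactly two vertices. Then $B$ is a bundle of parallel edges between them. Two parallel edges form a $2$-cycle, which is a cycle block, so $B$ must have at least three parallel edges. Three parallel edges already form a theta.

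Now suppose $|V(B)|\ge 3$. Then $B$ is $2$-connected and so contains a cycle $C$. Since $B\neq C$, there is an edge or a vertex of $B$ outside $C$.
\begin{itemize}
\item If there is an edge $uv\notin E(C)$ with $u,v\in V(C)$ (possibly parallel to an edge of $C$), then $C\cup\{uv\}$ is a theta with branch vertices $u$ and $v$.
\item Otherwise there is a vertex $w\notin V(C)$. By $2$-connectivity (the fan lemma), there are two paths from $w$ to $C$ that are disjoint except at $w$ and end at distinct vertices $u\neq v$ of $C$. Their union is a $u$--$v$ path internally disjoint from $C$, and together with $C$ it forms a theta with branch vertices $u$ and $v$.
\end{itemize}

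The equivalence with the alternative characterization stated before the proposition, ``no two cycles share an edge,'' follows from the same argument. A theta contains two cycles sharing the edges of $P_1$. Conversely, if two distinct cycles share an edge, take an edge of one cycle that is not on the other. Extend it to a maximal subpath of the first cycle that has no edges on the second cycle; its two ends lie on the second cycle and are distinct, because the second cycle passes through at least two vertices (the ends of the shared edge), so the path cannot return to a single vertex. This subpath is an ear of the second cycle, and together they form a theta.

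The only delicate points are the multigraph corner cases: loops, $2$-cycles formed by parallel edges, and blocks with two vertices. The first step handles these. The rest is routine.
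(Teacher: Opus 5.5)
Your proof is correct, but it takes a different route from the paper. The paper gives no argument for this proposition. It defines a cactus via blocks, asserts as folklore that this is equivalent to ``no two cycles share an edge,'' and presents the proposition as a rephrasing of that characterization.

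You go directly from the block definition to thetas. For one direction, you place a theta inside a single block and note that this block then contains two distinct cycles. For the other, you do a case analysis on a block that is neither a cycle nor an edge, using the fan lemma to produce an ear of a cycle. This gives a self-contained proof that handles the multigraph corner cases explicitly: loops as separate blocks, $2$-cycles, and two-vertex blocks with at least three parallel edges. The paper's route is a one-liner, but it leaves the nontrivial step to the unproved folklore equivalence.

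There is one small slip, and only in your optional last paragraph. A maximal subpath of the first cycle $C_1$ with no edges in the second cycle $C_2$ may still pass through vertices of $C_2$ in its interior, so it need not be an ear of $C_2$. To fix this, take the segment of that subpath between two consecutive vertices of $V(C_2)$. Alternatively, walk along $C_1$ from your chosen edge in both directions until you first hit $V(C_2)$. The ends of such a segment are distinct because it is a subpath of a path, and together with $C_2$ it forms a theta. The proposition itself does not depend on that paragraph, so your main proof stands as written.
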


To show \cref{thm:thetas-in-cubic}, we will use the \EP-type theorem for vertex-disjoint thetas.

\begin{theorem}[Chatzidimitriou, Raymond, Sau, Thilikos~\cite{DBLP:journals/algorithmica/Chatzidimitriou18}]\label{thm:EP-thetas-subgraph}
There exists a function $f_{\textrm{CRST}}(k) = \Oh(k \log k)$ such that the following holds.
Given a graph $G$ and an integer $k$, in polynomial time we can find either $k$ vertex-disjoint thetas as a subgraph of $G$ or a set $X \subseteq V(G)$ of size at most $f_{\textrm{CRST}}(k)$ such that $G-X$ does not contain any theta as a subgraph.
\end{theorem}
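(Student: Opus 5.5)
The statement immediately above is \cref{thm:EP-thetas-subgraph}, a cited result of Chatzidimitriou, Raymond, Sau, and Thilikos; the paper imports it rather than proving it. What follows is a sketch of how one would prove it. The idea is to follow the Simonovits-style argument for cycles, adapted to thetas.

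We argue by induction on $k$. First, we may assume $G$ has minimum degree at least $3$ after standard reductions. Deleting vertices of degree at most $1$ does not change whether a theta exists or which hitting sets work. Suppressing degree-$2$ vertices turns $G$ into a multigraph, and thetas in $G$ correspond exactly to thetas in the reduced multigraph. Hitting sets also transfer back: a suppressed vertex can be replaced by an endpoint of its thread.

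\emph{Step 1 (finding a short theta).} In a multigraph of minimum degree $3$ on $n$ vertices, there is a theta with $\Oh(\log n)$ vertices. Grow a BFS tree from any vertex. Since degrees are at least $3$, the tree keeps branching until depth $\Oh(\log n)$. Before that depth, two non-tree edges within the BFS ball must close two cycles sharing a path, and together they contain a theta on $\Oh(\log n)$ vertices. Equivalently, a graph of minimum degree $3$ has two cycles of length $\Oh(\log n)$ that share a vertex, and these yield a small theta.

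\emph{Step 2 (splitting into cases).} Let $c$ be a suitable constant. If $G$ has few vertices, say $n \le c\, k\log k$, we output $X=V(G)$, which has size $\Oh(k\log k)$. Otherwise $n$ is large. We take a short theta $T$ with $\Oh(\log n)$ vertices, remove it, reduce again, and recurse with $k-1$. Every recursive call either returns $k-1$ disjoint thetas, to which we add $T$, or returns a hitting set.

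\emph{Step 3 (controlling the budget).} The naive recursion produces a hitting set of size $\Oh(\log n)$ per level, and $\log n$ is not bounded in terms of $k$. We therefore use the standard refinement. Removing $T$ and re-reducing deletes only $\Oh(|T|)$ branch vertices, because each removed vertex lowers degrees by a bounded amount in the subcubic case, or we pass to a subcubic subgraph first. So after $k$ rounds a graph with at least $C k\log k$ branch vertices still has minimum degree $3$ and many vertices. This means a large branch-vertex count forces $k$ disjoint thetas directly, which gives a theta analogue of \cref{thm:simonovits}.

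\emph{Step 4 (the small case).} If the number of branch vertices is less than $C k \log k$, then hitting every theta is easy: in a reduced multigraph every theta passes through a branch vertex, so the branch vertices themselves form a hitting set of size $\Oh(k\log k)$. The main obstacle is justifying the reduction step. One must show that branch vertices are truly few in the reduced graph and that the reduction preserves theta-hitting. The delicate part is parallel edges and loops: a cactus-like remainder after reduction must be checked to be theta-free. I would handle this through the characterization from the previous Proposition, that a multigraph is theta-free exactly when it is a cactus. Every step above is a polynomial-time procedure, which gives the algorithmic claim.
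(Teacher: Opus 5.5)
There is a genuine gap. Steps 1 and 3 rely on a claim that is false in exactly the setting where you apply it. After suppressing degree-$2$ vertices, and again after deleting a theta $T$ and re-reducing, you are working in a multigraph with loops and parallel edges. There, minimum degree $3$ neither yields a short theta nor makes many branch vertices force many disjoint thetas.

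Loops alone could be handled by an extra rule: no theta uses a loop, so delete loops and re-reduce. Parallel edges cannot. Take the cycle $v_1v_2\cdots v_{2m}$ and double every edge $v_{2i-1}v_{2i}$. This is what you get by suppressing the degree-$2$ vertices of a simple subcubic graph in which each doubled edge is realized by a $C_4$. The result is a loopless cubic multigraph with $2m$ branch vertices. Yet every theta must use both copies of some doubled edge plus the entire rest of the cycle. So every theta contains all $2m$ vertices, and no two thetas are disjoint.

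Hence there is no theta with $\Oh(\log n)$ vertices, and the ``theta analogue of Simonovits'' you state in Step 3 is false. This is exactly why \cref{thm:thetas-in-cubic} carries the penalty $8\dlttwo(G)+18n_{||}$; the paper makes the same point with a path whose every second edge is doubled. Note also that the paper \emph{derives} \cref{thm:thetas-in-cubic} from the very statement you are proving, so you cannot borrow it here.

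Your BFS argument is flawed even for simple graphs. Two short cycles sharing a single vertex form a figure-eight, which contains no theta; you need two short cycles sharing at least two vertices. Invoking the cactus characterization only names the obstacle. Chains of digons are cactus-like and cheap to hit, yet carry many branch vertices, and the characterization gives no mechanism to charge for them.

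Second, the dichotomy between Steps 3 and 4 does not close when degrees are unbounded. $K_{3,m}$ is simple, has minimum degree $3$ and $m+3$ branch vertices. Yet every theta uses at least two of the three vertices on the small side, so there are no two disjoint thetas, while those three vertices hit every theta. So counting branch vertices of the reduced graph cannot produce the packing.

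Your claim that ``removing $T$ deletes only $\Oh(|T|)$ branch vertices'' also fails here. Deleting one small-side vertex collapses the reduced graph to two vertices joined by $m$ parallel edges. If instead you pass to a subcubic subgraph to obtain the packing, then in the complementary case the few branch vertices of that subgraph no longer hit all thetas of $G$.

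What is missing is the hitting-set half of an Erd\H{o}s--P\'osa-type argument. One would grow a \emph{maximal} subcubic structure, adding only ears between distinct edges, as in \cref{lem:max-model-no-inter-ear}. Its maximality would then show that its branch vertices, plus a bounded number of further vertices, hit every theta of the whole graph. A separate argument is also needed for the thetas living on a single subdivided edge (ears with both ends on the same edge), which is where the digon chains come from. Without these ingredients the proposal does not prove the statement.
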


We will also need the following lemma.

\begin{lemma}\label{lem:hittingsetforthetas}
Let $G$ be a simple subcubic graph and let $X \subseteq V(G)$ be such that $G - X$ does not contain any theta as a subgraph.
Then $|V(G)| \leq 10|X| + 9\dlttwo(G)$.
\end{lemma}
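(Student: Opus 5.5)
The plan is to bound $|V(G)|$ by counting edges between $X$ and the rest, using the fact that $G-X$ is a forest-like cactus structure whose degree-3 vertices are limited. Let $Y = V(G)\setminus X$ and $F = G[Y]$. By the Proposition, $F$ is a (simple) cactus. Since $G$ is subcubic, the number of edges between $X$ and $Y$ is at most $3|X|$. The goal is to show $|Y| \le 9|X| + 9\dlttwo(G)$, which gives the claim.

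First I will control the vertices of $Y$ by their degree in $F$. Each vertex $y \in Y$ has $\deg_G(y)\le 3$, and $\deg_F(y) = \deg_G(y) - |N(y)\cap X|$. Let $d_i$ denote the number of vertices of $Y$ with $\deg_F = i$. Vertices with $\deg_F(y) < \deg_G(y)$ number at most $3|X|$ (each such vertex receives an $X$-edge), and vertices with $\deg_G(y)\le 2$ number at most $\dlttwo(G)$. So all vertices of $F$-degree at most $2$ number at most $3|X| + \dlttwo(G)$, except that vertices of degree exactly 2 in $F$ and degree 2 in $G$ are also included in $\dlttwo(G)$; hence $d_0+d_1+d_2 \le 3|X|+\dlttwo(G)$.

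The main step is to bound $d_3$ in terms of low-degree vertices of the cactus. For a cactus $F$ (each component), I would use the cyclomatic/block structure: in a connected cactus with $c$ cycles, $|E(F)| = |V(F)| - 1 + c$, so $\sum (\deg_F - 2) = 2c - 2$. This alone does not bound $d_3$, since cycles contribute. The key is that in a simple subcubic cactus, every cycle, being a block, can share vertices with the rest only at cut vertices of degree 3, and each degree-3 vertex on a cycle has exactly one edge leaving the cycle; so contracting each cycle to a single node yields a tree whose leaves correspond to cycles or blocks ending in low-degree vertices. A cycle that is a leaf of this block tree has at most one degree-3 vertex, hence (being simple, length $\ge 3$) contains at least two vertices of $F$-degree 2. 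Counting via the block-cut tree: the number of degree-3 vertices of $F$ is at most a constant times the number of leaf blocks plus the number of degree-2 vertices, and each leaf block contains a vertex of degree $\le 2$. Roughly, I expect $d_3 \le 2(d_0+d_1+d_2) + O(\#\text{components})$, and each component contains a vertex of $F$-degree $\le 2$ (a finite cactus with all degrees 3 has a leaf block which is a cycle with $\le 1$ cut vertex, contradiction), so the component term is absorbed.

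Combining, $|Y| \le 3(d_0+d_1+d_2) \le 9|X| + 3\dlttwo(G)$, giving $|V(G)|\le 10|X| + 9\dlttwo(G)$ with room. The main obstacle is the clean cactus counting in the third paragraph: making precise that degree-3 vertices are charged to low-degree vertices with constant multiplicity. I would do it by induction on the number of cycles, deleting one edge from a leaf cycle (turning two degree-2 vertices… or rather reducing cycles to the tree case) and using the tree inequality $d_3 \le d_1 + 2d_0 - 2$ for trees, tracking how each edge deletion changes the low-degree counts by at most $2$ while destroying one cycle whose length-$\ge3$ guarantees an existing degree-2 vertex to pay for it.
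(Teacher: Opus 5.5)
\textbf{There is a genuine gap in your main step.} Your reduction to $|Y|\le 9|X|+9n_{\le 2}(G)$ and the count $d_0+d_1+d_2\le 3|X|+n_{\le 2}(G)$ are fine. The problem is the inequality $d_3\le 2(d_0+d_1+d_2)$ plus a term linear in the number of components, which is false for simple subcubic cacti.

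Take a tree in which $i$ nodes have degree $3$ and the other $i+2$ are leaves. Replace each degree-$3$ node by a triangle, attaching its three tree edges to three distinct triangle vertices. The result is a connected simple subcubic cactus with $d_3=3i$, $d_1=i+2$ and $d_0=d_2=0$. Hence $d_3-2(d_0+d_1+d_2)=i-4$ is unbounded.

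Your charging breaks exactly at non-leaf cycles all of whose vertices have degree $3$. Such a cycle has no degree-$2$ vertex to pay for it, and in the contracted tree it is a branching node, so each leaf ends up supporting about three degree-$3$ vertices. What your contraction argument actually yields is the following. Per component, $d_3=d_1+2d_0-2+2m$ and $m\le 2d_0+d_1+d_2-2$, so $d_3\le 6d_0+3d_1+2d_2$. In particular $d_3\le 3(d_0+d_1+d_2)$, and the example shows the constant $3$ is attained. Combined with your unweighted count, this only gives $|Y|\le 12|X|+4n_{\le 2}(G)$, which does not imply the lemma. So the ``room'' you claim is not there, and no unweighted version of this route reaches the constant $10$.

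\textbf{How to repair it.} The information you discard is that a vertex of $F$-degree $1$ (resp.\ $0$) uses up two (resp.\ three) edges to $X$. Count degree deficits instead:
\[ \sum_{y\in Y}(3-\deg_F(y))=3d_0+2d_1+d_2\le 3|X|+3n_{\le 2}(G). \]
This holds because, at a vertex of $G$-degree $3$, every unit of deficit is an $X$--$Y$ edge. It then suffices to show $|Y|\le 3(3d_0+2d_1+d_2)$, that is, $d_3\le 8d_0+5d_1+2d_2$. This is equivalent to $|E(F)|\le\frac43|V(F)|$, which is exactly the claim the paper proves. The paper does this by induction on blocks, peeling off a leaf bridge or a leaf cycle together with its parent bridge. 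It then finishes with the global edge count $\frac32\bigl(|V(G)|-n_{\le 2}(G)\bigr)\le|E(G)|\le|E(F)|+3|X|$. Alternatively, your corrected block-tree bound $d_3\le 6d_0+3d_1+2d_2$ also suffices once paired with the weighted count.
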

\begin{proof}
Let $T$ denote the graph $G - X$. Note that this is a subcubic cactus: a graph whose every block is an edge or a cycle.

\begin{claim}\label{clm:edgesincactus}
Every simple $n$-vertex subcubic cactus has at most $\frac{4}{3}n$ edges.
\end{claim}
\begin{proofofclaim}
Let $H$ be an $n$-vertex subcubic cactus with largest possible number of edges.
Note that this implies that $H$ is connected.
Indeed, it is easy to observe that every cactus has a vertex of degree at most 2, and adding an edge between two such vertices in different components creates a new subcubic cactus with more edges.

The proof is by induction on the number of blocks.
If $H$ has one block, then the number of edges is at most $n$ and we are done.
Also, if $H$ is a tree, then clearly its number of edges is $n-1 \leq \frac{4}{3}n$.
Thus, suppose that $H$ has at least two blocks, and at least one of them is a cycle.
Pick an arbitrary block $R$ that is a cycle and consider it as a root.
Note that $H$ has tree-like structure: $R$ is the root, all blocks that intersect $R$ are children of $R$, and so on.
Furthermore, since $H$ is subcubic, the parent of a block that is a cycle is always an edge.

Let $B$ be a block that corresponds to a leaf in the tree discussed above.
We consider two cases.

If $B$ is a single edge, denote its vertices by $vv'$, where $v$ is the common vertex of $B$ and its parent block.
The graph $H - v'$ has $n-1$ vertices and one block fewer than $H$, so by induction it has at most $\frac{4}{3}(n-1)$ edges.
Consequently, $H$ has at most $\frac{4}{3}(n-1)+1 \leq \frac{4}{3}n$ edges.

So, assume that $B$ is a cycle on $k \geq 3$ vertices.
Let its parent block be $B'$ and recall that $B'$ is an edge. In particular, $B'$ is not the root block.
Let us consider the graph $H - B$, i.e., we remove all vertices from $B$. Note that this also removes the block $B'$.
Thus, $H-B$ has $n-k$ vertices and two blocks fewer than $H$, so, by induction, its number of edges is at most $\frac{4}{3}(n-k)$.
Consequently, the number of edges of $H$ is at most
\[
		\frac{4}{3}(n-k) + k + 1 \leq \frac{4}{3}n,
\]
since $k \geq 3$.
\end{proofofclaim}

Let $x=|X|$ and $n = |V(T)|$, i.e., $G$ has $n+x$ vertices.
We denote $\dlttwo \coloneqq \dlttwo(G)$ and $\dthree \coloneqq \dthree(G)$ for brevity.
Since $G$ is subcubic, its number of edges is at least
$\frac{3}{2} \dthree = \frac{3}{2}(n+x - \dlttwo)$.
By \cref{clm:edgesincactus}, $T$ has at most $\frac{4}{3}n$ edges.
As every vertex of $X$ is incident to at most three edges,
removing $X$ removed at most $3x$ edges.
Hence, 
\[ \frac{3}{2} (n + x - \dlttwo) \leq |E(G)| \leq |E(T)| + 3x \leq \frac{4}{3}n + 3x. \]
After rearranging, 
\[ \frac{1}{6}n \leq \frac{3}{2}x + \frac{3}{2} \dlttwo. \]
Hence,
\[ |V(G)| = n+x \leq 10x + 9\dlttwo.\]
This completes the proof of the lemma.
\end{proof}

Now we are ready to prove \cref{thm:thetas-in-cubic}.

\begin{proof}[Proof of \cref{thm:thetas-in-cubic}]
We start with observing that is enough
to prove the statement for the case $n_{||} = 0$. Indeed, deleting from $G$ a single edge
that has a parallel edge decreases $n_{||}$ by at least one,
while decreases $\dthree(G)$ by at most two and increases $\dlttwo(G)$ by at most two.
Thus, in what follows we assume $n_{||} = 0$; with the assumption
that $G$ does not contain a loop, $G$ is in fact a simple graph.

Let $f_\Theta(k) = 10 f_{\textrm{CRST}}(k) +1 = \Oh(k \log k)$, where $f_{\textrm{CRST}}$ is the function from \cref{thm:EP-thetas-subgraph} and consider a simple subcubic graph $G$
with $\dthree(G) \geq f_\Theta(k) + 8\dlttwo(G)$.
We invoke the algorithm from \cref{thm:EP-thetas-subgraph} for $G$ and $k$.
If the call returns $k$ vertex-disjoint thetas, we are done.
So suppose otherwise, i.e., there is $X \subseteq V(G)$ of size at most $f_{\textrm{CRST}}(k)$ such that $G - X$ has no theta as a subgraph.
Applying \cref{lem:hittingsetforthetas}, we conclude that 
\[ |V(G)| \leq 10|X| + 9\dlttwo(G) \leq 10f_{\textrm{CRST}}(k) + 9\dlttwo(G). \]
Hence,
\[ \dthree(G) = |V(G)| - \dlttwo(G) \leq 10f_{\textrm{CRST}}(k) + 8\dlttwo(G) < f_\Theta(k) + 8\dlttwo(G). \]
This is a contradiction.
\end{proof}

	\section{Algorithmic induced Erd\H{o}s--P\'osa property for long 3PCs}\label{sec:L3PC}
	
In this section, we prove the main technical result of the paper,
namely \cref{thm:EP-3LPC-algo}.

We start by showing that a $t$-long 3PC always gives raise
to a short $t$-model $\mH_0$ of $\ThetaZero$ (\cref{lem:detect-L3PC}).
We would like to proceed as in the proof of \cref{thm:EP-cycles-algo}:
add ears to the model as long as possible, obtaining a maximal model $\mH=(H,\eta,R)$,
and then argue that we can recurse on the connected components of $G-N[R]$.
\cref{thm:thetas-in-cubic} gives us a large packing of thetas in $H$ and \cref{lem:theta-xi}
translates it into a packing of anti-adjacent $\Xi_{\geq t}$s. 
However, this strategy hits two problems. 

First, adding an ear with the endpoints on the same edge does not give progress:
it produces parallel edges, which do not necessarily lead to more theta minors
in $H$. Hence, we need to allow only ears connecting two distinct edges of the model.

Second, the corner case we handled with \cref{lem:split-one-component-from-model}
is more complicated here, partially because we do not allow adding ears with endpoints
on the same edge. Thus, we prove a much more elaborate version of \cref{lem:split-one-component-from-model}
that grows the model until the following property is achieved: deletion of a single edge
from it does not destroy all theta minor models. 
This is enough to escape the corner case and proceed with the recursion as in 
the proof of \cref{thm:EP-cycles-algo}.

\subsection{Long 3PCs and models of $\ThetaZero$}
In this section we show how to detect a $\Xi_{\geq t}$ in a graph
and how to produce a short $t$-model of $\ThetaZero$ from it.
We will need the following slight generalization of a tripod.
\begin{definition}[partial $t$-long 3-path configuration]\label{def:partial_tL3PC}
	Let $t$ be a positive integer and let $G$ be a graph.
	A \emph{partial $t$-long $3$-path configuration} is a set of three induced paths $P_1$, $P_2$, $P_3$,
	in $G$, each of length exactly $t+1$ (i.e., with $t$ internal vertices),
	that are pairwise disjoint and anti-adjacent, except for the following: 
	each path $P_i$ has a designated one endpoint $s_i$ as a start and the second endpoint $t_i$ as an end and we require that the starts are either all equal or all pairwise
    adjacent (i.e., form a triangle) and we allow each pair of ends to be equal or adjacent.

	The \emph{shell} of the partial $t$-long 3-path configuration $\mathcal{P}= \{P_1,P_2,P_3\}$
	is defined as 
	 \[ \mathrm{Shell}(\mathcal{P}) = N[V(P_1) \cup V(P_2) \cup V(P_3) \setminus \{t_1,t_2,t_3\}] \setminus \{t_1,t_2,t_3\}. \]
	We say that $\mathcal{P}$ is \emph{promising} if $t_1$, $t_2$, and $t_3$ lie in the same connected
	component of $G-\mathrm{Shell}(\mathcal{P})$.
\end{definition}
Clearly, each tripod with paths of length exactly $t+1$ is a partial $t$-long 3-path configuration.
We have the following immediate observation.
\begin{lemma}\label{lem:xi-to-partial-l3pc}
	Let $P_1$, $P_2$, $P_3$ be a $\Xi_{\geq t}$ in a graph $G$.
	Then, the triple consisting of the three prefixes of the paths $P_1$, $P_2$, $P_3$
    of length exactly $t+1$ is a promising partial $t$-long 3-path configuration.
\end{lemma}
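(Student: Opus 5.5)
We verify the three defining conditions of a promising partial $t$-long 3-path configuration directly from the definition of $\Xi_{\geq t}$. Let the $\Xi_{\geq t}$ consist of paths $P_1,P_2,P_3$ with starts $s_1,s_2,s_3$ and ends $e_1,e_2,e_3$. Each $P_i$ has at least $t$ internal vertices, so it has length at least $t+1$. Let $P_i'$ be the prefix of $P_i$ of length exactly $t+1$ starting at $s_i$, and let $t_i$ be its last vertex.

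\emph{Step 1: $\{P_1',P_2',P_3'\}$ is a partial $t$-long 3PC.} Each $P_i'$ is an induced path, since it is a subpath of the induced path $P_i$. The starts $s_i$ are unchanged, so they are all equal or pairwise adjacent. Disjointness and anti-adjacency hold away from the allowed exceptions, because they are inherited from the $P_i$. The exceptions in the $\Xi_{\geq t}$ only concern the starts and the ends $e_i$. If $t_i \neq e_i$, then $t_i$ is an internal vertex of $P_i$, so it does not touch the other paths except through these already permitted exceptions. If $t_i=e_i$, that is, $P_i$ has length exactly $t+1$, then the ends are allowed to be equal or adjacent, which is exactly what the partial definition permits for ends. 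One small point needs checking: the three ends $e_i$ are all equal or pairwise adjacent, and this satisfies the weaker pairwise condition on ends. Likewise, $t_i$ and $t_j$ with $t_i=e_i$ and $t_j\neq e_j$ cannot touch, since $t_j$ is internal to $P_j$.

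\emph{Step 2: promising.} Let $T=\{t_1,t_2,t_3\}$ and $S=\mathrm{Shell}$. We exhibit a connected subgraph of $G-S$ that contains $T$. Let $Q_i=P_i[t_i,e_i]$ be the remaining suffix, and let $W=\bigcup_i V(Q_i)$. First, $W$ is connected in $G$. Each $Q_i$ is connected, and the ends $e_1,e_2,e_3$ are all equal or pairwise adjacent, so the three suffixes are glued together at the ends. Second, $W\cap S=\emptyset$. Write $A=\bigcup_i V(P_i')\setminus T$; this set consists of the vertices of the prefixes other than the $t_i$, i.e.\ the starts together with the first $t$ internal vertices of each path. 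A vertex $w\in W\setminus T$ is a vertex of some $P_i$ lying after $t_i$. Such a $w$ is not adjacent to any vertex of $A$ within its own path, because $P_i$ is induced and $w$ sits at least two positions after the vertex of $P_i$ immediately preceding $t_i$. It is also not adjacent to vertices of $A$ on the other paths: the only touching allowed between different paths is between the starts and between the ends. Both exceptions are harmless. A start $s_j$ touches another start only, and a start $s_j\in A$ has length-$\ge t+1\ge 2$ separation from $e_i$ unless $P_j$ is trivial, which it is not. Vertices of $T$ themselves are excluded from $S$ by definition. Hence $W\subseteq V(G)\setminus S$ is connected and contains $T$, so $t_1,t_2,t_3$ lie in one component of $G-S$.

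The only delicate point is the degenerate case where some $t_i=e_i$, or where a start equals or is adjacent to another path's vertex near the ends. This is a case check on adjacency exceptions using the length $\ge t+1\ge 2$ and the inducedness of each $P_i$. I expect it to be routine, but it is the step that needs care.
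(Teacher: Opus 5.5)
Your proof is correct and follows the paper's argument: the partial-3PC conditions are inherited from the $\Xi_{\geq t}$, and the union $Q_1\cup Q_2\cup Q_3$ of the suffixes from $t_i$ to the ends is connected, contains $t_1,t_2,t_3$, and avoids the shell. In the cross-path case, the clean reason is that no vertex of $W\setminus T$ is a start and no vertex of $A$ is an end, so neither permitted exception can apply; your sentence about ``length-$\ge t+1\ge 2$ separation'' is a garbled way of saying this.
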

\begin{proof}
  The claim that these prefixes form a $t$-long $3$-path configuration is immediate.
  To see that it is promising, note that if 
  $P_1'$, $P_2'$, and $P_3'$ are the said prefixes, $t_i$ be the end of $P_i'$,
  and $Q_i$ is the other part of $P_i$ starting from $t_i$ to the end, then
  $Q_1 \cup Q_2 \cup Q_3$ is disjoint with $\mathrm{Shell}(\{P_1', P_2', P_3'\})$,
  connected, and contains $t_1$, $t_2$, and $t_3$.
\end{proof}
In the other direction, we observe that a promising partial $t$-long $3$-path configuration
extends to a $\Xi_{\geq t}$ or, even more, to a short $t$-model of $\ThetaZero$. 

\begin{lemma}\label{lem:extend_tripod_to_theta}
	Let $G$ be a graph, let $t$ be a positive integer, and let $\mathcal{P} = \{P_1, P_2, P_3\}$
	be a promising partial $t$-long 3-path configuration in $G$.
	Then, $\mathcal{P}$ can be extended to a short $t$-model of $\ThetaZero$ such that $|R|\leq 6t+18$.
	Moreover, such a model can be constructed in polynomial time in the size of $G$.
\end{lemma}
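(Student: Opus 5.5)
We plan to mimic the proof of \cref{lem:init-cycle}: among all ways to close the promising configuration into a $\Xi_{\geq t}$ we choose a shortest one, and shortness of the resulting model will follow from its minimality. Let $s$ denote the common start (or the triangle of starts) and $t_1,t_2,t_3$ the ends. Since $\mathcal{P}$ is promising, $t_1,t_2,t_3$ lie in one component $C$ of $G-\mathrm{Shell}(\mathcal{P})$. We pick an inclusion-wise minimal connected induced subgraph of $C$ containing $t_1,t_2,t_3$; by \cref{lem:make-tripod} it is a tripod $T$ with ends $t_1,t_2,t_3$. Gluing $T$ to $P_1,P_2,P_3$ yields a $\Xi_{\geq t}$: paths $P_i$ extended by the legs of $T$ meeting at the centre (or central triangle) of $T$. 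If some $t_i$ coincide or are adjacent, the tripod degenerates (some legs have length $0$), and the ends still form a common vertex or a triangle. To make it minimal, we choose $T$ so that the total number of vertices is minimum. This is computable in polynomial time by iterating over the centre (a vertex or a triangle) and taking shortest paths in $C$ from it to each $t_i$. We then repair anti-adjacency by a standard minimality argument.

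The model is then defined as follows. Let $H=\ThetaZero$ with vertices $a,b$ and three parallel edges $e_1,e_2,e_3$. We set $\eta(a)$ to be the start(s) together with the first vertex of each $P_i$ after the start; in the common-vertex case this is $4$ vertices. If needed we can instead take $\eta(a)=\{s\}$, since the size bound $\le 4$ allows either choice. We define $\eta(b)$ symmetrically at the tripod centre. Each $\eta(e_i)$ is the path between these bags, of length more than $t$ because the prefix $P_i$ already contributes $t$ internal vertices. The guard set $R$ consists of both vertex bags and the length-$(t+3)$ prefixes of each edge bag from both ends, giving $|R|\le 6(t+3)=6t+18$ after accounting for overlaps with the bags. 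Conditions (i)--(v) of \cref{def:model} follow since the three resulting paths form a $\Xi_{\geq t}$.

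Shortness is the main obstacle, and we would mirror the two arguments of \cref{lem:init-cycle}, using minimality of the chosen closure. First, take $v\in N(\mH)\setminus N[R]$ with neighbours spread beyond a $3$-vertex subpath. If both neighbours lie on the same $\eta(e_i)$ at distance $>2$, shortcutting through $v$ shortens that leg while keeping length $>t$, since the guarded prefixes stay intact. If they lie on two different edge bags, the shortcut would need to produce a shorter $\Xi_{\geq t}$; the difficulty here is that the prefixes $P_i$ are fixed, so the shortcut must only modify the tripod part. We handle this by observing that $v$ avoids $N[R]\supseteq\mathrm{Shell}$ near the starts, so $v\in C$, and rerouting the tripod through $v$ gives a smaller connected set containing $t_1,t_2,t_3$, contradicting minimality. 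Second, take an ear $Q$ of distance at least $t+3$ and length at most $t$. Its interior avoids $N[\mH]$, so it also lies in $C$. Replacing the portion of the tripod between its attachment points by $Q$ (or rerouting one leg through $Q$) gives a strictly smaller tripod, again a contradiction; the distance condition guarantees that the replaced portion is longer than $Q$. The delicate case analysis will occur when an attachment point lies on the fixed prefix part $P_i\setminus\{t_i\}$, and we expect the bulk of the work to be there. In that case a short ear landing on $P_i$ would cut off the start region, and we would argue via the guard prefixes of length $t+3$ together with the shell that such attachments fall in $N[R]$, so they cannot occur.
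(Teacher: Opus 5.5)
Your plan is essentially the paper's proof. You glue to the $P_i$ a minimal tripod $T$ taken in the component of $G-\mathrm{Shell}(\mathcal{P})$ containing $t_1,t_2,t_3$, and let the vertex bags be the starts and the centre of $T$. Use $\eta(a)=\{s_1,s_2,s_3\}$ here: your four-vertex option violates condition (ii) when, e.g., $t_1=t_2=t_3$. You then guard the forced $(t+3)$-prefixes and refute any bad vertex, or short ear of distance at least $t+3$, by producing a smaller connected set in that component containing the three ends. Two of your choices are more careful than the paper's text: taking $T$ of minimum cardinality, since an exchange that inserts a shortcut vertex or an ear does not contradict mere inclusion-wise minimality (which is what the paper literally states), and treating a vertex adjacent to two different legs. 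The case you expect to be delicate is vacuous: $P_i\subseteq R$, so no vertex outside $N[R]$ can attach to $P_i$.
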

\begin{proof}
	Let $s_i$ and $t_i$ be the start and end of $P_i$.
	In the $\Theta_0$ graph, we denote the three parallel edges $e_1,e_2,e_3$ and the vertices as $a,b$.
	
	We set $\eta(a)\coloneqq \{s_1, s_2, s_3\}$ .
	Recall that $\{s_1, s_2, s_3\}$ either induces a triangle or is a single vertex.
	As $t_1,t_2,t_3$ are in the same connected component of $G\setminus \mathrm{Shell}(\mathcal{P})$, 
	we can take an inclusion-wise minimal induced subgraph $T$ of $G\setminus \mathrm{Shell}(\mathcal{P})$
	that is connected and contains $t_1$, $t_2$, and $t_3$. By Lemma~\ref{lem:make-tripod}, $T$ is a tripod with ends $t_1$, $t_2$, $t_3$
	and $T$ can be found in polynomial time.
	Let $s_1', s_2', s_3'$ be the starts of the paths of $T$.
	We set $\eta(b)\coloneqq \{s_1',s_2',s_3'\}$, so we have $|\eta(b)|\le 3$.
	We set $\eta(e_i)$ to contain $s_i$, $P_i$, as well as the induced path in $T$ from $t_i$ to $s_i'$ for all $i\in[3]$.
	By the construction, $\eta(e_i)$ contains an induced path on at least $t+2$ vertices for all $i\in[3]$.
	We set the reserved set $R_0$ to contain only the vertices required by \cref{def:short-model}. 
	Note that for each edge we add at most $2t+6$ guards.
	In particular, the set of guards contains $P_i$ for all $i\in[3]$.
	We observe that by the construction all $e_1,e_2,e_3,a,b$ touch only as allowed in \Cref{def:model}.
	We also observe that $\eta_0(e_i)-R_0(e_i)$ is the shortest path between its endpoints and therefore there is no vertex in its neighborhood that sees more than a $3$-vertex subpath of it.
	Indeed, that would have been a shorter path.
	It remains to consider an ear $S$ of distance at least $t+3$ on at most $t$ vertices.
	If $S$ is an ear attached to a single edge $e_i$ for some $i\in[3]$, it is immediately a shorter path and so a contradiction.
	Hence, consider $S$ between edges $e_i, e_j$ for $i\neq j\in[3]$.
	However, as $R_0$ contains a prefix of $\eta_0(e_i,b)$ of length $t+3$, replacing this prefix with $S$ yields a tree $T$ with fewer edges, a contradiction.
	Therefore, we have obtained a short $t$-model of $\Theta_0$.
\end{proof}

With these lemmata in hand, it is easy to see that in $n^{\Oh(t)}$ time
one can check if a graph contains a $\Xi_{\geq t}$ and, in case of a positive answer,
produce a short $t$-model of $\ThetaZero$.

\begin{lemma}\label{lem:detect-L3PC}
	Given an $n$-vertex graph $G$ and an integer $t$,
	in time $n^{\Oh(t)}$ we can correctly conclude that $G$ does not contain any $\Xi_{\geq t}$
	or find a short $t$-model $\mH = (H,\eta,R)$ in $G$ of $\ThetaZero$ such that $|R|\leq 6t+18$.
\end{lemma}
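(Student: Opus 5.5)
The plan is to reduce detection of a $\Xi_{\geq t}$ to detection of a promising partial $t$-long 3-path configuration, and then invoke \cref{lem:extend_tripod_to_theta}. By \cref{lem:xi-to-partial-l3pc}, if $G$ contains a $\Xi_{\geq t}$, then $G$ contains a promising partial $t$-long 3-path configuration. Conversely, by \cref{lem:extend_tripod_to_theta}, any promising partial configuration extends in polynomial time to a short $t$-model of $\ThetaZero$ with $|R| \leq 6t+18$. Hence it suffices to decide, in time $n^{\Oh(t)}$, whether $G$ contains a promising partial configuration, and to return one if it does.

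For the search, observe that a partial $t$-long 3-path configuration $\mathcal{P}=\{P_1,P_2,P_3\}$ consists of exactly three paths on $t+2$ vertices each. The configuration is therefore a set of at most $3(t+2)$ vertices together with a designation of which vertex plays which role.
\begin{enumerate}
\item Enumerate all ordered triples of sequences of $t+2$ vertices of $G$. There are at most $n^{3(t+2)} = n^{\Oh(t)}$ of them.
\item For each triple, check in polynomial time that each sequence induces a path (consecutive vertices adjacent, non-consecutive ones non-adjacent).
\item Check that the starts are all equal or pairwise adjacent.
\item Check that the three paths are pairwise disjoint and anti-adjacent, except that ends may be equal or adjacent and starts behave as required. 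When the starts coincide, allow the shared first vertex.
\item Compute $\mathrm{Shell}(\mathcal{P})$ directly from its definition.
\item Test by a BFS in $G-\mathrm{Shell}(\mathcal{P})$ whether $t_1,t_2,t_3$ lie in one connected component.
\end{enumerate}
If some triple passes all these checks, apply \cref{lem:extend_tripod_to_theta} to it and output the resulting model. If none passes, \cref{lem:xi-to-partial-l3pc} lets us correctly conclude that $G$ contains no $\Xi_{\geq t}$.

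The total running time is $n^{\Oh(t)}$ times a polynomial, which is $n^{\Oh(t)}$. The only point needing care is that the enumeration matches \cref{def:partial_tL3PC} exactly, including the degenerate cases where the starts are equal or the ends are equal. This is routine bookkeeping, since the definitions were set up so that both lemmas apply verbatim, and I expect no genuine obstacle.
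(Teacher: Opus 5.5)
Your proposal is correct and is essentially the paper's proof: enumerate all partial $t$-long 3-path configurations in $n^{\Oh(t)}$ time, test each for being promising by a connectivity check in $G-\mathrm{Shell}(\mathcal{P})$, use \cref{lem:xi-to-partial-l3pc} to justify the negative answer, and use \cref{lem:extend_tripod_to_theta} to build the short model otherwise. Your count of at most $3(t+2)$ vertices per configuration is actually more accurate than the paper's stated ``$3t+1$ or $3t+3$'', though this does not affect the $n^{\Oh(t)}$ bound.
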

\begin{proof}
	As every partial $t$-long $3$-path configuration involves $3t+1$ or $3t+3$ vertices,
	in $n^{\Oh(t)}$ time we can enumerate all partial $t$-long $3$-path configurations
	in $G$ and check which of them is promising.

	\cref{lem:xi-to-partial-l3pc} allows us to return a negative answer if no 
	promising partial $t$-long $3$-path configuration is found. 
	Otherwise, any promising partial $t$-long $3$-path configuration
	can be turned in polynomial time into the desired short $t$-model of $\ThetaZero$ by 
	\Cref{lem:extend_tripod_to_theta}.
\end{proof}

\subsection{Initial model}
We now present an initial process that augments a short $t$-model
obtained from \cref{lem:detect-L3PC} to achieve the following property:
deletion of a single edge from it does not destroy all theta minor models. 

This initial process sometimes gets stuck, realizing that it already produced
an object suitable for recursion. This is formalized as a winning scenario:

\begin{definition}[winning scenario]\label{def:winning-scenario}
	Let $C$, $t$ be positive integers and $G$ be a graph.
	A \emph{$C$-winning scenario} for $G$ and $t$ 
	is a set $X \subseteq V(G)$ of size at most $C \cdot t$
	such that one of the following is satisfied: 
	\begin{enumerate}
		\item $G-N[X]$ has no $\Xi_{\geq t}$
		\item there exists a connected component $D$
		of $G-N[X]$ such that both $G[D]$ and $G-N[D]$ contain a $\Xi_{\geq t}$.
	\end{enumerate}
\end{definition}

\begin{lemma}\label{lem:detect-win}
	Given an $n$-vertex graph $G$ and positive integers $C$, $t$, 
	one can in $n^{\Oh(Ct)}$ time find a $C$-winning scenario for $G$ and $t$,
	or correctly conclude that none exists.
\end{lemma}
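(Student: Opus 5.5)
The plan is brute force: enumerate every candidate set $X \subseteq V(G)$ with $|X| \leq Ct$. There are at most $\sum_{i \leq Ct} \binom{n}{i} = n^{\Oh(Ct)}$ of them. For each candidate we test the two conditions of \cref{def:winning-scenario} in time $n^{\Oh(t)}$. The total running time is then $n^{\Oh(Ct)} \cdot n^{\Oh(t)} = n^{\Oh(Ct)}$. We return the first $X$ that passes either test, and report that no $C$-winning scenario exists if none passes. Correctness is immediate, since the definition is exactly the conjunction of the size bound with one of the two checked properties.

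The key tool is \cref{lem:detect-L3PC}. Given any graph $G'$ on at most $n$ vertices, it decides in $n^{\Oh(t)}$ time whether $G'$ contains a $\Xi_{\geq t}$. On a positive instance it returns a short $t$-model of $\ThetaZero$ rather than a $\Xi_{\geq t}$ itself. Existence is all we need here, though: the lemma's negative answer is certified via \cref{lem:xi-to-partial-l3pc}, and a positive answer yields a $\Xi_{\geq t}$ through the model, by \cref{lem:xis-from-model} applied to the single theta $\ThetaZero$. So the lemma serves as an exact $\Xi_{\geq t}$-detection oracle.

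For a fixed $X$, we first compute $G - N[X]$ and run the oracle on it. If it reports no $\Xi_{\geq t}$, condition~1 holds and we output $X$. Otherwise we compute the connected components of $G - N[X]$; there are at most $n$ of them. For each component $D$ we run the oracle twice, once on $G[D]$ and once on $G - N[D]$, where $N[D]$ is the closed neighborhood in the whole graph $G$ as the definition prescribes. If both calls are positive for some $D$, condition~2 holds and we output $X$. Each fixed $X$ therefore costs $\Oh(n)$ oracle calls plus polynomial overhead, which is $n^{\Oh(t)}$ in total.

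There is no real obstacle. The only points needing care are that \cref{lem:detect-L3PC} must act as an exact detector, which its statement guarantees, and that the constants stay in the exponent, giving $n^{\Oh(Ct)}$ overall. If exponents were a concern, one could note that the $\Oh(t)$ hidden in \cref{lem:detect-L3PC} comes from enumerating $3t+3$ vertices, so it is dominated by $Ct$ for $C \geq 1$.
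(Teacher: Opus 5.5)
Your proposal is correct and is essentially the paper's argument. Like the paper, you enumerate all candidate sets $X$ with $|X|\le Ct$ by brute force, then use \cref{lem:detect-L3PC} as an exact $\Xi_{\geq t}$-detector on $G-N[X]$, and on $G[D]$ and $G-N[D]$ for every component $D$ of $G-N[X]$.
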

\begin{proof}
	Within the given running time bound, we can iterate over all candidates for the set $X$.
	For each set $X$ and each connected component $D$ of $G-N[X]$, we use 
	\cref{lem:detect-L3PC} to check if $G[D]$ and/or $G-N[D]$ contain a $\Xi_{\geq t}$.
\end{proof}

We are now ready to present the analog of \cref{lem:split-one-component-from-model}.

\begin{lemma}\label{lem:first-ears}
	There exists a universal constant $C$ such that 
	for every positive integer $t$ and an $n$-vertex graph $G$, if $G$ and $t$ does not admit a $C$-winning scenario,
	then one can in $n^{\Oh(t)}$ time find a $t$-short model $\mH = (H,\eta,R)$ in $G$
	such that $H$ is subcubic of minimum degree at least $2$, $|E(H)| = \Oh(1)$, $|R| = \Oh(t)$
	and for every $e \in E(H)$, $H \setminus \{e\}$ contains $\Theta$ as a minor.
\end{lemma}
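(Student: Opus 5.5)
We start from the short $t$-model $\mH_0$ of $\ThetaZero$ given by \cref{lem:detect-L3PC}. It exists because otherwise $X=\emptyset$ is a winning scenario of the first kind, and it has $|R_0|\le 6t+18$. We then grow it by a bounded number of proper ear additions, using \cref{lem:short-model-preserved}. The invariant we aim for is: $H$ is subcubic, has minimum degree at least $2$, and $|E(H)|=\Oh(1)$; this holds for $\ThetaZero$. Call an edge $e\in E(H)$ \emph{critical} if $H\setminus\{e\}$ is a cactus, i.e., contains no theta. In $\ThetaZero$ every edge is critical. The goal is to reach a model with no critical edge. Each proper ear addition raises $|R|$ by at most $2t+12$ (\cref{obs:ear-decomp-mostly-cubic}), so after $\Oh(1)$ steps we still have $|R|=\Oh(t)$.

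\textbf{Step 1: find the ear.} Suppose the current model $\mH$ has a critical edge. Partition $E(H)=A\uplus B$ so that adding any ear between $A$ and $B$ makes progress. A natural choice: since $H\setminus e$ is a cactus, take the block structure of $H\setminus e$ and let $A$ be the edges of one theta-relevant part containing $e$, and $B$ the rest. In $\ThetaZero$ we take $A=\{e_1\}$ and $B=\{e_2,e_3\}$. An ear between $A$ and $B$ creates a new cycle through $e$ together with an alternative route avoiding $e$. Then every edge that was critical either stops being critical or becomes one of the newly subdivided edges, and a potential function strictly increases. One such potential is the number of pairs of edge-disjoint cycles, or the count of edges lying in some theta of $H\setminus e$; we pick the simplest one that works and show it rises in a bounded number of steps until no critical edge is left. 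Since the shortest $A$-$B$ ear is added via \cref{lem:short-model-preserved}, the model stays short and subcubic, and the minimum degree stays at least $2$ because subdividing and attaching an ear only creates degree-$3$ vertices.

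\textbf{Step 2: handle the case with no ear.} If no ear between $A$ and $B$ exists, we must exhibit a $C$-winning scenario, contradicting the hypothesis. As in \cref{lem:max-model-no-inter-ear}, every component of $G-N[R]$ then touches only $N[\eta(A)]$ or only $N[\eta(B)]$. Now apply the argument of \cref{lem:split-one-component-from-model} along the path $\eta(e)$ for a critical $e$, taking ``important'' to mean containing a $\Xi_{\geq t}$, and set $X=R\cup P_D$ with $|X|=\Oh(t)$. Either no component of $G-N[X]$ contains a $\Xi_{\geq t}$ (scenario 1), or some component $D$ contains one while $G-N[D]$ still contains one (scenario 2). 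The $\Xi_{\geq t}$ outside $N[D]$ comes either from a second important component or from the model itself: since $e$ is critical and $D$ touches only one side, the part of $\mH$ avoiding $N[D]$ still carries a theta, and \cref{lem:xis-from-model} turns it into a $\Xi_{\geq t}$.

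\textbf{Main obstacle.} The hardest part is Step 2's case analysis. We must make sure that a component $D$ which contains a $\Xi_{\geq t}$ and attaches to the model near the critical edge still leaves a theta in $H$ that is anti-adjacent to $D$. This requires choosing $A$, $B$ and the cut path $P_D$ carefully, so that $D$ touches only edges whose removal does not kill all thetas; otherwise we would need further cutting with more guard vertices. A secondary issue is the bookkeeping behind the potential in Step 1, which ensures termination after $\Oh(1)$ steps, and hence $|E(H)|=\Oh(1)$ and $|R|=\Oh(t)$. The $n^{\Oh(t)}$ running time follows from \cref{lem:detect-L3PC} and \cref{lem:short-model-preserved}, since each step is polynomial apart from $\Xi_{\geq t}$ detection, and there are $\Oh(1)$ steps.
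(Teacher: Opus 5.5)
Your Step~2 has a genuine gap. \cref{lem:split-one-component-from-model} is stated for a \emph{maximal} short $t$-model. Its key claim is that no component of $G-N[R]-N[M_i]$ meets both $N[L_i]$ and $N[R_i]$, and this is exactly the nonexistence of an ear from $\eta(e)$ to itself of distance at least $t+3$. Your construction only adds ears between distinct edges, which is rightly so, since self-ears create parallel edges. So your model is only proper-maximal. The unique component $D$ containing a $\Xi_{\geq t}$ may then attach to $\eta(e)$ at two far-apart places. In that case every cut $P_D$ of $t+5$ vertices leaves a single important component that still touches $\eta(e)$. Neither outcome of the lemma holds, and no winning scenario is certified.

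Your fallback sentence does not rescue this. If $e$ is critical and $D$ meets $N[\eta(e)]$ at all, then whatever part of $\mH$ avoids $N[D]$ lies in $H\setminus e$ (possibly with $e$ subdivided), which is a cactus. So the model cannot supply the second $\Xi_{\geq t}$. To use the model, you must force the important component to be disjoint from $N[V(\mH)]$ altogether. You flag this as the main obstacle but do not resolve it. Step~1's potential is also left undefined, though for $\ThetaZero$ it is unnecessary: a single ear between two distinct edges already gives a graph with no critical edge.

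The missing idea is to turn self-ears into proper ears by sliding a subdivision point along $\eta(e)$.

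For every subpath $S$ of $\eta(e)$ on $2t+7$ vertices, subdivide $e$ at its middle vertex $o_S$ and add $S$ to the guards. If possible, add a shortest ear $P(S)$ between $uo_S$ and $vo_S$. This is now a proper ear, so \cref{lem:short-model-preserved} keeps the model short. In the resulting $H'(S)$ only the two end segments $ux_u$ and $vx_v$ are critical. If some further proper ear exists for some $S$ with $P(S)\neq\emptyset$, adding it yields a model with no critical edge (a short case check), and you output that model.

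Otherwise every $\mH'(S)$ is proper-maximal. Absent a winning scenario, the unique important component $D(S)$ of $G-N[R'(S)]$ then touches exactly one of the two halves of $e$ at $o_S$; call $S$ of $u$-type or $v$-type accordingly. The slider nearest $u$ is of $v$-type, since $\eta(uo_S)\subseteq S$, and symmetrically the one nearest $v$ is of $u$-type. Hence two consecutive sliders switch type. Let $X$ be the union of their two guard sets, still of size $\Oh(t)$. Then any unique important component of $G-N[X]$ avoids $N[V(\mH)]$, which is a winning scenario of the second type since $G[V(\mH)]$ contains a $\Xi_{\geq t}$.

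In particular, at most three ears are ever added. This is what gives $|E(H)|=\Oh(1)$ and $|R|=\Oh(t)$ without any potential-function argument.
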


\begin{proof}
	The constant $C$ is a sufficiently large constant, stemming from the proof below.
	Instead of providing it explicitly, we use the big-$\Oh$ notation
	and argue that some sets in the proof are of size $\Oh(t)$.
	The final constant is $C$ needs to be big enough
	so that $C\cdot t$ is larger than all the $\Oh(t)$-bounds appearing in the proof.

	As $G$ and $t$ does not admit a $C$-winning scenario, \Cref{lem:detect-L3PC} allows us to find a short $t$-model $\mH=(H, \eta, R)$ of $\Theta_0$.
	If $\mH$ is not proper-maximal, we add any shortest ear between two different edges of $H$. 
	In this case, it is straightforward to check that the produced short $t$-model satisfies the requirements
	of the lemma: after deleting any edge of $H$, the remaining graph still contains $\Theta$ as a minor (see \Cref{fig:theta_with_one_inter_edge_ear}).
	Clearly, $H$ is always cubic, $|E(H)|=4$ and $|R|=\Oh(t)$, as adding a single ear increases the size $R$ by $\Oh(t)$.
	\begin{center}
		\begin{figure}[ht]
			\centering
			\begin{tikzpicture}[scale=0.85]
				\tikzstyle{bluenode}=[draw,circle,fill=black,minimum size=3pt,inner sep=0pt]
				\tikzstyle{texte} =[fill=white, text=black]
				\draw (-1.5,0) node[bluenode] (a) [] {};
				\draw (1.5,0) node[bluenode] (b) [] {};
				\draw (0,0) node[bluenode] (c) [] {};
				\draw (0,0.75) node[bluenode] (d) [] {};
				
				\draw[bend left=20] (a) edge []  node [] {} (d);
				\draw[bend left=20] (d) edge []  node [] {} (b);
				\draw[] (a) edge []  node [] {} (c);
				\draw[] (b) edge []  node [] {} (c);
				\draw[] (d) edge []  node [] {} (c);
				\draw[ bend right=60] (a) edge []  node [] {} (b);
			\end{tikzpicture}
			\caption{A $\Theta_0$ with an ear connecting distinct edges added.}
			\label{fig:theta_with_one_inter_edge_ear}
		\end{figure}
	\end{center}
	
	Suppose then that $\mH$ is proper-maximal, and consider the connected components of $G-N[R]$.
	If no component contains a $\Xi_{\geq t}$,
	then $X \coloneqq R$ is a $C$-winning scenario of the first type for $G$ and $t$. If at least two components contain a $\Xi_{\geq t}$, $X \coloneqq R$ is a $C$-winning scenario of the second type.
	Thus, we may assume that exactly one component, say $D$, of $G-N[R]$ contains a $\Xi_{\geq t}$.
	If $D$ is disjoint from $N[V(\mH)]$, then again by taking $X\coloneqq R$ we satisfy the second variant of a winning scenario,
	as $G[V(\mH)]$ contains a $\Xi_{\geq t}$.
    Otherwise, by \cref{lem:max-model-no-inter-ear}, 
	$D$ is disjoint from $N[\eta(v)]$ for every $v \in V(H)$ and 
	there exists exactly one $e=uv\in E(H)$ such that $N[\eta(e)]$ is intersected by $D$.
	
	A subpath of $2t+7$ vertices of $\eta(e)$ is called \emph{slider} of $\eta(e)$, and we denote the set of sliders as $\sS$.
	For any slider $S$ let us denote its middle vertex as $o_S$. 
	If $\sS$ is empty, then $\eta(e)$ has at most $2t+6$ vertices.
	Since $R$ includes both length $t+3$ prefixes of $\eta(e)$, we would have $\eta(e) \subseteq R$, contradicting $D \cap N[\eta(e)] \neq \emptyset$.
	Hence, $\sS$ is nonempty.
	
	For every $S\in\sS$ we can extend $\mH$ by subdividing the edge $uv$ --- adding a vertex $o_S$ to $H$ and adding $S$ to $R$ --- let us denote this model as $\mH(S)$.
	For any such $\mH(S)$, if we can add an ear joining edges $uo_S$ and $vo_S$, we let $P(S)$ be any shortest possible such ear, and denote the resulting model as $\mH'(S) = \mH(S)+P(S)$; the endpoints of $P(S)$ are called $x_u(S)$ and $x_v(S)$.
	If no ear can be added, we take $P(S)=\varnothing$ and $\mH'(S) = \mH(S)$.
	Note that since $\mH$ is a short $t$-model, and the endpoints of $P(S)$ are at distance at least $t+3$, \Cref{lem:short-model-preserved} ensures that $\mH'(S)$ is also a short $t$-model.
	Denote $\mH'(S) = (H'(S),\eta'(S),R'(S))$.
	Note that $|R'(S)| = \Oh(t)$.
	
	We first deal with the case where there exists $S \in \sS$ such that $P(S) \neq \emptyset$ and $\mH'(S)= \mH(S)+P(S)$
	is not proper-maximal.
	Then, add to it a shortest ear connecting two distinct edges, and let $\mH''(S) = (H''(S),\eta''(S),R''(S))$ be the resulting short $t$-model.
	We claim that $\mH''(S)$ can be returned as the outcome of the lemma. It is clear that $|H''(S)| = \Oh(1)$, and $|R''(S)| = \Oh(t)$.
	It now remains to verify that, after deleting any edge in $H''(S)$, the remaining subgraph still contains a $\Theta$.

	This is a tedious case-by-case check, see \cref{fig:theta_with_two_ears}.
	As $S$ is fixed in the argument of this paragraph, 
	for brevity we drop the argument $S$ from notation like $H'(S)$, $P(S)$, $x_u(S)$, etc.
	Let $f$ be the new edge added to $\mH'$ to obtain $\mH''$.
	It is straightforward to check that $ux_u$ and $vx_v$ are the only two edges of $H'$ whose
	deletion destroys all theta minors in $H'$. Hence, we need only to verify the deletion of these two edges in $H''$
	(or possibly parts of them, if they are subdivided in $H''$).
	As $\mH$ is proper-maximal, $f$ connects two edges among $\{ux_u, x_uo_S, o_Sx_v, x_vv, P\}$. 
	If $f$ connects two edges among $\{x_uo_S, o_Sx_v, P\}$, then $H''$ contains a theta
	disjoint from $ux_u$ and $vx_v$ and we are done.
	By symmetry, let us assume that $f$ has one endpoint in $ux_u$;
	let $y_u$ be the resulting subdivision vertex on $ux_u$ and let $y_v$ be the subdivision vertex at the other end of $f$.
	The deletion of $uy_u$ from $H''$ leaves a theta formed by $f$, $y_ux_u$, $x_uo_S$, $o_Sx_v$, $P$,
	and possibly $x_vy_v$ if the other endpoint of $f$ is on $vx_v$.
	The same theta is left if $vx_v$ or $vy_v$ (in case $y_v$ is on $vx_v$) is deleted.
	The deletion of $y_ux_u$ from $H''$ leaves a path from $u$ to $v$ via $uy_u$, $f$, and $y_v$,
	thus a theta remains with degree-3 vertices $u$ and $v$. 
	A symmetric analysis handles the case when $y_v$ lies on $x_vv$ and $x_vy_v$ is deleted.
	This finishes the proof of the claim that $\mH''(S)$ is a valid outcome of the lemma in the case where $\mH'(S)$ is not proper-maximal.
	
	\begin{figure}[ht]
		\centering
		\begin{tikzpicture}[scale=1.5]
			\tikzstyle{bluenode}=[draw,circle,fill=black,minimum size=4pt,inner sep=0pt]
			\tikzstyle{texte} =[fill=white, text=black]
			\draw (-1.5,0) node[bluenode] (a) [] {};
			\draw (1.5,0) node[bluenode] (b) [] {};
			\draw (0,0) node[bluenode] (c) [] {};
			\draw[left] (a) node {$u$};
			\draw[right] (b) node {$v$};
			\draw[below] (c) node {$o_S$};
			
			\draw (-0.65,0) node[bluenode] (e1) [] {};
			\draw (0.65,0) node[bluenode] (e2) [] {};

			\draw[below] (e1) node {$x_u(S)$};
			\draw[below] (e2) node {$x_v(S)$};
			
			\draw[bend left=70] (a) edge []  node [] {} (b);
			\draw[] (a) edge []  node [] {} (c);
			\draw[] (b) edge []  node [] {} (c);
			\draw[bend right=70] (a) edge []  node [] {} (b);
			
			\draw[bend left=70] (e1) edge [] node [] {} (e2);

			\draw[line width=5pt,red] (a) -- (e1);
			\draw[line width=5pt,red] (b) -- (e2);
		\end{tikzpicture}\\
		\begin{tikzpicture}[scale=1.5]
			\tikzstyle{bluenode}=[draw,circle,fill=black,minimum size=4pt,inner sep=0pt]
			\tikzstyle{texte} =[fill=white, text=black]
			\draw (-1.5,0) node[bluenode] (a) [] {};
			\draw (1.5,0) node[bluenode] (b) [] {};
			\draw (0,0) node[bluenode] (c) [] {};
			
			\draw (-1,0) node[bluenode] (d1) [] {};
			\draw (1,0) node[bluenode] (d2) [] {};
			\draw (-0.65,0) node[bluenode] (e1) [] {};
			\draw (0.65,0) node[bluenode] (e2) [] {};

			\draw[below] (d1) node {$y_u$};
			\draw[below] (d2) node {$y_v$};
			
			\draw[bend left=70] (a) edge []  node [] {} (b);
			\draw[] (a) edge []  node [] {} (c);
			\draw[] (b) edge []  node [] {} (c);
			\draw[bend right=70] (a) edge []  node [] {} (b);
			
			\draw[bend left=70] (d1) edge [] node [] {} (d2);
			\draw[bend left=70] (e1) edge [] node [] {} (e2);

		\end{tikzpicture}
		\hspace{0.3cm}
		\begin{tikzpicture}[scale=1.5]
			\tikzstyle{bluenode}=[draw,circle,fill=black,minimum size=4pt,inner sep=0pt]
			\tikzstyle{texte} =[fill=white, text=black]
			\draw (-1.5,0) node[bluenode] (a) [] {};
			\draw (1.5,0) node[bluenode] (b) [] {};
			\draw (0,0) node[bluenode] (c) [] {};
			
			\draw (-1,0) node[bluenode] (d1) [] {};
			\draw (0,0.4) node[bluenode] (d2) [] {};
			\draw (-0.65,0) node[bluenode] (e1) [] {};
			\draw (0.65,0) node[bluenode] (e2) [] {};

			\draw[below] (d1) node {$y_u$};
			\draw[above right] (d2) node {$y_v$};
			
			\draw[bend left=70] (a) edge []  node [] {} (b);
			\draw[] (a) edge []  node [] {} (c);
			\draw[] (b) edge []  node [] {} (c);
			\draw[bend right=70] (a) edge []  node [] {} (b);
			
			\draw[bend left=70] (d1) edge [] node [] {} (d2);
			\draw[bend left=70] (e1) edge [] node [] {} (e2);

		\end{tikzpicture}
		\caption{Above: $\mH'(S)$ for $P(S) \neq \emptyset$ has two edges whose deletion destroys all thetas --- these are marked red.
		Below: the two most interesting cases how an ear can be added to $\mH'(S)$.}
		\label{fig:theta_with_two_ears}
	\end{figure}

	We may now handle the case where $\mH'(S)$ is proper-maximal for every $S \in \sS$. 
	As $D$ is the only component of $G-N[R]$ containing $\Xi_{\geq t}$ and $R \subseteq R'(S)$,
	every connected component of $G-N[R'(S)]$ containing a $\Xi_{\geq t}$ is in fact
	a connected component of $G[D\setminus N[R'(S)]]$.
	If for some $S \in \sS$ there is no such component, or if there are at least two, then $X\coloneqq R'(S)$ is a $C$-winning scenario of the first or second type respectively.

	Therefore, we may further we assume that for any slider $S$, there exists exactly one component $D(S)$ of $G[D-N[R'(S)]]$
	that contains a $\Xi_{\geq t}$.
	By \cref{lem:max-model-no-inter-ear}, $D(S)$ 
	intersects the neighborhood of at most one edge bag (and no neighborhood of a vertex bag) of the model $\mH'(S)$
	As $\mH$ contains a theta, $G[V(\mH)]$ contains a $\Xi_{\geq t}$;
	if $D(S)$ does not intersect a neighborhood of an edge of $H$, then 
	$G-N[D(S)]$ contains the $\Xi_{\geq t}$ contained in $G[V(\mH)]$ and we are done with the second variant
	of the winning scenario for $X \coloneqq R'(S)$. 
	
	Hence, as $D(S) \subseteq D$ and $e$ is the unique edge of $H$ such that $D$ intersects $N[\eta(e)]$,
	$D(S)$ intersects exactly one of $N[\eta(uo_S)]$ or $N[\eta(vo_S)]$.
	Depending on which one is intersected, we say that $S$ is of $u$-type or $v$-type, respectively.
	(Note that this definition is correct regardless of whether $P(S)$ is empty or not; if $P(S) \neq \emptyset$,
	then $uo_S$ and $vo_S$ are subdivided with $x_u(S)$ and $x_v(S)$, respectively.)
	
	Note that the slider which is closest to $u$ (i.e. minimizing the length of $uo_S$) is of $v$-type, as $uo_S$ is then contained in $R'(S)$.
	Similarly, the slider closest to $v$ is of $u$-type.
	Therefore, we can find a pair of consecutive sliders where the one (denoted $S_u$) closer to $v$ is of $u$-type
	and the one (denoted $S_v$) closer to $u$ is of $v$-type.
	
	Consider now $X \coloneqq R(S_u) \cup R(S_v)$. We claim that $X$ is a $C$-winning scenario; clearly $|X| = \Oh(t)$.
	By contradiction, assume there exists exactly one component $D_X$ of $G-N[X]$ that contains a $\Xi_{\geq t}$. 
	We have $D_X \subseteq D(S_u) \cap D(S_v)$. 
	As $D(S_u)$ intersects $N[V(\mH)]$ only in $N[\eta(uo_{S_u})]$ and $D(S_v)$ intersects
	$N[V(\mH)]$ only in $N[\eta(vo_{S_v})]$, the intersection of $D_X$ with $N[V(\mH)]$
	is contained in $N[\eta(uo_{S_u}) \cap \eta(vo_{S_v})]$. 
	However, as the model is short, $N[\eta(uo_{S_u}) \cap \eta(vo_{S_v})] \subseteq N[R(S_u) \cup R(S_v)]$.
	Hence, $D_X$ is disjoint from $N[V(\mH)]$. As $G[V(\mH)]$ contains
	a $\Xi_{\geq t}$ and this $\Xi_{\geq t}$ is anti-adjacent to $D_X$, we are in the second variant of the winning scenario.
\end{proof}

\subsection{Proof of \cref{thm:EP-3LPC-algo}}
With \cref{lem:first-ears}, we are ready to prove \cref{thm:EP-3LPC-algo}.

\algEPforXis*
\begin{proof}
	\newcommand{\lsimt}{\lambda_{\Theta}}

	Our algorithm follows the same strategy as the one described in~\Cref{thm:EP-cycles-algo}, except for its corner cases which rely on the preliminary results of the current section.
	Let $\lsimt > 0$ be such that $\ftheta(k) \leq \lsimt k \log (2k)$ for every integer $k \geq 1$,
	where $\ftheta$ is the function of~\Cref{thm:thetas-in-cubic}. 
	Let $C$ be the universal constant given by~\Cref{lem:first-ears}, and $c_R,c_E$ be the constants hidden in the $|R| = \Oh(t)$ and $|E(H)| = \Oh(1)$ notation in~\Cref{lem:first-ears}.
	The algorithm returns an integer $k$, a $k$-induced packing $\cXi$, and some $X$ of size bounded by $\lambda k t \log(2k)$, with $\lambda = \max(C, 32 (c_R + 28 \lsimt + 182c_E))$.
	
	We first run~\Cref{lem:detect-L3PC} to check if $G$ contains a $\Xi_{\geq t}$.
	If not, we return $k=0$, an empty \Lpgt{} packing, and $X = \emptyset$.
	Otherwise, we obtain a short $t$-model $\mH_0'$ of $\ThetaZero$.
    Then, we run the algorithm of~\Cref{lem:detect-win} with $C,t$, and distinguish its two cases.

	Assume first that~\Cref{lem:detect-win} outputs a $C$-winning scenario $W$, with $|W| \leq  Ct$.
	If $G-N[W]$ is \Lpgt-free, we return $k=1$, a \Lpgt{} obtained from $\mH_0'$ via \cref{lem:xis-from-model}, and $X=W$, noting $|X| \leq \lambda t$ as desired. 
	Otherwise, let $D$ be the output connected component of $G-N[W]$ and $G' = G - N[D]$, such that both $D$ and $G'$ contain a \Lpgt.
	We recurse on $D$, which returns $k_D,X_D$ and a \Lpgt{} packing $\cXi_D$; and on $G'$, which returns $k',X'$ and $\cXi'$.
	Then, we return
	\[
    k \coloneqq k_D+k', \quad 
    \cXi \coloneqq \cXi' \cup \cXi_D, \quad 
    X \coloneqq W \cup X' \cup X_D.
    \]
	It is easy to see that $\cXi_D \cup \cXi'$ is an induced packing of \Lpgt in $G$ of size $k$, since $G'$ and $D$ are anti-adjacent, and $G-N[X]$ is indeed \Lpgt-free.
	By the definition of a winning scenario, we have $k',k_D \geq 1$, and we distinguish two cases.
	If $k_D \leq k/2$, then:
	\begin{align*}
		|X| &\leq Ct + \lambda t k_D \log(2k_D) + \lambda t k' \log(2k') \\
		&\leq \lambda t \left({C}/{\lambda} + k_D (\log(2k)-1) + k' \log(2k) \right) \tag*{$C \leq \lambda$ and $k_D \geq 1$} \\
		&\leq \lambda t k \log(2k) 
	\end{align*}
	\noindent
	The case is symmetrical when $k' \leq k/2$, yielding the same outcome.
	
	Otherwise, \Cref{lem:detect-win} certifies that $G,t$ does not admit a $C$-winning scenario.
	Then, running~\Cref{lem:first-ears} with $C,t$ outputs a short $t$-model $\mH_0 = (H_0,\eta_0,R_0)$.
	We know that $H_0$ is subcubic, with $|E(H_0)|\leq c_E$, $|V(H_0)| \leq c_E$, $|R_0| \leq c_R t$, and $H_0 -e$ contains a theta for any $e \in E(H_0)$.
	While possible, add a shortest ear between two distinct edges,
	yielding a $t$-ear-decomposition $\mH_0,...,\mH_\ell$, where all steps are proper.
	Take $\mH = (H,\eta,R) = \mH_\ell$, which is a proper-maximal short $t$-model of a subcubic graph, and note that $H -e$ still contains a $\Theta_0$ for any $e \in E(H)$.

	Let $k_\mH^\circ$ be the largest integer such that $n_3(H) \geq \lsimt k_\mH^\circ \log (2k_\mH^\circ) + 8 n_{\leq 2}(H) + 18 n_{||}(H)$,
	or $k_\mH^\circ = 1$ if this integer is nonpositive.
	We then know $H$ contains a packing of $k_\mH^\circ$ disjoint $\ThetaZero$ minors, either by~\Cref{lem:first-ears} or~\Cref{thm:thetas-in-cubic}.
	Now, \Cref{obs:ear-decomp-mostly-cubic} gives the following bounds: $n_{\leq 2}(H) \leq |V(H_0)| \leq c_E$; $n_{||}(H) \leq |E(H_0)| \leq c_E$ since all steps are proper; and $n_3(H) \geq 2 \ell$. Moreover $|R| \leq c_R t + (2t+12) \ell$.
	By the maximality of $k_\mH^\circ$ and the above, we have 
	\[ n_3(H) \leq \lsimt (k_\mH^\circ+1) \log (2k_\mH^\circ+2) + 8 n_{\leq 2}(H) + 18 n_{||}(H). \]
	This gives
	\[ \ell \leq \frac{\lsimt}{2} (k_\mH^\circ+1) \log (2k_\mH^\circ+2) + 13c_E \leq 2 \lsimt k_\mH^\circ \log (2k_\mH^\circ) + 13 c_E.\]
	Therefore, since $\lambda \geq 32 (c_R + 28 \lsimt + 182 c_E)$:
	\begin{align*}
		|R| &\leq c_R t + (2t+12) \ell \leq c_R t + 14 t \ell  \\
		&\leq \left( c_R + 14(2 \lsimt k_\mH^\circ \log(2k_\mH^\circ) + 13c_E) \right)t \\
		&\leq  (c_R + 28 \lsimt + 182 c_E)t k_\mH^\circ \log(2 k_\mH^\circ) \\
		&\leq \frac{1}{32} \lambda t k_\mH^\circ \log (2k_\mH^\circ)
	\end{align*}

	Consider now the connected components $D_1,...,D_p$ of $G-N[R]$ containing a \Lpgt.
	Let $E' \subseteq E(H)$ consist of those edges $e_i$ such that $N[\eta(e_i)] \cap D_i \neq \emptyset$ for some $i$, noting $|E'| \leq p$ by~\Cref{lem:max-model-no-inter-ear}.
	Consider any packing of $k_\mH^\circ$ disjoint $\Theta_0$ minors in $H$ as guaranteed previously.
	In the case where $p=1$, \Cref{lem:first-ears} enables us to always take (at least) one $\Theta_0$ that does not contain $e_1$.
	In all cases, we retain only those $\Theta_0$ that do not contain an edge of $E'$, and project them using~\Cref{lem:xis-from-model} to an induced packing of \Lpgt{}s in $G[V(\mH)]$.
	Then, we let the resulting packing be $\cXi_\mH$, noting it does not touch any $D_i$, $|\cXi_\mH| \geq k_\mH^\circ - p$, and when $p=1$ we have $|\cXi_\mH| \geq 1$.

	For every $i$, we recurse on $G_i = G[D_i]$, obtaining $k_i,\cXi_i,X_i$. 
	We then return:
	\[
    k \coloneqq |\cXi_\mH| + \sum_{i=1}^p |\cXi_i|, \quad 
    \cXi \coloneqq \cXi_\mH \cup \bigcup_{i=1}^p \cXi_i, \quad 
    X \coloneqq R \cup \bigcup_{i=1}^p X_i.
    \]
	It is clear that $\cXi$ is an induced \Lpgt{} packing of size $k$, and that $G-N[X]$ does not contain any \Lpgt, so it remains to bound $|X|$.
	We first deal with the case $p=1$, and for $p \geq 2$ we distinguish two cases according to $|\cXi_\mH|$, similarly to~\Cref{thm:EP-cycles-algo}.
	
	If $p=1$, then we have $k_\mH^\circ \leq |\cXi_\mH| + 1$ and $|\cXi_\mH| \geq 1$, giving:
	\begin{align*}
		|X| = |R| + |X_1| &\leq \frac{\lambda}{32} t k_\mH^\circ \log(2 k_\mH^\circ) + \lambda t k_1 \log(2k_1) \\
		&\leq  \lambda t \log(2k) \left((|\cXi_\mH|+1)/32 + (k-|\cXi_\mH|)\right) \\
		&\leq \lambda t k \log(2k) \tag*{since $|\cXi_\mH| \geq 1$.}
	\end{align*}
	We now proceed to the case $p \geq 2$.
	If $|\cXi_\mH| \geq k_\mH^\circ/2$, then we have $\sum_{i=1}^p k_i \leq k-k_\mH^\circ/2$, giving:
    \begin{align*}
    |X| = |R| + \sum_{i=1}^p |X_i| &\leq \frac{\lambda}{32} t k_\mH^\circ \log(2k_\mH^\circ) + \sum_{i=1}^p \lambda t k_i \log(2k_i)\\
    &\leq t \log(2k) \cdot \left(\frac{\lambda}{32} k_\mH^\circ + \lambda (k - k_\mH^\circ/2)\right) \\
	&\leq \lambda tk \log(2k).
    \end{align*}

	Otherwise, if $|\cXi_\mH| < k_\mH^\circ/2$, in particular $p > k_\mH^\circ/2$ as $|\cXi_\mH| \geq k_\mH^\circ - p$.
    Without loss of generality, assume $k_1 \geq k_2 \geq \ldots \geq k_p$, so $k_i \leq k/i$ for all $i \in [1,p]$, and $\log(2k_i) \leq \log(2k) - \log(i)$.
    \begin{align*}
    |X| &= |R| + \sum_{i=1}^p |X_i| \leq \frac{1}{32}\lambda tk_\mH^\circ \log(2k_\mH^\circ) + \sum_{i=1}^p \lambda t k_i \log(2k_i)\\
    &\leq \frac{1}{16}\lambda tp \log(4p) + \lambda t \sum_{i=1}^p k_i \left(\log(2k) - \log(i)\right)\\
    &\leq \frac{1}{8}\lambda tp \log(2p) + \lambda tk \log(2k) - \lambda t \sum_{i=2}^p k_i \log(i).
    \end{align*}
    As $p \geq 2$, we have
    \[ \sum_{i=2}^p k_i \log(i) \geq \sum_{i=\lceil \frac{p+1}{2} \rceil}^p \log\left(\left\lceil \frac{p+1}{2} \right\rceil\right) \geq \frac{p}{2} \cdot \frac{1}{4}\log(2p) = \frac{1}{8} p \log(2p).\]
    Thus, we have $|X| \leq \lambda tk \log(2k)$ in any case, which concludes the proof.
\end{proof}

	\section{Induced Erd\H{o}s--P\'osa property for long thetas induced minors}\label{sec:L3PC-cor}
	
In this section, we deduce \cref{thm:EP-thetas} from \cref{thm:EP-3LPC-algo}.
Recall that, unlike \cref{sec:cycles}, this result is only existential (not algorithmic).
We are going to work with a short $t$-model of the $\ThetaZero$ graph.

\EPTheta*

We split the whole proof into three logical parts.
First, we introduce some terminology and preliminaries (\cref{sec:big}); then we show how to construct a short $t$-model of $\ThetaZero$ (\cref{sec:initialTheta}); and we conclude with the proof of \cref{thm:EP-thetas} (\cref{sec:thetaProof}).

\subsection{Big component and measure}\label{sec:big}

We start by introducing our notation and terminology.
We would like to abstract the idea of a large component in a graph, while allowing what is large to be measured differently.
We introduce this abstraction as a general concept, but we only use it for a particular measure.
We first specify how we identify the big component. 
In the following, let $G$ be a graph.
We define a \emph{guidance} as an oracle that, for every induced subgraph $G'\subseteq G$, returns a single connected component (called \emph{the big component}) denoted by $\BIG(G')$, or concludes that such a component does not exist (in that case we write $\BIG(G')=\emptyset$).
We require the guidance to be monotone in the following sense:
\[ A\subseteq B\subseteq V(G) \Longrightarrow \BIG(G[A])\subseteq \BIG(G[B]).\]

Note that we will use the notation of $\BIG$ whenever we assume a guidance without explicitly mentioning the connection.
We say that $S \subseteq V(G)$ is a \emph{dominated guided separator} if $\BIG(G-N[S])=\emptyset$.
A simple guidance (which is also easy to compute algorithmically) for vertex-weighted graphs is one that, for every induced subgraph $G'\subseteq G$, returns the connected component of $G'$ of weight greater than half of the total weight of $G$ as $\BIG(G')$ (if such a component exists).
In that case, the dominated guided separator is exactly the dominated balanced separator.
In the proof of \cref{thm:EP-thetas}, we use a guidance that returns the connected component $C$ of $G'$ such that $\pack_{\Theta_t}(C)> \frac{1}{2}\pack_{\Theta_t}(G)$.

Consider a graph $G$ with a guidance.
In our arguments, we will make use of a certain measure of vertex subsets of $G$, indicating how many neighborhoods we need to delete to separate a given set from the big component of the remaining graph:
\begin{definition}[Guided measure]
Let $G$ be a graph with a guidance.
Given $X\subseteq V(G)$, we define the \emph{measure} ($\mu(X)$) as 
\[\mu(X)\coloneqq \min \left\{\left|M \right|~\Big|~M\subseteq V(G) \wedge \BIG(G-N[M])\cap X=\emptyset\right\}. \]
Then a respective \emph{separator} ($\sep(X)$) is defined as a set $M\subseteq V(G)$ such that $\BIG(G-N[M])\cap X=\emptyset$ and $|M|=\mu(X)$.
\end{definition}

We prove basic properties of the guided measure.
For a graph $G$, we use $\mu_{G}\coloneqq \mu(V(G))$.

\begin{observation}[Basic properties of the measure]\label{obs:measure}
  Let $X,Y\subseteq V(G)$ and let $z\not\in X$ be a vertex.
  Then:
  \begin{itemize}
    \item[1.]\phantomsection\label{obs:measure:p2} $\mu(X\cup Y)\le \mu(X)+\mu(Y)$.
    \item[2.]\phantomsection\label{obs:measure:p4} $\mu(N[X])\le |X|$.
    \item[3.]\phantomsection\label{obs:measure:p1} $\mu(X\cup N[z]) \ge \mu(X)\ge \mu(X\cup N[z])-1 $.
    \item[4.]\phantomsection\label{obs:measure:p3} $\mu_{G}= \max_{X\subseteq V(G)} \mu(X)$.
  \end{itemize}
\end{observation}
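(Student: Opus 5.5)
Each of the four items follows directly from the definition of $\mu$ as a minimum over sets $M$, combined with the monotonicity of the guidance. The plan is to prove them in the order 1, 2, 3, 4, since item 3 reuses the ideas of items 1 and 2. Monotonicity is used in one form only. If $M \subseteq M'$, then $G-N[M'] \subseteq G-N[M]$, and hence $\BIG(G-N[M']) \subseteq \BIG(G-N[M])$. In words, enlarging the separator can only shrink the big component.

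For item 1, take $M_X = \sep(X)$ and $M_Y = \sep(Y)$, and set $M = M_X \cup M_Y$. By monotonicity, $\BIG(G-N[M])$ is contained in both $\BIG(G-N[M_X])$ and $\BIG(G-N[M_Y])$. The first is disjoint from $X$ and the second is disjoint from $Y$, so $\BIG(G-N[M])$ avoids $X\cup Y$. Since $|M| \le \mu(X)+\mu(Y)$, item 1 follows. For item 2, take $M = X$. Then $G-N[X]$ contains no vertex of $N[X]$, so its big component, which is a subset of $V(G-N[X])$, is disjoint from $N[X]$. This gives $\mu(N[X]) \le |X|$.

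For item 3, the left inequality holds because any $M$ that separates $X \cup N[z]$ also separates $X$; the feasible family for $X$ is a superset, so its minimum is no larger. For the right inequality, take $M = \sep(X) \cup \{z\}$. The component $\BIG(G-N[M])$ lies in $\BIG(G-N[\sep(X)])$, so it avoids $X$. It also lies in $G-N[z]$, so it avoids $N[z]$. Hence $\mu(X\cup N[z]) \le \mu(X)+1$. The hypothesis $z \notin X$ is not needed for this argument.

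For item 4, apply the left inequality of item 3 in its general form: $\mu$ is monotone under inclusion, because a separator for a larger set also separates any subset. Therefore the maximum of $\mu(X)$ over all $X \subseteq V(G)$ is attained at $X = V(G)$, which gives $\mu_G$. None of these steps is a real obstacle. The only point that needs care is applying monotonicity of $\BIG$ in the correct direction, namely that a larger $M$ yields a smaller induced subgraph and therefore a smaller big component.
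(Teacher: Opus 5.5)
Your proof is correct and follows the paper's argument. Item 1 uses $\sep(X)\cup\sep(Y)$ with monotonicity of the guidance, item 2 takes $M = X$, and item 3 uses inclusion of feasible sets for the left inequality and $\sep(X)\cup\{z\}$ for the right one. The paper phrases that last step as item 1 with $Y = N[z]$, implicitly combined with item 2. Item 4, which the paper calls trivial, is exactly the monotonicity of $\mu$ under inclusion that you spell out.
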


\begin{proof}
For \hyperref[obs:measure:p2]{Point~1}, observe that $\sep(X)\cup\sep(Y)$ separates $X\cup Y$ from the big component, using the monotonicity of the guidance.
For \hyperref[obs:measure:p4]{Point~2}, we observe that $N[X]$ separates $N[X]$ from the big component.
For \hyperref[obs:measure:p1]{Point~3}, the part claiming $\mu(X\cup N[z]) \ge \mu(X)$ is trivial.
The other part follows by~\hyperref[obs:measure:p2]{Point~1} with $X\coloneqq X$ and $Y\coloneqq N[z]$.
  \hyperref[obs:measure:p3]{Point~4} is a  trivial observation.
\end{proof}

We now define guided analog of the Gyárfás path notion~\cite{Gyarfas}.
\begin{definition}[Guided Gyárfás Path]\label{def:gp}
Let $G$ be a graph with a guidance.
An induced path $P$ of $G$ with a designated start and end vertex
is a \emph{guided Gyárfás path} if $\BIG(G-N[P]) = \emptyset$, but for all proper prefixes $P'$ of $P$, we have $\BIG(G-N[P']) \neq \emptyset$.
\end{definition}

\begin{lemma}\label{lem:Gyarfas}
  Let $G$ be a graph with a guidance. 
There is a guided Gyárfás Path $P$ in $G$.
Moreover, if $\mu(N[P])\le m$ then $G$ has a dominated guided separator of size at most $m$.
\end{lemma}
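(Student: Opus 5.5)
The plan has two parts: build the path by a greedy Gyárfás-type construction, then use the minimality of $\sep(N[P])$ to turn a small measure into a dominated guided separator.

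\textbf{Existence of the path.} For a sequence $P_i=p_1,\ldots,p_i$ write $B_i\coloneqq\BIG(G-N[P_i])$, with $B_0\coloneqq\BIG(G)$. If $B_0=\emptyset$, I take the empty path, for which the condition on proper prefixes is vacuous. Otherwise I pick an arbitrary $p_1\in B_0$ and extend greedily, maintaining three invariants.
\begin{itemize}
\item $P_i$ is an induced path.
\item $p_i\in B_{i-1}\cup N(B_{i-1})$, and $p_i$ has a neighbour in $B_{i-1}$ when $i\geq 2$.
\item $p_i\notin N[P_{i-2}]$.
\end{itemize}

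First, $B_i\subseteq B_{i-1}$. Indeed, $B_i$ is a connected subset of $G-N[P_{i-1}]$, and monotonicity gives $B_i\subseteq\BIG(G-N[P_{i-1}])=B_{i-1}$.

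Next, as long as $B_i\neq\emptyset$, I find the next vertex. The set $B_{i-1}\cup\{p_i\}$ is connected and strictly contains $B_i$, since $p_i\notin B_i$. Hence some vertex $w\in B_{i-1}\cup\{p_i\}$ lies outside $B_i$ but is adjacent to $B_i$. This $w$ is not $p_i$, because $B_i$ avoids $N[p_i]$. So $w\in B_{i-1}$, and in particular $w\notin N[P_{i-1}]$. On the other hand $w$ is not in the component $B_i$ of $G-N[P_i]$ although it is adjacent to it, so $w\in N[P_i]$. Combining the two, $w\in N(p_i)$. I set $p_{i+1}\coloneqq w$.

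This choice keeps all invariants. The vertex $p_{i+1}$ is adjacent to $p_i$ and anti-adjacent to $P_{i-1}$, so $P_{i+1}$ is an induced path, and $p_{i+1}$ has a neighbour in $B_i$. For $i=1$ the same argument runs with $B_0$ in place of $B_{i-1}\cup\{p_i\}$, since $B_0$ is connected and contains $p_1$.

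Finally, the process terminates. We have $p_{i+1}\in B_{i-1}\setminus B_i$, so $|B_i|$ strictly decreases. The first index $i$ with $B_i=\emptyset$ yields a guided Gyárfás path, because every proper prefix of it has a nonempty big component by construction.

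\textbf{The separator.} Let $M\coloneqq\sep(N[P])$, so $|M|=\mu(N[P])\leq m$ and $B\coloneqq\BIG(G-N[M])$ is disjoint from $N[P]$. I want to show $B=\emptyset$, so that $M$ itself is the required dominated guided separator. If $B\neq\emptyset$, then $B$ is a connected component of $G-N[M]$ lying inside $G-N[M]-N[P]$, hence also a component of that smaller graph. I would then argue that $B=\BIG(G-N[M]-N[P])$. Monotonicity then gives $B\subseteq\BIG(G-N[P])=\emptyset$, a contradiction.

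\textbf{Main obstacle.} The delicate step is the identity $\BIG(G-N[M]-N[P])=B$. Abstract monotonicity only gives the inclusion in the wrong direction. For the concrete guidances used in the paper the identity is immediate: a component of weight (or of $\Theta_t$-packing number) more than half the total stays big when it survives intact in a smaller induced subgraph. So I would either verify this stability property directly for the guidances at hand, or add it to the notion of guidance: if $\BIG(G[Y])\subseteq X\subseteq Y$ then $\BIG(G[X])=\BIG(G[Y])$.
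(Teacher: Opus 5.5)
\paragraph{Existence of the path.} Your greedy construction is correct and is essentially the paper's Gyárfás-style argument. The paper takes $p_{i+1}$ to be the last vertex of $N(p_i)$ on a path from $p_i$ to $D_i$ inside $D_{i-1}\cup\{p_i\}$; your boundary-vertex choice amounts to the same thing, and your termination argument is fine.

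\paragraph{The separator claim.} The obstacle you flag is real. It is exactly the step the paper leaves unjustified: its proof only asserts that $\BIG(G-N[\sep(N[P])])$, if nonempty, ``has to intersect $N[P]$''. Under the paper's definition of a guidance (monotonicity only), this is false, already for a path your construction can output.

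Take the cycle $c_0c_1\cdots c_{11}$, add a vertex $x$ adjacent to $c_2$ and $c_{10}$, and add a pendant vertex $z$ adjacent to $c_0$. Let $\BIG(G[A])$ be the component of $G[A]$ with more than $6$ cycle vertices if $z\in A$, or with more than $8$ cycle vertices if $z\notin A$, and $\emptyset$ if no such component exists. Such a component is unique, since it holds more than half of the $12$ cycle vertices. The rule is monotone: when $A$ grows, components only grow and the threshold only drops.
\begin{itemize}
\item $P=c_0c_1$ is a guided Gyárfás path that your construction may produce. $G-N[c_0]$ has the component $\{c_2,\ldots,c_{10},x\}$ with $9>8$ cycle vertices, so it is big. $G-N[P]$ is connected, has only $8$ cycle vertices and lacks $z$, so its big component is empty.
\item $\mu(N[P])=1$, witnessed by $M=\{x\}$. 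The big component of $G-N[x]$ is $\{c_3,\ldots,c_9\}$, with $7>6$ cycle vertices and $z$ present. It avoids $N[P]=\{c_{11},c_0,c_1,c_2,z\}$.
\item No single vertex is a dominated guided separator. For $y\neq x$, $G-N[y]$ contains an arc of at least $9$ consecutive cycle vertices, which is big regardless of $z$. For $y=x$ the big component is the one above.
\end{itemize}
So the conclusion fails with $m=1$, and no argument using monotonicity alone can close your gap.

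\paragraph{The fix.} Your proposed stability axiom (if $\BIG(G[Y])\subseteq X\subseteq Y$ then $\BIG(G[X])=\BIG(G[Y])$) is the right extra hypothesis. With it your argument is complete: set $X=V(G)\setminus(N[M]\cup N[P])$. Then $B\subseteq X\subseteq V(G)\setminus N[M]$ gives $B=\BIG(G[X])$, and monotonicity gives $B\subseteq\BIG(G-N[P])=\emptyset$.

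Both guidances the paper actually uses decide bigness from the component alone, so both satisfy the axiom. These are weight more than $\wei(V(G))/2$ (for the balanced separators) and $\pack_{\Theta_t}$ more than $\frac12\pack_{\Theta_t}(G)$ (in the proof of the induced \EP theorem for thetas). The lemma should be stated with this hypothesis; its later applications are unaffected.
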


The proof of \cref{lem:Gyarfas} mimics the original proof of Gyárfás~\cite[Theorem~2.4]{Gyarfas}; the same argument gives the desired outcome in our setting.

\begin{proof}[Proof of \cref{lem:Gyarfas}]
  We first show that there is an induced path $P$ satisfying \cref{def:gp}.
In case $\BIG(G)=\emptyset$ then we take $P=\emptyset$.
Otherwise take $P_1$ as an arbitrary single vertex of $\BIG(G)$.
If $\BIG(G-N[P_1]) = \emptyset$ we set $P\coloneqq P_1$.
Otherwise, for $i\ge1$, suppose that we constructed a path $P_i=p_1\ldots p_i$ and that $D_i\coloneqq\BIG(G-N[P_i])\neq\emptyset$.
We keep an invariant that $p_i$ was chosen to have a neighbor in $D_{i-1}$ (where $D_0\coloneqq \BIG(G-p_1)$).
Since $p_i$ was chosen with a neighbor in $D_{i-1}$, there is a path in $G[D_{i-1}\cup\{p_i\}]$ from $p_i$ to $D_i$ due to the monotonicity of guidance ($D_i\subseteq D_{i-1}$).
  We set $p_{i+1}$ to be the last vertex of this path that belongs to $N(p_i)$.
  Then $p_{i+1}$ has a neighbor in $D_i$ and it is anti-adjacent to $P_{i-1}$.
  Hence $P_{i+1}\coloneqq P_i+p_{i+1}$ is again an induced path.
  Since $G$ is finite, the process stops and outputs a guided Gyárfás path $P$.

  For the second point, we assume that $\mu(N[P])\le m$.
Let $S\coloneqq\sep(N[P])$.
  If $R\coloneqq\BIG(G-N[S])\neq\emptyset$ then $R$ has to intersect $N[P]$.
However, that contradicts the definition of $\mu(N[P])$.
\end{proof}

\subsection{Finding a short $t$-model of $\ThetaZero$}\label{sec:initialTheta}

\begin{lemma}[Finding a short $t$-model of $\ThetaZero$]\label{lem:initial_theta}
  Let $G$ be a connected graph with a guidance, and let $t\ge 1$ be an integer.
  Then one of the following holds:
  \begin{itemize}
    \item $G$ admits an $\Oh(t)$-dominated guided separator, or
    \item $G$ contains a short $t$-model $(\ThetaZero,\eta,R)$ 
      such that $|R|=\Oh(t)$.
\end{itemize}
\end{lemma}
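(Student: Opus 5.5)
Our plan is to follow the guided Gyárfás path approach. By \cref{lem:Gyarfas}, fix a guided Gyárfás path $P = p_1 p_2 \ldots p_m$ in $G$. If $m$ is small, say $m \leq c t$ for a suitable constant $c$, then $\mu(N[P]) \leq |P| = \Oh(t)$ by Point~2 of \cref{obs:measure}. The second part of \cref{lem:Gyarfas} then gives an $\Oh(t)$-dominated guided separator, which is the first outcome. So we may assume that $P$ is long, i.e., $m > ct$.

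We use the fact that every proper prefix $P'$ of $P$ leaves a nonempty big component $D' = \BIG(G-N[P'])$, and that $p_{|P'|+1}$ has a neighbour in it. Take the prefix $P_a$ with $a \approx 2t+O(1)$ and its big component $D_a$. Also take the short middle segment $Q = p_{a-t-1} \ldots p_{a}$ and consider the measure of $N[Q]$ together with the sets $N[p_1\ldots p_{a-t-2}]$ and $N[p_{a+1}\ldots]$. The idea is to build a promising partial $t$-long $3$-path configuration (\cref{def:partial_tL3PC}) whose first path is a segment of $P$ of length $t+1$. The other two paths would leave the last vertex of $P_a$ (or a triangle around it) into $D_a$ and into the part of the graph reached by the tail of $P$.

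Concretely, we proceed as follows. Split $P$ into three consecutive blocks of length about $t+3$, namely $A$, $B$, $C$, followed by the remainder. Then consider $D = \BIG(G - N[A\cup B\cup C])$, which is nonempty by the choice of the Gyárfás path, since $A\cup B\cup C$ is a proper prefix when $m > ct$. We try to find a vertex $z \in D$ and, using \cref{lem:make-tripod} inside $G[D \cup \{\text{last vertex of } C\}]$, a tripod together with the path $A B C$ that forms a $\Xi_{\geq t}$, or directly a promising configuration. Once we have a promising configuration, \cref{lem:extend_tripod_to_theta} produces the desired short $t$-model of $\ThetaZero$ with $|R| \leq 6t+18$.

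If instead no such configuration exists, then every path from the big component back to $P$ must pass close to a bounded window of $P$. In that case we want to show that the $\Oh(t)$ vertices of that window, together with a constant number of further vertices, already separate everything from the big component. This would give the separator outcome via the measure bound and Point~3 of \cref{obs:measure}.

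The main obstacle is this dichotomy step. We must argue that if the component $D$ ahead of the path only ``returns'' to $P$ near a single short window, then deleting the neighbourhood of that window kills the big component. The plan is to mimic the slider argument of \cref{lem:split-one-component-from-model}: slide a window of length $t+3$ along $P$, use monotonicity of the guidance so that each window's deletion leaves a unique big component touching either the earlier or the later side of $P$, and find consecutive windows of opposite type. Their union, of size $\Oh(t)$, is then a dominated guided separator. If at some window both sides are touched by the big component, then a minimal connecting path through the big component, combined with the two sides of $P$ and the window, yields three long anti-adjacent routes between two branch points. That is a promising configuration, hence the short model.
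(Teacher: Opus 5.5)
Your proposal has a genuine gap at the dichotomy step, which is where all the work lies. The slider argument of \cref{lem:split-one-component-from-model} works only because maximality of the model forbids a component from touching both sides of a window, since such a component would yield an ear. Here nothing excludes this. Your fallback, that if $\BIG(G-N[M_i])$ touches both the earlier and the later part of $P$ then one gets three long anti-adjacent routes, is false. A path through the big component from the earlier side to the later side avoiding $N[M_i]$ merely closes one cycle through $M_i$; there is no third route.

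For instance, take $G=C_N$ with unit weights and the half-weight guidance. The guided Gyárfás path runs along about half of the cycle, and the big component of $G-N[M_i]$ touches both sides for every interior window, yet $G$ contains no theta at all. Likewise, finding consecutive windows of ``opposite types'' would only show, as in \cref{lem:split-one-component-from-model}, that the big component of $G-N[M_i\cup M_{i+1}]$ avoids $N[P]$. It would not show that this component is empty, so it does not yield a dominated guided separator. Your earlier concrete attempt with blocks $A,B,C$ never specifies the third leg, nor why the three ends stay connected outside the shell. Finally, thresholding on $|P|$ rather than on $\mu(N[P])$ discards exactly the information you need: a long Gyárfás path can still be cut off from the big component by $\Oh(1)$ balls, as the cycle example shows.

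The missing idea is to work quantitatively with the guided measure $\mu$ instead of with lengths. The paper proceeds as follows.
\begin{itemize}
\item It thresholds on $\mu(N[P])$. It then takes the shortest prefix $P'$ with $\mu(N[P'])=8t+3$, and the shortest end segments $Q_1,Q_2$ of $P'$ with $\mu(N[Q_1])=\mu(N[Q_2])=4t+1$.
\item Subadditivity of $\mu$ then yields a vertex $v\in N[P']$, not touching $Q_1\cup Q_2$, that is attached to $\BIG(G-N[P'])$.
\item The tripod is centred at $v$, or at the vertex or triangle $v'$ determined by $N(v)\cap P'$. Two legs of $t$ vertices run along $P'$, and a third leg is grown Gyárfás-style into $\BIG(G-N[P'])$. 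If that growth fails, a minimum separator of $N[P']$ together with the third leg is a $(9t+4)$-dominated guided separator.
\end{itemize}
Promisingness then comes for free. The tripod has fewer vertices than $\mu(N[Q_j])=4t+1$, so its closed neighbourhood cannot separate $Q_1$ or $Q_2$ from the big component. Hence $q_1$ and $q_2$ reach the big component along $P'$, and $q_3$ reaches it through $x_3d_3$.

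This ``a few balls cannot cut off a set of large measure'' argument is what replaces the ear/maximality argument you were trying to import. Without it your dichotomy is unproven, and as stated it is false.
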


\newcommand{\cA}{4t+1}
\newcommand{\cC}{9t+4}
\newcommand{\cB}{8t+3}
\newcommand{\cD}{3t+1}

\begin{proof}
We take a guided Gyárfás path $P$ given by~\cref{lem:Gyarfas}.
If $\mu(N[P])\leq \cB$, we are done, as there is a $(\le\cB)$-dominated guided separator by \cref{lem:Gyarfas}. 
We assume $\mu_G\ge \mu(N[P])>\cB$.

We take the shortest prefix $P'$ of $P$ such that $\mu(N[P'])=\cB$.
Note that by \cref{obs:measure}, \hyperref[obs:measure:p1]{Point~3}, we know that such a prefix exists
and $P' \neq P$ thus $\BIG(G-N[P'])\neq \emptyset$ by \cref{def:gp}.

Let $Q_1$ and $Q_2$ be the shortest prefix and the shortest suffix of $P'$, respectively, such that \[\mu(N[Q_1])=\mu(N[Q_2])=\cA.\]
By \cref{obs:measure}, \hyperref[obs:measure:p1]{Point~3}, both $Q_1,Q_2$ exist.
Moreover, they are disjoint: otherwise $Q_1\cup Q_2=P'$, and by \cref{obs:measure}, \hyperref[obs:measure:p2]{Point~1}, we would get $\mu(N[P'])\le \mu(N[Q_1])+\mu(N[Q_2])=2 \cdot (\cA)<\cB$, a contradiction.
In particular, as $\mu(N[P'])=\cB$, there is a vertex
\[r \in L\coloneqq N[P'] \setminus N[V(Q_1)\cup V(Q_2)]\setminus P'\] 
that belongs to the big component of $G-N[\sep(N[Q_1])\cup\sep(N[Q_2])]$. 
Indeed, $|\sep(N[Q_1])|=|\sep(N[Q_2])|=\cA<\frac{\cB}{2}$, so there are also vertices of $N[P']-N[V(Q_1)\cup V(Q_2)]$ in the big component
of $G-N[\sep(N[Q_1]) \cup \sep(N[Q_2])]$.
Consider now a shortest path from $r$ to $\BIG(G-N[P'])$ (which is nonempty) in the graph $G- N[(V(Q_1)\cup V(Q_2))]$.
Let $v$ be the last vertex of this path that belongs to $L$. 
It follows that $v$ has a neighbor $u$ in $\BIG(G-N[P'])$.

We view the path $P'$ as starting on the left and ending on the right.
Now, we consider $N(v)\cap P'$ and handle three cases, after which we define vertices $v_\ell,v_r$ and an object $v'$ which will either be a vertex or a triangle.
\begin{enumerate}
  \item\label{case:first}
    If $|N(v)\cap P'|=1$, then we set $v'$ to be the only vertex in $N(v)\cap P'$.
    Then we set $v_\ell$ (resp. $v_r$) to be its left (resp. right) neighbor on $P'$.
  \item\label{case:second}
    If $N(v)\cap P'$ is exactly the edge $ab$ of $P'$.
    Then we set $v'$ as the triangle $abv$. 
    Then we set $v_\ell$ (resp. $v_r$) to be the left neighbor of $a$ (resp. the right neighbor of $b$) on $P'$.
  \item\label{case:third}
    Otherwise, we let $v'\coloneqq v$ and $v_\ell$ (or $v_r$) be the leftmost (or the rightmost) neighbor of $v$ on $P'$, respectively.
    Note that, as $P$ is a guided Gyárfás path, $P$ is an induced path, and we are not in Case~\ref{case:second}, $v_\ell$ and $v_r$ are nonadjacent.
\end{enumerate}
Let $Q'_1$ be $t$ consecutive vertices of $P'$ starting with $v_\ell$ and continuing towards the start of $P'$.
Such vertices must exist as $\cA=\mu(N[Q_1])\le |Q_1|$ (by \cref{obs:measure:p4}~\hyperref[obs:measure:p4]{Point~2}) while $v'\not\in V(Q_1)$.
We perform the analogous construction with $Q'_2$.
Let $Q'_2$ be $t$ consecutive vertices of $P'$ starting with $v_r$ and continuing towards the end of $P'$.
Such vertices must exist as $\cA=\mu(N[Q_2])\le |Q_2|$ (by \cref{obs:measure:p4}~\hyperref[obs:measure:p4]{Point~2}) while $v'\not\in V(Q_2)$.

\begin{figure}[t]
\centering
\begin{tikzpicture}[scale=1.0, every node/.style={font=\small}]
  \coordinate (L) at (-5,0);
  \coordinate (R) at (5,0);
  \draw[thick] (L) -- (R);

  \coordinate (vl) at (-1.3,0);
  \coordinate (vm) at (0,0);
  \coordinate (vr) at (1.3,0);

  \fill (vl) circle (2pt) node[above=4pt] {$v_\ell$};
  \fill (vm) circle (2pt) node[above=4pt] {$v'$};
  \fill (vr) circle (2pt) node[above=4pt] {$v_r$};

  \draw[line width=1.4pt] (-4.2,0) -- (vl);
  \draw[line width=1.4pt] (vr) -- (4.2,0);

  \node[below=5pt] at (-2.8,0) {$Q'_1$};
  \node[below=5pt] at (2.8,0) {$Q'_2$};

  \fill (-4.2,0) circle (2pt) node[above=4pt] {$q_1$};
  \fill (4.2,0) circle (2pt) node[above=4pt] {$q_2$};

  \coordinate (newq3) at (0,-2.4);
  \fill (newq3) circle (2pt) node[right=4pt] {$q_3$};
  \coordinate (x3) at (0,-2.7);
  \fill (x3) circle (2pt) node[right=4pt] {$x_3$};
  \coordinate (q3) at (0,-3.2);
  \draw[line width=1.4pt] (vm) -- (q3);
  \fill (q3) circle (2pt) node[right=4pt] {$d_3$};
  \node[right=5pt] at (0,-1.5) {$Q'_3$};

  \node[above=12pt] at (0,0) {$P'$};

  \draw[dashed, rounded corners=8pt]
    (-2.4,-4.7) .. controls (-1.2,-5.3) and (1.2,-5.3) .. (2.4,-4.7)
    .. controls (3.0,-4.1) and (3.0,-3.6) .. (2.2,-3.2)
    .. controls (1.2,-2.8) and (-1.2,-2.8) .. (-2.2,-3.2)
    .. controls (-3.0,-3.6) and (-3.0,-4.1) .. (-2.4,-4.7);

  \node at (0,-4.0) {$D_3$};

  \draw[dotted] (-4.2,0) .. controls (-3.7,-1.2) and (-2.7,-2.2) .. (-1.2,-3.0);
  \draw[dotted] (4.2,0) .. controls (3.7,-1.2) and (2.7,-2.2) .. (1.2,-3.0);
  \draw[dotted] (q3) -- (0,-3.1);

  \node[left] at (-0.2,-1.5) {$C$};

  \node[below] at (-5,0) {start of $P'$};
  \node[below] at (5,0) {end of $P'$};
\end{tikzpicture}
\caption{The situation after \Cref{cl:q3}. The path $P'$ contains two long subpaths
$Q'_1,Q'_2$ starting at $v_\ell,v_r$, while $Q'_3$ starts at $v'$ and avoids
$N[P']\setminus\{v',v\}$. 
Let $q_3$ be the terminal vertex of $Q'_3$
The big component of $D_3\coloneqq \BIG(G-(N[P']\cup N[Q'_3]))\neq \emptyset$ contains a vertex $d_3$ having a neighbour $x_3\in N(q_3)\setminus \left(N[P']\cup N[Q'_3-\{q_3\}]\right)$.
Intuitively, path $d_3x_3q_3$ guarantees sort of an exclusive connection between $D_3$ and $q_3$.
Later in the proof, we show that $D_3$ has similar exclusive connection to a predecessor of $q_1$ on $P'$ and a successor of $q_2$ on $P'$.}
\end{figure}

\begin{claim}\label{cl:q3}
    Suppose $G$ does not admit an $(9t+4)$-dominated guided separator.
Then  there is a path $Q'_3$ of length $t$ starting at $v'$ and avoiding $N[P']\setminus\{v',v\}$.  
  Moreover, if $q_3$ is the terminal vertex of $Q'_3$ and
  \[
    D_3\coloneqq \BIG(G-(N[P']\cup N[Q'_3])),
  \]
  then $D_3\neq \emptyset$ and there are adjacent vertices $x_3,d_3$ such that
     $x_3\in N(q_3)\setminus \left(N[P']\cup N[Q'_3-\{q_3\}]\right)$
    and $d_3\in D_3$.
\end{claim}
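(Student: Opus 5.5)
The plan is to build $Q'_3$ by a guided Gyárfás-type walk, as in the proof of \cref{lem:Gyarfas}, that starts at $v'$, passes through $v$ and its neighbour $u\in\BIG(G-N[P'])$, and then keeps descending into the current big component. The budget $9t+4$ is chosen to absorb the separator $\sep(N[P'])$ of size $\cB$ plus the at most $t+1$ vertices of the walk.

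\textbf{Step 1: the first vertices.} By the choice of $v$, we have $v\in L$. Hence $v\notin P'$ and $v\notin N[V(Q_1)\cup V(Q_2)]$, and $v$ is adjacent to $u\in D_0\coloneqq\BIG(G-N[P'])\neq\emptyset$.
\begin{itemize}
\item In Case~\ref{case:third}, $v'=v$ and the walk starts $v,u$.
\item In Cases~\ref{case:first} and~\ref{case:second}, the walk starts $v',v,u$. In Case~\ref{case:second} we read $v'$ as the triangle, so the path starts at $v$, which is attached to the edge $ab$.
\end{itemize}
Since $u\in D_0$, $u\notin N[P']$. Also, $v$ is the only vertex of the walk that is allowed to meet $N[P']\setminus\{v'\}$. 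So the prefix avoids $N[P']\setminus\{v',v\}$, as required, and it is induced.

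\textbf{Step 2: extending the walk.} Suppose the current walk $W=w_0w_1\dots w_i$ ends at $w_i$, and $w_i$ has a neighbour in $D_{i-1}\coloneqq\BIG(G-(N[P']\cup N[W-w_i]))$. Let
\[ D_i\coloneqq\BIG(G-(N[P']\cup N[W])). \]
If $D_i\neq\emptyset$, we take a path inside $G[D_{i-1}\cup\{w_i\}]$ from $w_i$ to $D_i$; this uses monotonicity, $D_i\subseteq D_{i-1}$. We let $w_{i+1}$ be the last vertex of this path lying in $N(w_i)$. Exactly as in \cref{lem:Gyarfas}, $W+w_{i+1}$ is an induced path. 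It avoids $N[P']$ because $D_{i-1}\cap N[P']=\emptyset$, and it avoids $N[W-w_i]$ as well. We continue until the walk has length $t$. Its last vertex is $q_3$, and $D_3$ is the corresponding big component.

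Since $q_3$ was chosen with a neighbour in the previous big component, one more identical step produces $x_3$. This $x_3$ is the last vertex of $N(q_3)$ on a path from $q_3$ to $D_3$, so $x_3$ has a neighbour $d_3\in D_3$. Moreover $x_3\in N(q_3)\setminus(N[P']\cup N[Q'_3-\{q_3\}])$, because $x_3$ lies in the previous big component. This is precisely the claimed pair $x_3,d_3$.

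\textbf{Step 3: the main obstacle, showing the walk never gets stuck.} We must show that $D_i\neq\emptyset$ for every walk of length at most $t$, and that $D_3\neq\emptyset$. Suppose instead that $\BIG(G-(N[P']\cup N[W]))=\emptyset$ for some such walk $W$, which has at most $t+1$ vertices. Put
\[ M\coloneqq\sep(N[P'])\cup V(W), \qquad |M|\le \cB+(t+1)=\cC. \]
Then $\BIG(G-N[M])$ does not meet $N[P']$, by the definition of $\sep$, and does not meet $N[W]$. So it is a component of $G-N[M]$ lying entirely inside $V(G)\setminus(N[P']\cup N[W])$.

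The delicate point is to conclude that $\BIG(G-N[M])$ is then contained in $\BIG(G-(N[P']\cup N[W]))=\emptyset$. Monotonicity alone only yields inclusions in one direction. I would handle this via the property, valid for the guidances actually used (weight or packing majority), that a big component of a larger graph which survives intact as a component of an induced subgraph remains the big component there. With that, $\BIG(G-N[M])=\emptyset$, so $M$ is a $(\cC)$-dominated guided separator, contradicting the hypothesis of the claim.

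The same argument, with $W=Q'_3$ of exactly $t+1$ vertices, gives $D_3\neq\emptyset$. The counting of exactly $t+1$ walk vertices, so that the total stays within $\cC$, is where the case distinction of $v'$ must be tracked carefully.
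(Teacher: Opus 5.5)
Your route is the paper's own. You build a Gyárfás-type extension inside $G-N[P']$. You then argue that otherwise $\sep(N[P'])\cup V(Q'_3)$, of size at most $(8t+3)+(t+1)=9t+4$, would be a dominated guided separator. Finally you extract $x_3,d_3$ from a path that starts at $q_3$ and runs through the previous big component. One step fails as written, though it is easy to repair.

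\textbf{The gap: fixing $u$ as the vertex after $v$.}
\begin{itemize}
\item Step~2 needs, at every stage, that the last vertex $w_i$ has a neighbour in $\BIG(G-(N[P']\cup N[W-w_i]))$. Step~3 only rules out an empty big component.
\item You never check this invariant for $w_i=u$, and it need not hold. The relevant component is $\BIG(G-(N[P']\cup N[v]))$, using $N[v']\subseteq N[P']$ in Case~1.
\item Although $u\in D_0$, every neighbour of $u$ in $D_0$ may lie in $N(v)$. Then no vertex can be appended after $u$ while keeping the path induced and outside $N[P']$: such a vertex would lie in $D_0\cap N(u)\subseteq N(v)$.
\item In that situation $\BIG(G-(N[P']\cup N[\{v,u\}]))$ can still be nonempty, for instance reachable through another neighbour of $v$. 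So the walk gets stuck without producing a small separator, and Step~3 yields no contradiction.
\end{itemize}
The repair is not to commit to $u$. Start Step~2 already at $w_i=v$, with $W=v$, or $W=v'v$ in Case~1. The invariant holds there, because $\BIG(G-(N[P']\cup N[W-v]))=\BIG(G-N[P'])$ contains the neighbour $u$ of $v$. The rule then chooses the vertex after $v$ as the last vertex of $N(v)$ on a path from $v$ through $D_0$ to the new big component; this vertex need not be $u$. Nothing later in the argument uses $u$. The paper's text also says ``take $u$'' and has the same imprecision.

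\textbf{Your ``delicate point'' is genuine.} The paper simply asserts that $\sep(N[P'])\cup Q'_3$ is a dominated guided separator. Monotonicity alone does not give this, and the same implicit step occurs in the second part of \cref{lem:Gyarfas}. Your stability property completes the argument once it is written out:
\begin{itemize}
\item Let $M=\sep(N[P'])\cup V(W)$ and $A=V(G)\setminus(N[M]\cup N[P'])$.
\item The set $\BIG(G-N[M])$ avoids $N[P']$, so it is a component of $G[A]$. Stability then gives $\BIG(G[A])=\BIG(G-N[M])$.
\item Since $A\subseteq V(G)\setminus(N[P']\cup N[W])$, monotonicity gives $\BIG(G[A])\subseteq\BIG(G-(N[P']\cup N[W]))=\emptyset$.
\end{itemize}
Stability holds for the weight and packing guidances. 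These are the only guidances to which \cref{lem:initial_theta} is applied.

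Your statement that $q_3$ \emph{has a neighbour in} the previous big component is the accurate one. The paper's ``$q_3\in\widehat D$'' cannot hold, since $q_3$ is adjacent to the last vertex of $\widehat Q_3$, although this slip is harmless.
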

\begin{proof}[{Proof of \Cref{cl:q3}}]
  We proceed with the construction by adding single vertices using the Gyárfás path construction principle starting with vertex $v'$ as follows.
  In the first step, we take $u$, the neighbor of $v'$ in the big component of $G-N[P']$, except for Case~\ref{case:first} when $v\in N(v')$ and only $v$ has the neighbor $u$ with the properties above.
In that case, we take both $v$ and $u$ as the first two steps.

  In each subsequent step, if there is a big component in $G-N[P']-N[Q'_3]$, we add a vertex adjacent to the last vertex of the partial path $Q'_3$ constructed so far and remove all other neighbors of the considered last vertex.
We finish the above procedure when $Q'_3$ has length exactly $t$.
Suppose for a contradiction that we were not able to add a new vertex at some point of the construction above. 
This means the big component did not exist.
However, we removed a neighborhood of at most $t+1$ vertices in the construction of $Q'_3$ plus another at most $\cB$ vertices to separate $N[P']$ from the big component.
That means that $\sep(N[P'])\cup Q'_3$ is an $(\cC)$-dominated guided separator, a contradiction.
Moreover, the above argument also applies if $\BIG(G-(N[P']\cup N[Q'_3]))= \emptyset$ at the end of the selection of $Q'_3$.
Thus, we may assume that
\[
  D_3\coloneqq \BIG(G-(N[P']\cup N[Q'_3]))\neq \emptyset.
\]
Let $q_3$ be the terminal vertex of $Q'_3$, and let $\widehat Q_3\coloneqq Q'_3-q_3$.
Just before adding $q_3$, the current big component was
\[
  \widehat D\coloneqq \BIG(G-(N[P']\cup N[\widehat Q_3])).
\]
By construction, $q_3\in \widehat D$.
Moreover, by monotonicity, $D_3\subseteq \widehat D$.
Since $\widehat D$ is connected, consider a shortest path in $G[\widehat D]$ from $q_3$ to $D_3$.
Let $x_3$ be the last vertex of this path that belongs to $N(q_3)$, and let $d_3$ be the vertex immediately after $x_3$ on this path.
Then $x_3$ and $d_3$ are adjacent.
Furthermore, as $x_3\in\widehat D$, we have $x_3\notin N[P']\cup N[\widehat Q_3]$.
The suffix of the path from $d_3$ to $D_3$ avoids $N[P']\cup N[Q'_3]$, and hence $d_3$ belongs to the same connected component of $G-(N[P']\cup N[Q'_3])$ as $D_3$.
\renewcommand{\qedsymbol}{$\diamondsuit$}
\end{proof}

If $G$ admits $(9t+4)$-dominated guided separator, then we are done, 
so assume \cref{cl:q3} gave us $Q_3'$, $D_3$, $q_3$, $d_3$, and $x_3$. 
We take the tripod $\mathcal C\coloneqq v',Q'_1,Q'_2,Q'_3$.
Note that $N[C]$ is not a dominated guided separator of $G$ as (using \cref{obs:measure}~\hyperref[obs:measure:p4]{Point~2}): \[\mu(N[\mathcal C])\le |V(\mathcal{C})|=\cD<\mu_G.\]

We find $\ThetaZero$ by \cref{lem:extend_tripod_to_theta}.
We show that $\mathcal C$ form a promising partial $t$-long $3$-path configuration in $G$ (recall \cref{def:partial_tL3PC}) with $t\coloneqq t$.
We prove that $G-\mathrm{Shell}(\mathcal C)$ has a connected component containing $q_1,q_2,q_3$ (endpoints of $Q'_1,Q'_2,Q'_3$ opposite to $v'$, respectively).
We take the big component of  $G-\mathrm{Shell}(\mathcal C)$.
As $\mu(N[Q_1])=\cA> |V(\mathcal C)|$, $\BIG(G-\mathrm{Shell}(\mathcal C)$ intersects $N[Q_1]$.
We take a vertex in $\BIG(G-\mathrm{Shell}(\mathcal C))\cap N[Q_1]$ closest to $q_1$.
From the intersection we follow $P'$ to reach $q_1$.
An analogous argument and construction hold for $q_2$.
The fact that $\BIG(G-\mathrm{Shell}(\mathcal C))$ contains $q_3$ follows by \Cref{cl:q3} which provides connection to $D_3$ via vertices $x_3$ and $d_3$.
\end{proof}

As a short $t$-model of $\ThetaZero$ contains a $\Xi_{\geq t}$ (\cref{lem:xis-from-model}),
we have the following immediate corollary of \cref{lem:initial_theta}
applied to the guidance that points to the component containing more than half
of the weight of $G$.
\begin{corollary}\label{cor:xi-dbs}
  For every positive integer $t \geq 1$, the class of graphs without
  a $\Xi_{\geq t}$ admits an $\Oh(t)$-dominated balanced separator.
\end{corollary}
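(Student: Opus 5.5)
The plan is to derive the corollary directly from \cref{lem:initial_theta}, instantiated with the weight-based guidance. Fix a graph $G$ with no $\Xi_{\geq t}$ and a weight function $\wei$. For every induced subgraph $G'$ of $G$, the guidance returns as $\BIG(G')$ the unique component of $G'$ whose weight exceeds $\wei(V(G))/2$, if such a component exists, and $\emptyset$ otherwise. Uniqueness holds because two such components would together weigh more than $\wei(V(G))$. Monotonicity also holds: if $A\subseteq B$ and $\BIG(G[A])=D\neq\emptyset$, then $D$ is contained in some component $D'$ of $G[B]$ with $\wei(D')\geq\wei(D)>\wei(V(G))/2$, so $D'=\BIG(G[B])$. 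With this guidance, a dominated guided separator is exactly a set $X$ such that $N[X]$ is a balanced separator in $(G,\wei)$, as the paper already notes in \cref{sec:big}.

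Next I would reduce to connected $G$, since \cref{lem:initial_theta} is stated for connected graphs. If $G$ is disconnected, either no component has weight more than $\wei(V(G))/2$, in which case $X=\emptyset$ already works, or exactly one component $G_0$ does. In the latter case, apply the argument to $G_0$ while keeping the threshold $\wei(V(G))/2$ from the whole graph, so the guidance is still defined relative to $\wei(V(G))$ and monotonicity is unaffected. The resulting set $X\subseteq V(G_0)$ leaves no component of $G_0-N[X]$ above the threshold. Every other component of $G$ already has weight at most $\wei(V(G))/2$, so $X$ works for $G$. Alternatively, I could check that the proof of \cref{lem:initial_theta} never uses connectivity beyond working inside the big component.

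Now apply \cref{lem:initial_theta}. The first outcome is an $\Oh(t)$-dominated guided separator, which is what we want. The second outcome is a short $t$-model $(\ThetaZero,\eta,R)$ in $G$. Since $\ThetaZero$ is itself a theta, the trivial packing of one theta minor model in $\ThetaZero$, together with \cref{lem:xis-from-model}, yields a $\Xi_{\geq t}$ in $G[V(\mH)]$. This contradicts the assumption, so only the first outcome can occur.

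I do not expect a real obstacle. The only points needing care are checking that the half-weight guidance satisfies the monotonicity requirement of a guidance, and handling the connectivity hypothesis as described above. The constant hidden in $\Oh(t)$ is the one from \cref{lem:initial_theta}, namely $\max(8t+3,\,9t+4)=9t+4$.
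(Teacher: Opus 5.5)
Your proposal is correct and is essentially the paper's argument. You apply \cref{lem:initial_theta} with the guidance returning the component of weight more than $\wei(V(G))/2$, rule out the short $t$-model outcome via \cref{lem:xis-from-model}, and your explicit checks of monotonicity and of the reduction to a connected graph (which \cref{lem:initial_theta} assumes but the paper passes over) are sound details the paper leaves implicit.
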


\subsection{Proof of \cref{thm:EP-thetas}}\label{sec:thetaProof}

\EPTheta*

\begin{proof}
  For brevity, we shorten $\pack_{\Theta_t}$ to $\pack$.

  We claim that it suffices to prove the statement with an additional assumption
  that $G$ has no $\Xi_{\geq t}$.
  Indeed, let $G$ and $t$ be as in the statement.
  We invoke~\cref{thm:EP-3LPC-algo} for the graph $G$ and integer $t$,
  obtaining $k_\Xi$, a packing $\mathcal{C}_\Xi$ of $k_\Xi$ anti-adjacent copies of $\Xi_{\geq t}$,
  and a set $X_{\Xi}$ of size $\Oh(tk_{\Xi} \log(k_{\Xi}))$ such that $G-N[X_\Xi]$ has no $\Xi_{\geq t}$.
  As every $\Xi_{\geq t}$ contains a $\Theta_t$ induced minor, 
  the subgraph of $G$ induced by the vertices used in $\mathcal{C}_{\Xi}$
  contains $k_{\Xi}\Theta_t$ as an induced minor. Hence, $\pack(G) \geq k_{\Xi}$.

  The graph $G' \coloneqq G-N[X_\Xi]$ has no $\Xi_{\geq t}$, so 
  set $k' \coloneqq \pack(G')$ and assume that there is
  a set $X'$ of size $\Oh(tk' \log(k'))$ so that $G'-N[X']$ has
  no $\Theta_t$ as an induced minor. Then, $G-N[X_\Xi \cup X']$ has
  no $\Theta_t$ as an induced minor and 
  $\pack(G) \geq \max(k_\Xi,k')$.

  Hence, in the remainder of the proof we prove
  the statement of the theorem
  with an additional
  assumption that $G$ has no $\Xi_{\geq t}$.
  We proceed by induction on $\pack(G)$, proving a bound $|X| \leq \lambda kt \log(2k)$ for $k \coloneqq \pack(G)$, 
  where $\lambda > 0$ is the constant hidden in the big-$\Oh$ notation in \cref{lem:initial_theta} in the bound on the size of the dominated guided separator.
  The base case of no $\Theta_t$ as an induced minor in $G$ is trivial with $k = 0$ and $X = \emptyset$.

  For the inductive step, assume $\pack(G) \geq 1$.
  We invoke \cref{lem:initial_theta} on $G$ with a guidance defined to return the connected component $C$ of $G$ such that $\pack(C)> \frac{1}{2}\pack(G)$.
  By \cref{lem:xis-from-model}, $G$ cannot contain a short $t$-model of $\ThetaZero$ as this implies the existence of $\Xi_{\geq t}$ in $G$.
  Hence, $G$ admits a dominated guided separator $S$ of size at most $\lambda t$.

  Let $D_1,\ldots,D_p$ be the connected components of $G-N[S]$ that contain $\Theta_t$ as an induced minor.
  Since $S$ is a dominated guided separator, for every $1 \leq i \leq p$, 
  we have $\pack(G[D_i]) \leq \frac{1}{2}\pack(G) < \pack(G)$. 
  By the inductive hypothesis, for every $1 \leq i \leq p$,
  if we set $k_i \coloneqq \pack(G[D_i])$, 
  then $G[D_i]$ contains a set $X_i$ of size at most $\lambda tk_i \log(2k_i)$ 
  so that $G[D_i] \setminus N[X_i]$ has no $\Theta_t$ as an induced minor. 
  We have $k_i \geq 1$ as $G[D_i]$ contains $\Theta_t$ as an induced minor.
  
  Note that $k \coloneqq \pack(G) \geq \sum_{i=1}^p k_i$, as the components $D_i$ are pairwise anti-adjacent.

  We claim that
  \[ X \coloneqq S \cup \bigcup_{i=1}^p X_i \]
  satisfies the requirements. Clearly, $G-N[X]$ does not contain $\Theta_t$ as an induced minor. It remains
  to bound the size of $X$.

  Recall that for every $1 \leq i \leq p$, we have $k_i = \pack(G[D_i]) \leq \frac{1}{2} \pack(G) \leq k$,
  hence $\log(2k_i) \leq \log(2k)-1$. Thus,
  \begin{align*}
  |X| &= |S| + \sum_{i=1}^p |X_i| \leq \lambda t + \sum_{i=1}^p \lambda tk_i \log(2k_i) \\
  &\leq \lambda t + \lambda t \sum_{i=1}^p k_i \left(\log(2k)-1\right) \\
  &\leq \lambda t + \lambda t \log(2k) \sum_{i=1}^p k_i  - \lambda t \sum_{i=1}^p k_i \\
  &= \lambda tk \log(2k) + \lambda t \left(1 - \log(2k) \left(k - \sum_{i=1}^p k_i\right) - \sum_{i=1}^p k_i \right) \\
  &\leq \lambda tk \log(2k)
  \end{align*}
  The last inequality follows from $k \geq 1$ so $\log(2k) \geq 1$.
\end{proof}

	\section{From induced Erd\H{o}s--P\'osa property to dominated balanced separators}\label{sec:dbs}
	
In this section, we show that \cref{thm:dbs-final}
is a consequence of the results shown so far
and the following simple observation:
an induced \EP property allows us to reduce the problem of
finding balanced separators in $kH$-(induced-minor-)free graphs
to the same problem for $H$(induced-minor-)free graphs.

\begin{lemma}\label{lem:ep-to-dbs}
	Let $\cH$ be a family of graphs.
    Suppose that:
    \begin{enumerate}
        \item $\cH$-free graphs admit $d$-dominated balanced separators,
        \item there exists a function $f : \N \to \N$ such that for every $k \geq 1$,
        every graph $G$ either satisfies $\pack_\cH(G) \geq k$ or it has a set $X \subseteq V(G)$ of size at most $f(k)$ such that $G - N[X]$ is $\cH$-free.
    \end{enumerate}
    Then, for every $k \geq 1$, graphs with $\pack_\cH(G) < k$ admit $(d+f(k))$-dominated balanced separators.

    Furthermore, if the second property is algorithmic, i.e., there is an algorithm that runs in time $T(n,k,t)$ in $n$-vertex graphs and outputs either the desired set $X$ or $k$ pairwise disjoint and anti-adjacent members of $\cH$ as an induced subgraph of $G$,
    then in time $T(n,k,t) + n^{\Oh(d)}$ one can either find a required balanced separator,
    or $k$ pairwise disjoint and anti-adjacent members of $\cH$ as an induced subgraph of $G$.
\end{lemma}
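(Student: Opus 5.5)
The plan is a direct two-step reduction. Fix a weighted graph $(G,\wei)$ with $\pack_\cH(G) < k$. By the second assumption, there is a set $X \subseteq V(G)$ with $|X| \leq f(k)$ such that $G' \coloneqq G - N[X]$ is $\cH$-free. Since $\cH$-freeness is hereditary, every induced subgraph of $G$ avoiding $N[X]$ is $\cH$-free.

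Next we find a balanced separator for $G'$, but with respect to a restricted weight function so that the threshold is measured against $\wei(V(G))$ rather than $\wei(V(G'))$. Define $\wei'$ on $V(G')$ as the restriction of $\wei$, so $\wei'(V(G')) \leq \wei(V(G))$. By the first assumption there is $Y \subseteq V(G')$ with $|Y| \leq d$ such that every component of $G' - N_{G'}[Y]$ has $\wei'$-weight at most $\wei'(V(G'))/2 \leq \wei(V(G))/2$.

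Now take $Z \coloneqq X \cup Y$, so $|Z| \leq d + f(k)$. We have $G - N[Z] \subseteq G' - N_{G'}[Y]$, since $N_G[Y] \supseteq N_{G'}[Y]$ and $N[X]$ has been removed. Every component of $G - N[Z]$ is therefore contained in a component of $G' - N_{G'}[Y]$. Its $\wei$-weight is thus at most $\wei(V(G))/2$, so $N[Z]$ is a balanced separator.

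For the algorithmic part, first run the assumed algorithm in time $T(n,k,t)$. If it returns $k$ anti-adjacent members of $\cH$, output them. Otherwise we have $X$, and we brute-force over all sets $Y \subseteq V(G')$ of size at most $d$ in time $n^{\Oh(d)}$, checking component weights in polynomial time. Existence of such a $Y$ is guaranteed by the first assumption.

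The only delicate point is the weight normalisation: a separator balanced relative to $G'$ must also be balanced relative to $G$. Restricting $\wei$, rather than renormalising it, makes the inequality go the right way. Note also that the statement allows a separator that is at least as good as balanced, so no further work is needed there.
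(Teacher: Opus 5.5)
Your proof is correct and follows the paper's argument: take $X$ from the Erd\H{o}s--P\'osa property, apply the separator property to $G-N[X]$ to obtain $Y$, and use $N[X\cup Y]$. For the algorithmic part, run the assumed algorithm and then brute-force $Y$. Your explicit restriction of $\wei$ to $G-N[X]$, so that each remaining component weighs at most $\wei(V(G))/2$, spells out the step the paper calls straightforward.
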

\begin{proof}
    Let $G$ be a graph with $\pack_\cH(G) < k$.
    By the second assumption, there exists $X \subseteq V(G)$ of size at most $f(k)$ such that $G - N[X]$ is $\cH$-free. By the first assumption, there exists $Y \subseteq V(G) \setminus N[X]$ of size at most $d$ such that $N[Y]$ is a balanced separator in $G - N[X]$.
    It is straightforward to observe that $N[X \cup Y]$ is a balanced separator in $G$.
    As $|X \cup Y| \leq d + f(k)$, the proof is complete.

    To obtain the algorithmic version, we first run the algorithm for the second property; this takes time $T(n,k,t)$.
    If it returns the second outcome, we are done.
    Otherwise, it returns a set $X$ of size at most $f(k)$ such that $G - N[X]$ is $\cH$-free.
    Then we exhaustively look for a set $Y$ of size at most $d$ such that $N[Y]$ is a balanced separator in $G - N[X]$; this takes time $n^{\Oh(d)}$.
\end{proof}

\thmdbsfinal*
\begin{proof}
As every $\Xi_{\geq t}$ contains both $\Theta_t$ and $C_t$ as an induced minor,
the last bullet point implies the first and the second one.

Apply \cref{lem:ep-to-dbs} to $\cH$ being the class of all $\Xi_{\geq t}$s.
\cref{cor:xi-dbs}
provides the first requirement --- $\Oh(t)$-dominated balanced separators for graphs that exclude $\Xi_{\geq t}$.
\cref{thm:EP-3LPC-algo} provides the second requirement --- the induced \EP property.
As \cref{thm:EP-3LPC-algo} is algorithmic, we also obtain the algorithmic version of \cref{thm:dbs-final}.
\end{proof}

	\section{Long thetas in blob graphs}\label{sec:blobs}
	In this section we discuss how to derive \cref{thm:thetas-algo-cmso} from \cref{thm:thetas-algos}.

Let $G=(V,E)$ be a graph.
A \emph{blob graph} $G^\circ$ of $G$ is a graph defined as follows:
\begin{align*}
    V(G^\circ) & = \{X \subseteq V(G) ~\mid~ G[X] \text{ is connected}\} \\
    E(G^\circ) & = \{ XY \mid X,Y \in V(G^\circ), X \neq Y, \text{ and $X$ and $Y$ touch each other} \}.
\end{align*}

We say that a graph $H$ has property ($\star$) if, for every graph $G$, the following are equivalent:
    \begin{itemize}
        \item $G$ is $H$-induced-minor-free,
        \item $G^\circ$ is $H$-induced-minor-free.
    \end{itemize}

\begin{lemma}\label{lem:theta-blobclosed}
    For every $t \geq 1$, the graph $\Theta_{t}$ has property ($\star$).
\end{lemma}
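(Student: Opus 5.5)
The plan is to prove the two implications separately. The direction ``$G^\circ$ is $\Theta_t$-induced-minor-free $\Rightarrow$ $G$ is $\Theta_t$-induced-minor-free'' is immediate. The singletons $\{v\}$, $v \in V(G)$, induce in $G^\circ$ a copy of $G$, since two singletons touch exactly when the corresponding vertices are adjacent. So $G$ is an induced subgraph of $G^\circ$, and induced minors of $G$ are induced minors of $G^\circ$. All the work is in the converse: from an induced minor model of $\Theta_t$ in $G^\circ$ we build one in $G$.

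We start from a model in $G^\circ$ and apply \cref{lem:minor-models-degree-2}. This lets us assume that every degree-$2$ vertex of $\Theta_t$ is modelled by a single blob. We use the notation of the proof of \cref{lem:theta-xi}: the degree-$3$ vertices are $y,z$, and the three paths are $a_1,\ldots,a_t$, $b_1,\ldots,b_t$ and $c_1,\ldots,c_t$. We get blobs $A_i,B_i,C_i\subseteq V(G)$ and families $\mathcal{X}_y,\mathcal{X}_z$ of blobs. Let $Y=\bigcup\mathcal{X}_y$ and $Z=\bigcup\mathcal{X}_z$; these are connected in $G$. The non-adjacencies of $\Theta_t$ translate into non-touching statements in $G$. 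Consecutive blobs on a path touch, non-consecutive ones do not, blobs of different paths do not touch, and $Y$ touches only $A_1,B_1,C_1$ among the path blobs (symmetrically for $Z$). Finally, $Y$ and $Z$ do not touch.

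For each of the three paths, say the $a$-path, we take a shortest induced path $Q_a$ in $G[A_1\cup\cdots\cup A_t]$ from a vertex of $N(Y)$ to a vertex of $N(Z)$. Since only consecutive blobs touch, $Q_a$ has to pass through every $A_i$ in order, so it has at least $t$ vertices. By minimality, only its first vertex is adjacent to $Y$ and only its last vertex is adjacent to $Z$. The three paths $Q_a,Q_b,Q_c$ are pairwise disjoint and anti-adjacent because their blobs do not touch. Contracting $Y$ and $Z$ to single vertices and each $Q_\bullet$ to a path with exactly $t$ vertices then gives an induced minor model of $\Theta_t$ in $G$.

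The main obstacle is vertex-disjointness. Blobs that touch may actually intersect, so $A_1$ may share vertices with $Y$ (and $A_t$ with $Z$). In the worst case $A_1\subseteq Y$, and then the naive path loses length. The first fix I would try is to shrink $Y$ before choosing the paths. I would pick three vertices witnessing that $Y$ touches $A_1$, $B_1$ and $C_1$, and replace $Y$ by a minimal connected subset containing them, which is a tripod by \cref{lem:make-tripod}. I would then argue that the vertices of $A_1$ lying in this tripod can be charged to the tripod rather than to the path. If the length still falls one short, the fallback is to redefine the parts so that $A_1\setminus Y$ plays the role of the first path blob, and to use the fact that $A_2$ is disjoint from $Y$ (they do not touch) to recover at least $t$ path vertices. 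This requires a careful case analysis on how $A_1$ meets $Y$.
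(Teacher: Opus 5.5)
\textbf{The gap.} Your construction is the right one, but the argument has a hole at exactly the point you call the main obstacle. You never prove that $Q_a$ avoids $Y\cup Z$, and the repairs you sketch (shrinking $Y$ to a tripod, a case analysis on how $A_1$ meets $Y$) are not carried out. As written, the proof is incomplete.

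\textbf{How to close it.} No repair is needed: the minimality you already invoke settles it.
\begin{itemize}
\item \emph{Disjointness.} Write $Q_a=u_1\ldots u_r$. Since $Y$ and $Z$ do not touch, $N(Z)\cap Y=\emptyset$, so $u_r\notin Y$. If some $u_j$ lies in $Y$, take the largest such $j$. Then $u_{j+1}\in N(Y)$, and $u_{j+1}\ldots u_r$ is a shorter admissible path, a contradiction. Symmetrically, $Q_a\cap Z=\emptyset$. Your statement that only $u_1$ has a neighbour in $Y$ (and only $u_r$ one in $Z$) then follows by the same argument.
\item \emph{Existence of $Q_a$.} Your worst case $A_1\subseteq Y$ cannot happen: for $t\geq 2$ the blob $A_2$ would touch $Y$, and for $t=1$ the set $Y$ would touch $Z$. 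Because $A_1\not\subseteq Y$, $A_1$ is connected and $A_1$ touches $Y$, the set $N(Y)$ meets $A_1\cup\cdots\cup A_t$. The same holds for $N(Z)$, so $Q_a$ exists.
\item \emph{Length.} State the bound so that it survives overlapping blobs. The vertex $u_1$ lies in no $A_i$ with $i\geq 2$, and $u_r$ lies in no $A_i$ with $i\leq t-1$, since such a blob would touch $Y$, resp.\ $Z$. Since $A_i$ and $A_j$ do not touch for $|i-j|\geq 2$, the largest index of a blob containing the current vertex increases by at most one per step along $Q_a$. Hence $r\geq t$.
\end{itemize}

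\textbf{Comparison with the paper.} With these lines added, your proof is complete and differs slightly from the paper's. The paper replaces $X_{a_1}$ by a component $X'_{a_1}$ of $X_{a_1}\setminus Y$ touching $Y$ and $X_{a_2}$, and $X_{a_t}$ by a component of $X_{a_t}\setminus Z$. Then $Y,X'_{a_1},X_{a_2},\ldots,X'_{a_t},Z$ is an induced path in $G^\circ$, and the paper applies \cref{clm:pathinblob} of Gartland et al.\ to a shortest $Y$--$Z$ path. Your shortest $N(Y)$--$N(Z)$ path inside the path blobs does this trimming implicitly and needs no external claim. It also sidesteps a small point in the paper's version: for $t=2$, the two trimmed components must be chosen so that they touch each other, and choosing them independently does not guarantee this.
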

\begin{proof}
    We will show that $G$ contains an induced minor model of $\Theta_{t}$ if and only if $G^\circ$ contains an induced minor model of $\Theta_{t}$.
    The forward direction is straightforward, as $G$ is an induced subgraph of $G^\circ$ (where the vertices of $G$ correspond to singleton subsets in $G^\circ$).

    For the reverse direction, let $G=(V,E)$ be a graph such that $G^\circ$ contains an induced minor model of $\Theta_{t}$.
    Denote the degree-3 vertices of $\Theta_{t}$ as $y$ and $z$,
    and the degree-2 vertices of each of the three paths by $a_1,\ldots,a_{t}$, $b_1,\ldots,b_{t}$, and $c_1,\ldots,c_{t}$, respectively, so that $a_1,b_1,c_1$ are neighbors of $y$, and $a_{t},b_{t},c_{t}$ are neighbors of $z$.
    Let $\cX = \{ X_v \}_{v \in V(\Theta_{t})}$ be an induced minor model of $\Theta_{t}$ in $G^\circ$.

    By \cref{lem:minor-models-degree-2}, we can assume that $|X_{a_i}| = |X_{b_i}| = |X_{c_i}| = 1$ for every $i \in [t]$.
    Let $Y=\bigcup X_y \subseteq V$ and $Z=\bigcup X_z \subseteq V$.
    Note that both $Y$ and $Z$ are connected in $G$.
    
    We aim to apply the following claim.

    \begin{claim}[{\cite[Claim 13]{DBLP:conf/stoc/GartlandLPPR21}}]\label{clm:pathinblob}
        Let $P^\circ=X_1,\ldots,X_t$ be an induced path in $G^\circ$ such that $X_1 \not\subseteq X_2$.
        Let $X'_1 \subseteq X_1 \setminus X_2$ and $X'_t \subseteq X_t$ be nonempty sets.
        Let $P = v_1, v_2, \ldots , v_p$ be a shortest path in $G[\bigcup_{j=1}^t X_j]$
        such that $v_1 \in X'_1$ and $v_p \in X'_t$.
        Then $P$ is induced, $p > t$, and $\{v_2, v_3, \ldots , v_{p-1}\} \cap (X'_1 \cup X'_t) = \emptyset$.
    \end{claim}

    Suppose $t \geq 2$.
    Let $X_{a_1}'$ be a component of $X_{a_1} \setminus Y$ that has a neighbor in $Y$ and in $X_{a_2}$.
    To see that such a component exists, note that $Y \cup X_{a_1} \cup X_{a_2}$ is connected, so there is a path from $Y$ to $X_{a_2}$ contained in this set, and sets $Y$ and $X_{a_2}$ do not touch each other.
    Similarly, let $X_{a_t}'$ be a component of $X_{a_t} \setminus Z$ that has a neighbor in $Z$ and in $X_{a_{t-1}}$. 
    If $t=1$, we set $X_{a_1}'$ be a component of $X_{a_1} \setminus (Y \cup Z)$ that has a neighbor in $Y$ and in $Z$; its existence can be shown analogously.

    Thus, $P_a^\circ = Y, X_{a_1}', X_{a_2},\ldots, X_{a_{t-1}}, X_{a_t}', Z$ is an induced path in $G^\circ$.
    (If $t=1$, this path reduces to $P_a^\circ = Y, X_{a_1}', Z$.)
    Clearly $Y \not\subseteq X_{a_1}'$ as, for example, $Y$ touches $X_{b_1}$ but $X_{a_1}'$ does not.
    Thus, by \cref{clm:pathinblob} there is an induced path $P_a$ in $G$ from a vertex of $Y$ to a vertex of $Z$ with at least $t$ internal vertices, and all internal vertices of $P_a$ are in $\bigcup_{i=1}^t X_{a_i}$.
    Let $P_b$ and $P_c$ be defined analogously for the paths corresponding to $b_1,\ldots,b_t$ and $c_1,\ldots,c_t$, respectively.
    Note that the interiors of these three paths are pairwise non-touching.

    Contracting $Y$ and $Z$ to single vertices, and taking all internal vertices of $P_a$, $P_b$, and $P_c$ gives an induced minor model of $\Theta_{\geq t}$ in $G$.
    As $\Theta_{t}$ is an induced minor of $\Theta_{\geq t}$, we conclude that $G$ contains an induced minor model of $\Theta_{t}$.
\end{proof}

The next lemma comes from~\cite[Theorem 13]{swatpaper}. We note that the referenced result is stated for the case where $H$ is a disjoint union of long cycles, but the proof only requires that long cycles have property ($\star$).
Thus, it yields the following more general statement. The idea of the proof comes from~\cite{DBLP:conf/mfcs/PaesaniPR21}.

\begin{lemma}[{Bonnet, Czyżewska, Masa\v{r}\'ik, Pilipczuk, Rzążewski~\cite[Theorem 13]{swatpaper}}]\label{lem:blobclosed}
    If every component of $H$ has property ($\star$), then $H$ has property ($\star$).
\end{lemma}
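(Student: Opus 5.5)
The plan is to verify the two directions of ($\star$) for $H = H_1 \uplus \dots \uplus H_m$, where each component $H_j$ has property ($\star$). The forward direction is immediate, since $G$ is an induced subgraph of $G^\circ$ (singletons), so any induced minor of $G$ is one of $G^\circ$. For the converse we assume $G^\circ$ contains $H$ as an induced minor, and we aim to show that $G$ does too.

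Fix an induced minor model of $H$ in $G^\circ$. Group its branch sets according to the components: let $\mathcal{M}_j$ be the union of the branch sets of the vertices of $H_j$, a connected set of vertices of $G^\circ$. For each $j$ set $U_j \coloneqq \bigcup_{X \in \mathcal{M}_j} X \subseteq V(G)$. The key point is that blobs in $\mathcal{M}_i$ and $\mathcal{M}_j$ are nonadjacent in $G^\circ$ for $i \neq j$. This means every blob of the first is disjoint from and anti-adjacent to every blob of the second, so $U_i$ and $U_j$ do not touch in $G$. Hence the sets $U_j$ are pairwise disjoint and anti-adjacent.

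Next we observe that $(G[U_j])^\circ$ contains $H_j$ as an induced minor. Every blob in $\mathcal{M}_j$ is a connected subset of $U_j$, hence a vertex of $(G[U_j])^\circ$. Adjacency between such blobs, namely touching, is the same whether computed in $G$ or in $G[U_j]$. So $(G[U_j])^\circ$ contains the induced subgraph of $G^\circ$ on the blobs of $\mathcal{M}_j$, and hence it contains $H_j$ as an induced minor. Applying property ($\star$) for $H_j$ to the graph $G[U_j]$ yields an induced minor model of $H_j$ inside $G[U_j]$.

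Finally, we take the union of these models over all $j$. Since the $U_j$ are pairwise non-touching, the branch sets coming from different components neither intersect nor are adjacent. The union is therefore an induced minor model of $H$ in $G$, which proves the reverse implication.

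The only delicate step is the claim that $U_i$ and $U_j$ do not touch. This needs nonadjacency in $G^\circ$ to imply genuine disjointness and anti-adjacency of the blobs, which holds by the definition of $E(G^\circ)$ given above: distinct blobs that touch are adjacent. Blobs are distinct across components because branch sets are disjoint; a blob equal to another would be the same vertex of $G^\circ$, which is excluded.
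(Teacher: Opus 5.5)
Your proof is correct. Branch sets of vertices in different components of $H$ are disjoint and non-adjacent in $G^\circ$, so the blobs they contain pairwise do not touch, which makes the unions $U_j$ pairwise non-touching in $G$; applying ($\star$) for $H_j$ to $G[U_j]$, whose blob graph contains $G^\circ[\mathcal{M}_j]$ as an induced subgraph, then gives the required model of $H$. The paper does not prove this lemma itself but defers to Theorem~13 of Bonnet et al., and your argument appears to be the same component-by-component reasoning behind that cited result.
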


Combining \cref{lem:theta-blobclosed} with \cref{lem:blobclosed}, we obtain the following corollary.
\begin{corollary}\label{cor:thetas-blobclosed}
    For every $k,t \geq 1$, the graph $k\Theta_{t}$ has property ($\star$).
\end{corollary}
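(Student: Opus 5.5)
The plan is to obtain \cref{cor:thetas-blobclosed} by composing the two lemmata that immediately precede it. First I would fix $k,t\geq 1$ and observe that $k\Theta_t$ is, by definition, the disjoint union of $k$ copies of $\Theta_t$. Its connected components are therefore exactly $k$ copies of $\Theta_t$, since $\Theta_t$ is connected: it consists of two branch vertices joined by three paths.

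Next, \cref{lem:theta-blobclosed} says that $\Theta_t$ has property ($\star$) for every $t\geq 1$, so each component of $k\Theta_t$ has property ($\star$). Applying \cref{lem:blobclosed} with $H \coloneqq k\Theta_t$ then gives that $k\Theta_t$ has property ($\star$), which is the claim.

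I do not expect a genuine obstacle, since all the work is done by \cref{lem:theta-blobclosed} and the cited \cref{lem:blobclosed}. The one point to double-check is whether \cref{lem:blobclosed} is meant for any graph whose components all have property ($\star$), rather than only for disjoint unions of cycles. The remark before it says its proof uses only property ($\star$) of the components, so this instantiation is valid.

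For intuition, or to give a self-contained argument if needed, the nontrivial direction can be seen as follows. Suppose $G^\circ$ contains a model of $k\Theta_t$. Each component's sub-model is a model of $\Theta_t$ in $G^\circ$, and \cref{lem:theta-blobclosed} turns it into a model of $\Theta_t$ in $G$ using only vertices in the union of the blobs involved. The care needed is to keep these $k$ models pairwise anti-adjacent; one would argue this via the non-touching of blobs from different components, in the spirit of \cite{DBLP:conf/mfcs/PaesaniPR21}.
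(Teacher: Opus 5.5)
Your proposal is correct and is the paper's argument: every component of $k\Theta_t$ is a copy of $\Theta_t$, which has property ($\star$) by \cref{lem:theta-blobclosed}, and \cref{lem:blobclosed} then gives property ($\star$) for $k\Theta_t$. Your optional self-contained sketch is also sound, because blobs belonging to different components of the model in $G^\circ$ are distinct and non-adjacent there, so they do not touch in $G$.
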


The next result is also shown for classes excluding disjoint unions of long cycles in~\cite{swatpaper},
but the only requirements needed are the ones listed in the statement.
The original idea comes from the work of Gartland, Lokshtanov, Pilipczuk, Pilipczuk, and Rzążewski~\cite{DBLP:conf/stoc/GartlandLPPR21}.

We say that a \textsf{CMSO}$_2$ formula $\psi$ is \emph{hereditary} if:
($i$) if $G \models \psi$, then $G' \models \psi$ for every induced subgraph $G'$ of $G$, and
($ii$) if $G_1 \models \psi$ and $G_2 \models \psi$, then $G_1 + G_2 \models \psi$, where $G_1 + G_2$ is the disjoint union of $G_1$ and $G_2$.
We note that many natural graph properties, like e.g., planarity, bounded degeneracy, or excluding a fixed graph as a minor, can be defined by hereditary \textsf{CMSO}$_2$ formulae.

\begin{theorem}[Bonnet, Czyżewska, Masa\v{r}\'ik, Pilipczuk, Rzążewski~\cite{swatpaper}]\label{thm:alg-blobclosed}
Let $H$ be a graph such that 
\begin{enumerate}
    \item For every $\epsilon >0$ there is a (quasi)polynomial-time algorithm that,
    given an $n$-vertex weighted graph, returns either $H$ as an induced minor, or an independent set of weight at least $(1-\epsilon)$ times optimum.
    \item $H$ has property ($\star$).
\end{enumerate}
Let $r \geq 0$, $\varepsilon \in (0,1)$ be a real, and $\psi$ be a hereditary \textsf{CMSO}$_2$ formula.
There is an algorithm that, given a graph $G$, in (quasi)polynomial time returns one of the following outputs:
\begin{itemize}
\item an induced minor model of $H$ in $G$, or
\item a solution to $(\tw \leq r,\psi)$-\MWIS of size at least $(1-\varepsilon)$ times the optimum one, or,
\item a correct conclusion that no solution to $(\tw \leq r,\psi)$-\MWIS exists.
\end{itemize}
\end{theorem}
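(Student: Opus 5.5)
The statement to prove is \cref{thm:alg-blobclosed}, a blob-graph reduction. I would follow the Gartland--Lokshtanov--Pilipczuk--Pilipczuk--Rzążewski approach: reduce $(\tw \leq r,\psi)$-\MWIS on $G$ to (approximate) \MWIS on an auxiliary graph built from connected sets of $G$. Hypothesis~1 solves that \MWIS instance, and property~($\star$) transfers any $H$ induced minor found back to $G$.

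\textbf{Reduction step.} A solution $F\subseteq V(G)$ with $\tw(G[F])\leq r$ and $G[F]\models\psi$ has components $F_1,\dots,F_m$. These are connected, pairwise non-touching, each has treewidth at most $r$, and each satisfies $\psi$ because $\psi$ is hereditary. Conversely, by the disjoint-union closure of $\psi$, any family of pairwise non-touching connected sets, each inducing a graph of treewidth at most $r$ that satisfies $\psi$, unions to a feasible solution.

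\textbf{Weights.} Give each vertex $X$ of $G^\circ$ weight $|X|$ if $G[X]$ has treewidth at most $r$ and satisfies $\psi$, and weight $0$ otherwise. Feasible solutions of $G$ then correspond, weight-preservingly, to independent sets in $G^\circ$ restricted to positive-weight vertices. In particular, the optimum \MWIS value of $G^\circ$ equals the optimum of the original problem; if that optimum is $0$, there is no solution, or only the empty one.

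\textbf{Main obstacle: size of $G^\circ$.} $G^\circ$ is exponential, so it cannot be built explicitly. Here the unweighted setting and the $(1-\varepsilon)$ slack are what I would use. I would restrict to a polynomial (or quasipolynomial) family $\mathcal{F}$ of candidate blobs. Each component $F_i$ of treewidth at most $r$ can be cut, by deleting an $\varepsilon$-fraction of its vertices using balanced separators of bounded-treewidth graphs, into connected pieces of size $\Oh_{r,\varepsilon}(\log n)$ or even constant. Hereditariness keeps every piece feasible, and we lose only $\varepsilon\cdot|F|$. So I would take $\mathcal{F}$ to be all connected sets of size at most $c(r,\varepsilon)$ satisfying the conditions, which is polynomial to enumerate since $\psi$ is decidable on bounded-size graphs. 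The graph $G^\circ[\mathcal{F}]$ is an induced subgraph of $G^\circ$. I expect this step to need care: the pieces must stay pairwise non-touching, which means deleting a neighborhood-closed set (or using a Baker-style layered shifting) rather than just separators. I would follow the argument of \cite{DBLP:conf/stoc/GartlandLPPR21} for this.

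\textbf{Running the hypothesis and transferring back.} Run the algorithm from hypothesis~1 on $G^\circ[\mathcal{F}]$ with parameter $\varepsilon/2$. If it returns an independent set, output its union; by the two losses this is at least $(1-\varepsilon)\mathrm{OPT}$. If it returns an induced minor model of $H$ in $G^\circ[\mathcal{F}]$, this is also a model in $G^\circ$. Property~($\star$) then gives $H$ as an induced minor of $G$. To make this algorithmic I would use the constructive proofs (as in \cref{clm:pathinblob} and \cref{lem:blobclosed}), which turn a blob model into a model in $G$ in polynomial time. The running time is (quasi)polynomial because $|\mathcal{F}|$ is polynomial in $n$.
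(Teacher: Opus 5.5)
Your proposal is correct and follows the blob-graph argument that the paper takes from \cite{swatpaper}, which goes back to \cite{DBLP:conf/stoc/GartlandLPPR21}. You cut an optimal bounded-treewidth solution into connected pieces of size $\Oh(r/\varepsilon)$ at an $\varepsilon$-loss, run the assumed \MWIS approximation on the polynomial-size induced subgraph of $G^\circ$ formed by small feasible blobs weighted by their size, and transfer an $H$ found there back to $G$ through ($\star$).

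The step you flagged needs no extra care. Since $G[F]$ is an induced subgraph of $G$, distinct components of $G[F\setminus S]$ are automatically pairwise anti-adjacent in $G$, so plain separators suffice. Deleting a neighborhood-closed set instead would in fact lose too much, because bounded-treewidth graphs such as stars have unbounded degree. Also, as you noted, returning a model in $G$ rather than one in $G^\circ$ needs an effective version of ($\star$). That holds for $k\Theta_t$ by the constructive proofs you cite, but it is not part of ($\star$) as defined.
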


Thus, we obtain \cref{thm:thetas-algo-cmso} as a consequence of \cref{thm:thetas-algos}, \cref{cor:thetas-blobclosed}, and \cref{thm:alg-blobclosed}.

\thmcmso*

	\section{Conclusion and further work}\label{sec:outro}
	Let us conclude the paper by pointing out some open questions and directions for future research.

First of all, let us recall three conjectures concerning graphs that exclude a planar graph as an induced minor, which were the motivation for our work.

\conjEP*

\conjdbs*

\conjalg*

We are very far from confirming these conjectures.
Perhaps an interesting next case would be an analogue of a long $\Theta_{\geq t}$, but with more than three long paths? 

\medskip

Second, recall that in all our induced \EP results, the bound on the size of the hitting set is $\mathcal{O}(tk \log k)$, where $k$ is the number of objects to be packed and $t$ corresponds to the size of a single object.
On the other hand, the lower bound obtained by a modification of a construction in~\cite{DBLP:conf/soda/AhnGHK25} is $\Omega(tk + k \log k)$. We believe it would be interesting to close this gap.
Let us remark that Ahn, Gollin, Huynh, and Kwon~\cite{DBLP:conf/soda/AhnGHK25} conjectured that in their setting, i.e., for induced packing of long, but not necessarily induced cycles, the lower bound is tight, i.e., that the upper bound can be improved to $\mathcal{O}(tk + k \log k)$.

\medskip

Another possible direction is to consider induced \EP property just for \emph{some} cycles.
Given a graph $G$ and a set $S \subseteq V(G)$, an \emph{$S$-cycle} is a cycle that intersects $S$.
As announced by Ahn and Kwon~\cite{EP-S-cycles}, the techniques used to prove \cref{thm:coarseEP} can be extended to show the induced \EP property for long (but not necessarily induced) $S$-cycles.
Our approach does not seem to generalize to such a setting.
Of course, in an analogous way one can define $S$-thetas etc.

\medskip
Finally, let us recall that we proved \emph{algorithmic} induced \EP property for long cycles and for $\Xi_{\geq t}$, but only an existential statement for $\Theta_{\geq t}$.
A fundamental reason for this is that we do not know how to efficiently find $\Theta_{\geq t}$ as an induced minor.
Recently, Dallard, Dumas, Hilaire, Milani\v{c}, Perez, and Trotignon~\cite{DBLP:conf/iwoca/DallardDHMPT24} showed that this problem is polynomial-time solvable for $t=1$, but the general case remains open.

It appears that the difficulty lies in detecting minor models that resemble the one in \cref{fig:turtle}, i.e., the image of each of degree-3 vertices is a path.
A naive approach would be to guess the images of degree-2 vertices, and then try to connect the endpoints by finding two disjoint and anti-adjacent paths between specified terminals.
However, the latter problem is \NP-hard even in restricted classes of graphs~\cite{DBLP:conf/icalp/AboulkerBPT25}.
Thus, a polynomial-time algorithm for detecting $\Theta_{\geq t}$ as an induced minor would require a more involved approach.
We also emphasize that there are some constant-size graphs $H$ (even trees) such that it is \NP-hard to detect $H$ as an induced minor~\cite{DBLP:conf/icalp/AboulkerBPT25,DBLP:conf/soda/KorhonenL24}.

	\bibliographystyle{alphaurl}
	\bibliography{bibliography}
	
	\onlysoda{%
		\appendix
		\section{Models: postponed proofs}\label{sec:models-later}
		\lemmodeladdear*

\leminitcycle*

\lemshortmodelpreserved*
}
\end{document}